\documentclass[reqno,12pt]{amsart} 

\usepackage{amssymb}



\usepackage{bookmark}
\usepackage[left=1.5in, right=1.5in, bottom=1.5in]{geometry}
\setlength{\parskip}{.5em}

\newtheorem{theorem}{Theorem}[section]
\newtheorem{lemma}[theorem]{Lemma}
\newtheorem{proposition}[theorem]{Proposition}
\newtheorem{corollary}[theorem]{Corollary}
\theoremstyle{definition}

\theoremstyle{remark}
\newtheorem{remark}[theorem]{Remark}

\numberwithin{equation}{section}

\newcommand{\norm}[1]{\lVert#1\rVert}

\newcommand{\Ag}[1]{\langle#1\rangle}

\newcommand{\cL}{\mathcal{L}}

\newcommand{\R}{\mathbb{R}}

\newcommand{\Z}{\mathbb{Z}}
\newcommand{\C}{\mathbb{C}}
\newcommand{\T}{\mathbb{T}}

\newcommand{\e}{\varepsilon}

\newcommand{\txt}[1]{\text{#1}}

\begin{document}

\title[]{First-order expansions for eigenvalues and eigenfunctions in periodic homogenization}


\author{Jinping Zhuge}

\address{Department of Math, University of Kentucky, Lexington, KY, 40506, USA.}
\email{jinping.zhuge@uky.edu}
\thanks{The author is supported in part by National Science Foundation grant DMS-1600520.}

\subjclass[2010]{35B27, 35P20}

\begin{abstract}
	For a family of elliptic operators with periodically oscillating coefficients, $-\text{div}( A(\cdot/\e) \nabla) $ with tiny $\e>0$, we comprehensively study the first-order expansions of eigenvalues and eigenfunctions (eigenspaces) for both Dirichlet and Neumann problems in bounded, smooth and strictly convex domains (or more general domains of finite type). A new first-order correction term is introduced to derive the expansion of eigenfunctions in $L^2$ or $H^1_{\txt{loc}}$. Our results rely on the recent progress on the homogenization of boundary layer problems.
\end{abstract}

\maketitle
\pagestyle{plain}
\section{Introduction}
This paper concerns with the first-order expansions of eigenvalues and eigenfunctions (eigenspaces) for a family of elliptic operators with rapidly oscillating coefficients. Precisely, we consider
\begin{equation}\label{eq.Le}
\cL_\e = -\txt{div} (A(x/\e) \nabla) = - \frac{\partial}{\partial x_i} \bigg\{ a^{\alpha\beta}_{ij} \Big( \frac{x}{\e}\Big) \frac{\partial}{\partial x_j}\bigg\},
\end{equation}
(Einstein's summation convention will be used throughout) where $1\le i,j\le d, 1\le \alpha,\beta \le m$ with dimension $ d\ge 3$. The coefficient matrix $A = (a^{\alpha\beta}_{ij})$ satisfies the following {\em standard assumptions}:
\begin{itemize}
	\item Ellipticity: there exists $\Lambda>0$ such that
	\begin{equation}
	\Lambda^{-1} |\xi|^2 \le a^{\alpha\beta}_{ij} \xi_i^\alpha \xi_j^\beta \le \Lambda |\xi|^2, \quad \txt{for any } \xi = (\xi_i^\alpha) \in \R^{m\times d}.
	\end{equation}
	\item Periodicity: $A(y+z) = A(y),$ for any $z \in \Z^d $ and $y\in \R^d.$
	\item Regularity: $a^{\alpha\beta}_{ij} \in C^\infty(\R^d)$.
	\item Symmetry: $A^* = A$ (i.e., $a^{\alpha\beta}_{ij} = a^{\beta\alpha}_{ji}$).
\end{itemize}
Throughout this paper, we assume that the domain $\Omega$ is bounded and smooth. We refer to a recent excellent book \cite{ShenBook} for general theory of periodic homogenization.

The asymptotic analysis of the eigenvalues and eigenfunctions is an important and interesting problem with many applications including the homogenization of heat equations and wave equations (low-frequency vibration) in composite materials with periodic microstructure; see \cite{Os75,Kesavan1,Kesavan2,SV93,AC98,Prange13,KS13}.  To describe our main concern of this paper, we concentrate on the Dirichlet problem. Let $\{ \lambda_{\e,k} \}_{k\ge 1}$ denote the sequence of Dirichlet eigenvalues in an increasing order for $\cL_\e$ in $\Omega$ and let $ \phi_{\e,k}$ be the normalized Dirichlet eigenfunction corresponding to $\lambda_{\e,k}$, i.e., $\phi_{\e,k}\in H_0^1(\Omega;\R^m), \norm{\phi_{\e,k}}_{L^2(\Omega)} = 1$ and $\cL_\e \phi_{\e,k} = \lambda_{\e,k} \phi_{\e,k}$. Let $\{ \lambda_{0,k} \}_{k\ge 1}$ denote the sequence of Dirichlet eigenvalues in an increasing order for the homogenized operator $\cL_0$ in $\Omega$ and let $\{ \phi_{0,k} \}_{k\ge 1}$ be the corresponding normalized Dirichlet eigenfunctions for $\cL_0$.

It is well-known that for each $k\ge 1$ (see, e.g., \cite[Chapter 6.2]{ShenBook}),
\begin{equation}\label{cdn_lambda}
|\lambda_{\e,k} - \lambda_{0,k}| \le C_k \e.
\end{equation}
This is exactly the zero-order expansion for the Dirichlet eigenvalues $\lambda_{\e,k}$. However, the first-order expansion of eigenvalues is a more difficult problem as the higher-order rate of convergence in homogenization theory essentially involves PDEs with oscillating boundary data and the geometry of domains. In \cite{SV93} and \cite{MV97}, Vogelius and his collaborators attempted to study the asymptotic behavior of $(\lambda_{k,\e} - \lambda_{0,k})/\e$ as $\e \to 0$, and they showed that if $\Omega$ is a classical convex polygon with all sides having rational normal vectors, then the limit of $(\lambda_{k,\e} - \lambda_{0,k})/\e$ is not just one point, but rather a continuum of accumulation points. The lack of uniqueness of the limit is caused by the non-homogenization of the boundary layer problems (see (\ref{eq_Due})) in such domains. The homogenization of boundary layer problem was a longstanding open problem, and significant progress have been made recently in a series of papers \cite{GM11,GM12,AKMP16,ShenZhuge16,ShenZhuge17,Zhuge17}. The breakthrough was due to G\'{e}rard-Varet and Masmoudi's striking work \cite{GM12,GM11}, in which they showed that the Dirichlet boundary layer problem homogenizes with an explicit rate of convergence, provided additionally that $\Omega$ is a smooth, strictly convex domain or a convex polygon whose normal vectors satisfying a Diophantine condition (also referred as small divisor condition). Following by their work, Prange studied the first-order expansion of the Dirichlet eigenvalues in \cite{Prange13} for both strictly convex smooth domains and convex polygons with Diophantine normals.

To describe the main result of \cite{Prange13}, we let $\lambda_0 = \lambda_{0,L} = \lambda_{0,L+1} = \cdots =\lambda_{0, L+M-1}$ be a Dirichlet eigenvalue of $\cL_0$ with multiplicity $M\ge 1$ and let $\lambda_{\e,L+j}, 0\le j \le M-1$, be the Dirichlet eigenvalues of $\cL_\e$ that converge to $\lambda_0$. In \cite{Prange13}, Prange proved that if $\Omega\subset \R^2$ is bounded, smooth and strictly convex, then there exists some fixed constant $\theta$ such that
\begin{equation}\label{est.Prange}
\bigg| \bigg( \frac{1}{M} \sum_{j=0}^{M-1} \frac{1}{\lambda_{\e,L+j}} \bigg)^{-1} - \lambda_0 - \e \theta \bigg| \le C \e^{\frac{12}{11}-}.
\end{equation}
Here and after, we write $a\le C\e^{b-}$ to indicate that $a\le C_{\sigma}\e^{b-\sigma}$ for any $\sigma \in (0,b)$ with $C_\sigma$ depending on $\sigma$. Note that the first term of (\ref{est.Prange}) is the harmonic average of $\{ \lambda_{\e,L+j}: 0\le j\le M-1 \}$. 
The exponent $\frac{12}{11}$ comes from the rate of convergence for 2-dimensional boundary layer problem obtained in \cite{GM12}. More recently, G\'{e}rard-Varet and Masmoudi's result was improved to an almost optimal rate of convergence \cite{AKMP16,ShenZhuge16} and generalized to Neumann problem \cite{ShenZhuge16} and other type of domains \cite{Zhuge17}. As a consequence, we can easily extend Prange's result to higher dimensions ($d\ge 3$) as follows.
\begin{theorem}\label{thm_lambda}
	Let $A$ satisfy the standard assumptions and $\Omega$ be a bounded, smooth and strictly convex domain. Let $\lambda_0, \lambda_{\e,L+j}$  $(0\le j\le M-1)$ be the Dirichlet eigenvalues defined previously. Then there exist a constant $\theta$ independent of $\e$ such that for sufficiently small $\e > 0$
	\begin{equation}\label{est.lambe}
	|\bar{\lambda}_\e - \lambda_0 - \e \theta| \le C\e^{\frac{3}{2}-},
	\end{equation}
	where $\bar{\lambda}_\e = M^{-1} \sum_{j=0}^{M-1} \lambda_{\e,L+j}$, and $C$ depends only on $\lambda_0,A$ and $\Omega$.
\end{theorem}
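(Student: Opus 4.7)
The plan is to extend Prange's approach in \cite{Prange13} to dimensions $d\ge 3$ by exploiting the almost-optimal rate of convergence for the Dirichlet boundary layer problem established in \cite{AKMP16, ShenZhuge16}. I would work with the inverse operators $T_\e := \cL_\e^{-1}$ and $T_0 := \cL_0^{-1}$, regarded as compact, self-adjoint, positive operators on $L^2(\Omega;\R^m)$. Their eigenvalues are $\mu_{\e,k} = 1/\lambda_{\e,k}$ and $\mu_{0,k} = 1/\lambda_{0,k}$, and the cluster of $T_\e$ eigenvalues near $\mu_0 := 1/\lambda_0$ has total multiplicity $M$ for small $\e$, with spectral projection $P_\e \to P_0$ in operator norm.

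The core step is to establish a first-order $L^2$ expansion of the form
\begin{equation*}
\|T_\e f - T_0 f - \e\, \chi^\beta(x/\e)\, \partial_\beta(T_0 f) + \e\, \Phi_\e(f)\|_{L^2(\Omega)} \le C \e^{3/2-} \|f\|_{L^2(\Omega)},
\end{equation*}
where $\chi$ is the standard interior corrector and $\Phi_\e(f)$ solves a Dirichlet boundary layer problem with oscillating boundary data $\chi(\cdot/\e)\nabla(T_0 f)|_{\partial\Omega}$. The construction and the rate $\e^{3/2-}$ would follow from combining the usual two-scale ansatz with the almost-optimal $L^2$ rate for boundary layer homogenization in smooth strictly convex domains from \cite{AKMP16, ShenZhuge16}. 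I would verify this first for smooth $f$ and then extend by density, tracking uniformity in $\|f\|_{L^2}$.

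With the expansion in hand, I would compute
\begin{equation*}
\sum_{j=0}^{M-1} \mu_{\e,L+j} = \txt{tr}(P_\e T_\e) = \txt{tr}(P_0 T_0) + \e \sum_{j=0}^{M-1} \langle Q \phi_{0,L+j}, \phi_{0,L+j}\rangle + O(\e^{3/2-}),
\end{equation*}
where $Q$ is the first-order operator read off from the above expansion. The bulk-corrector contribution involves oscillating integrals of the form $\int_\Omega \chi(x/\e)\nabla \phi_{0,j}\cdot \phi_{0,j}$; since $\chi$ is mean-zero and $\phi_0$ is smooth up to the boundary, a standard integration-by-parts argument in the slow variable reduces this to a boundary remainder of size $O(\e)$, absorbed by the error. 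The boundary-layer contribution converges to a definite constant $\theta'$ depending only on $A$, $\Omega$, $\lambda_0$ and the eigenspace, via the homogenization theorem of \cite{GM12, ShenZhuge16}. This yields the harmonic-mean expansion
\begin{equation*}
\Big(\tfrac{1}{M}\sum_{j=0}^{M-1} \tfrac{1}{\lambda_{\e,L+j}}\Big)^{-1} = \lambda_0 + \e\theta + O(\e^{3/2-})
\end{equation*}
for a suitable $\theta$. Finally, to pass to the arithmetic mean $\bar\lambda_\e$, I would use $\lambda_{\e,L+j}-\lambda_0 = O(\e)$ from \eqref{cdn_lambda} and a Taylor expansion of $\lambda \mapsto \lambda^{-1}$ at $\lambda_0$ to show that the arithmetic and harmonic means of $\{\lambda_{\e,L+j}\}$ agree up to $O(\e^2)$, preserving the first-order term.

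The hardest part will be establishing the $L^2$ operator expansion with rate $\e^{3/2-}$, which directly inherits the near-optimal rate from boundary layer homogenization in \cite{AKMP16, ShenZhuge16}: any loss there would propagate into the eigenvalue exponent, and verifying that one can close the estimate on $L^2$ (rather than, say, $H^1_{\txt{loc}}$) for the full operator rather than a single solution is delicate. A secondary point is the passage from the $L^2$ expansion of $T_\e - T_0$ to the trace formula; since we only need to track the projection onto the $M$-dimensional eigenspace spanned by $\{\phi_{0,L+j}\}$, trace-class issues reduce to finite-dimensional bookkeeping and should be routine.
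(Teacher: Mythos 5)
Your overall route matches the paper's: pass to the compact inverses $T_\e$, $T_0$, establish a first-order two-scale expansion of $T_\e$ whose new ingredient is the homogenized boundary-layer corrector $K^{bl}$, feed this into a trace/perturbation identity for the eigenvalue cluster, kill the $\chi^\e\nabla$ contribution by integrating by parts against the mean-zero oscillation, and finally convert between harmonic and arithmetic means of the $\lambda$'s via Taylor expansion and \eqref{cdn_lambda}. The paper packages the perturbation step by citing Osborn's theorem (via \cite[Theorem 3.1]{MV97}), which directly gives
\[
\Big|\bar\mu_\e - \mu_0 - \tfrac1M\Ag{(T_\e-T_0)\phi_{0,L+j},\phi_{0,L+j}}\Big|\le C\norm{T_\e-T_0}_{\mathcal{R}(S_0)\to L^2(\Omega)}^2\le C\e^2,
\]
whereas you would re-derive the same inequality from $\txt{tr}(P_\e T_\e)$; that is a legitimate but slightly more laborious alternative, since you would also have to control $\txt{tr}\bigl((P_\e-P_0)T_0\bigr)$.

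There is, however, one concrete flaw in the plan. You propose to prove the operator expansion
\[
\norm{T_\e f - T_0 f - \e\chi^\e\nabla T_0 f - \e\,(\txt{bl corrector})}_{L^2(\Omega)}\le C\e^{\frac32-}\norm{f}_{L^2(\Omega)}
\]
uniformly over $f\in L^2$, first for smooth $f$ and then by density. This cannot work: the $\e^{\frac32-}$ rate from Theorem~\ref{thm_L2NextOrder} carries the factor $\norm{u_0}_{W^{3,\infty}(\Omega)}$ with $u_0=T_0 f$, and for generic $f\in L^2$ one only has $u_0\in H^2\cap H^1_0$, so the constant is not controlled by $\norm{f}_{L^2}$ and a density argument fails. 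The paper avoids this by stating the expansion only as an operator norm estimate from $\mathcal{R}(S_0)$ to $L^2(\Omega)$ (Lemma~\ref{lem_TeT}): since $\mathcal{R}(S_0)$ is the finite-dimensional span of smooth eigenfunctions, the embedding $\mathcal{R}(S_0)\hookrightarrow W^{3,\infty}(\Omega;\R^m)$ is bounded, and Theorem~\ref{thm_L2NextOrder} applies directly with a uniform constant. You in fact already observe at the end that only the projection onto the $M$-dimensional eigenspace matters, so the repair is simply to replace the ambition of a uniform $L^2\to L^2$ expansion with the $\mathcal{R}(S_0)\to L^2$ version; once that is done, the rest of your plan goes through and reproduces the theorem.
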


We should point out that the rate in (\ref{est.lambe}) is $O(\e^{\frac{5}{4}-})$ for $d=2$, which improves (\ref{est.Prange}). The explicit formula for $\theta$ is given by (also see \cite{Prange13})
\begin{equation*}
\theta = -\frac{\lambda_0}{M} \Ag{K^{bl}\phi_{0,L+j}, \phi_{0,L+j}},
\end{equation*}
where $\Ag{\cdot,\cdot}$ denotes the inner product in $L^2(\Omega;\R^m)$, $\phi_{0,L+j}, 0\le j\le M-1$, are the eigenfunctions of $\cL_0$ corresponding to $\lambda_{0}$, and $K^{bl}$ is a linear operator naturally arising in the homogenization of boundary layers; see (\ref{def_K1}) and (\ref{eq.vblD}) for the definition. The exponent $\frac{3}{2}$ in (\ref{est.lambe}) seems optimal due to the optimality of the convergence rate for Dirichlet boundary layer problem in Theorem \ref{thm_blL2}. The proof of Theorem \ref{thm_lambda} follows from the same argument as \cite{Prange13} by using Osborn's theorem \cite{Os75}, yet by a simple observation, we replace the harmonic average in (\ref{est.Prange}) by the usual arithmetic average $\bar{\lambda}_\e$.

Now we turn to the main contribution of this paper, i.e., the first-order (two-scale) expansion of the eigenfunctions or eigenspaces, which is not known to the best of our knowledge. Recall that for $k\ge 1$ so that $\lambda_{0,k}$ is simple, one has (see \cite{Os75} or Lemma \ref{lem_EeEL2})
\begin{equation*}
\norm{\phi_{\e,k} - \phi_{0,k}}_{L^2(\Omega)} \le C_k\e.
\end{equation*}
Then, a natural question similar to eigenvalues arises: does $(\phi_{\e,k} - \phi_{0,k})/\e$ have a unique limit in some sense, as $\e \to 0$? To describe our result regarding this question, consider the Dirichlet problem
\begin{equation}\label{eq_Leue}
\left\{
\begin{aligned}
\cL_\e u_\e &= f &\quad & \txt{in } \Omega, \\
u_\e &= 0 &\quad & \txt{on } \partial\Omega.
\end{aligned}
\right.
\end{equation}
For each $f\in L^2(\Omega;\R^m)$, the above equation has a unique weak solution $u_\e \in H_0^1(\Omega;\R^m)$. Let $T_\e:L^2(\Omega;\R^m) \mapsto H^1_0(\Omega;\R^m)$ denote the linear map $f \mapsto u_\e$, i.e., $T_\e f = u_\e$. By the similar manner, denote by $T_0:L^2(\Omega;\R^m) \mapsto H^1_0(\Omega;\R^m)$ the linear map $f \to u_0$, where $u_0$ is the unique solution of the homogenized system
\begin{equation}\label{eq_L0u0}
\left\{
\begin{aligned}
\cL_0 u_0 &= f &\quad & \txt{in } \Omega, \\
u_0 &= 0 &\quad & \txt{on } \partial\Omega,
\end{aligned}
\right.
\end{equation}
where $\cL_0 = -\txt{div}(\widehat{A} \nabla )$ is the homogenized operator.

As before, let $\lambda_0 = \lambda_{0,L+j}$, $0\le j\le M-1$, be a Dirichlet eigenvalue of $\cL_0$ with multiplicity $M$ and $\lambda_{\e,L+j}$ be the eigenvalues of $\cL_\e$ converging to $\lambda_0$. Let $S_0$ be the spectral projection onto the eigenspace of $\cL_0$ corresponding to $\lambda_0$. In other words, for any $f\in L^2(\Omega;\R^m)$, define
\begin{equation}\label{eq.S0}
S_0 f = \Ag{f,\phi_{0,L+j}}\phi_{0,L+j}.
\end{equation}
Similarly, we denote by $S_\e$ the spectral projection onto the eigenspace of $\cL_\e$ corresponding to $\{\lambda_{\e,L+j}:0\le j\le M-1\}$, i.e.,
\begin{equation}\label{eq.Se}
S_\e f = \Ag{f,\phi_{\e,L+j}}\phi_{\e,L+j}.
\end{equation}
Let $\mathcal{R}(S_\e)$ and $\mathcal{R}(S_0)$ denote the ranges of $S_\e$ and $S_0$, respectively. 

Essentially, the asymptotic behavior of the eigenfunctions or eigenspaces is completely determined by those of $S_\e$ and $S_0$. The main result of this paper for Dirichlet problem is the following.
\begin{theorem}\label{thm_Ee}
	Let $A$ and $\Omega$ satisfy the same assumptions as Theorem \ref{thm_lambda}. Let $S_\e$ and $S_0$ be the spectral projections defined above. Then,
	\begin{equation}\label{est.Se-RS0}
	\norm{S_\e - S_0 - \e (\chi^\e \nabla + \Psi^{bl}) S_0} _{\mathcal{R}(S_0) \to L^2(\Omega)} \le C\e^{\frac{3}{2}-},
	\end{equation}
	and
	\begin{equation}\label{est.Se-L2}
	\norm{S_\e - S_0 - \e (\chi^\e \nabla + \Psi^{bl}) S_0 - \e S_0 (\chi^\e \nabla + \Psi^{bl})^*}_{L^2(\Omega) \to L^2(\Omega)} \le C \e^{\frac{3}{2}-},
	\end{equation}
	where $\chi^\e = \chi(\cdot/\e)$ is the first-order corrector, $\Psi^{bl}:\mathcal{R}(S_0) \mapsto \mathcal{R}(S_0)^\perp$ is a bounded linear operator given by
	\begin{equation}\label{eq_psi1bl}
	\Psi^{bl}g = \lambda_0^{-1} (\lambda_0^{-1}-T_0)^{-1} (I- S_0) K^{bl}g,
	\end{equation}
	and $C$ depends only on $\lambda_0, A$ and $\Omega$.
\end{theorem}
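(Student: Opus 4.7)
I would lift a first-order expansion of the resolvents $T_\e, T_0$ to the spectral projections via the Dunford--Riesz functional calculus. Fix a small positively oriented circle $\Gamma \subset \C$ around $\lambda_0^{-1}$ enclosing no other point of $\sigma(T_0)$; since $\|T_\e - T_0\|_{L^2 \to L^2} = O(\e)$ by (\ref{cdn_lambda}) and $T_\e$ is compact and self-adjoint, $\Gamma$ also encloses precisely $\{\lambda_{\e,L+j}^{-1}\}$ for sufficiently small $\e$. Writing both $S_0$ and $S_\e$ as contour integrals of the corresponding resolvents and applying the resolvent identity once,
\begin{equation*}
S_\e - S_0 = \frac{1}{2\pi i}\oint_\Gamma (zI - T_0)^{-1}(T_\e - T_0)(zI - T_0)^{-1}\,dz + Q_\e,
\end{equation*}
where $Q_\e$ is quadratic in $T_\e - T_0$ and has operator norm $O(\e^2)$, well below the target remainder.

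\textbf{First-order expansion of $T_\e - T_0$.} For $f \in L^2(\Omega;\R^m)$, set $u_0 = T_0 f$. The classical two-scale expansion gives $u_\e - u_0 = \e\chi^\e\nabla u_0 + \e V^\e + R_\e$, where $V^\e$ is the boundary-layer corrector ($\cL_\e$-harmonic in $\Omega$ with trace $-\chi^\e\nabla u_0$ on $\partial\Omega$) introduced to fix the non-vanishing boundary trace of $\chi^\e\nabla u_0$, and $R_\e$ is an interior remainder. The almost-optimal $L^2$ rate for the Dirichlet boundary-layer homogenization (Theorem \ref{thm_blL2}, refining \cite{GM12,AKMP16,ShenZhuge16}) identifies the $L^2$-limit of $V^\e$ as $K^{bl}(u_0)$ with an $\e^{1/2-}$ rate, whence
\begin{equation*}
\bigl\| T_\e f - T_0 f - \e\chi^\e\nabla T_0 f - \e K^{bl}(T_0 f)\bigr\|_{L^2(\Omega)} \le C\e^{3/2-}\|f\|_{L^2(\Omega)}.
\end{equation*}
Writing $B_\e := (\chi^\e\nabla + K^{bl})T_0$ and splitting $(zI - T_0)^{-1} = (z - \lambda_0^{-1})^{-1}S_0 + (zI - T_0)^{-1}(I - S_0)$, the $(z-\lambda_0^{-1})^{-2}$ cross-term integrates to zero, and a residue calculation at $z = \lambda_0^{-1}$ gives
\begin{equation*}
S_\e - S_0 = \e\bigl[(\lambda_0^{-1}I - T_0)^{-1}(I - S_0) B_\e S_0 + S_0 B_\e (\lambda_0^{-1}I - T_0)^{-1}(I - S_0)\bigr] + O(\e^{3/2-}).
\end{equation*}

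\textbf{Matching the statement and main obstacle.} Restricted to $g \in \mathcal{R}(S_0)$, the second summand vanishes and, using $T_0 g = \lambda_0^{-1}g$, the first equals $\e\lambda_0^{-1}(\lambda_0^{-1}I - T_0)^{-1}(I - S_0)(\chi^\e\nabla + K^{bl})g$. The $K^{bl}$ piece is exactly $\e\Psi^{bl}g$ by (\ref{eq_psi1bl}). For the $\chi^\e\nabla g$ piece, the oscillation cancellations $\|S_0(\chi^\e\nabla g)\|_{L^2} = O(\e)$ (testing $\chi^\e$ against smooth eigenfunctions) and $\|T_0(\chi^\e\nabla g)\|_{L^2} = O(\e)$ (from $\|\chi^\e\nabla g\|_{H^{-1}} = O(\e)$ and Lax--Milgram) imply, upon solving the resolvent equation for $h := \lambda_0^{-1}(\lambda_0^{-1}I - T_0)^{-1}(I - S_0)\chi^\e\nabla g$ via the ansatz $h = \chi^\e\nabla g + h'$, that $h' = O(\e)$ in $L^2$; the resulting $O(\e^2)$ correction is absorbed into the remainder, giving (\ref{est.Se-RS0}). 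For (\ref{est.Se-L2}), self-adjointness of $T_\e - T_0$ forces $\|B_\e - B_\e^*\|_{L^2 \to L^2} = O(\e^{1/2-})$, so the second summand in the residue formula equals $\e S_0(\chi^\e\nabla + \Psi^{bl})^*$ modulo the target remainder. The main obstacle is Step 2: the first-order expansion of $T_\e - T_0$ in the $L^2 \to L^2$ operator norm with the near-optimal $\e^{3/2-}$ remainder, which rests essentially on the sharp boundary-layer rate of Theorem \ref{thm_blL2}; without it, only a weaker exponent such as the $\e^{12/11-}$ of \cite{Prange13} is accessible.
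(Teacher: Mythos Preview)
Your overall framework—Riesz functional calculus, the second resolvent identity iterated twice, and the residue decomposition—matches the paper's approach, and your treatment of (\ref{est.Se-RS0}) is essentially the same as the paper's Part I proof.

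The genuine gap is in your handling of (\ref{est.Se-L2}). You assert
\[
\bigl\| T_\e f - T_0 f - \e\chi^\e\nabla T_0 f - \e K^{bl}(T_0 f)\bigr\|_{L^2(\Omega)} \le C\e^{3/2-}\|f\|_{L^2(\Omega)}
\]
as an $L^2\to L^2$ operator bound. This is not available: Theorem~\ref{thm_L2NextOrder} controls the left side by $C\e^{3/2-}\|T_0 f\|_{W^{3,\infty}(\Omega)}$, and for generic $f\in L^2$ one only has $T_0 f\in H^2$. In fact $K^{bl}$ is only defined on $W^{3,\infty}$ (see (\ref{def_K1})), so your operator $B_\e=(\chi^\e\nabla+K^{bl})T_0$ is not bounded on $L^2$, and the statement ``$\|B_\e-B_\e^*\|_{L^2\to L^2}=O(\e^{1/2-})$'' is ill-posed. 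The paper only proves the expansion of $T_\e-T_0$ on $\mathcal{R}(S_0)$ (Lemma~\ref{lem_TeT}), where the eigenfunctions are smooth; this is precisely why your residue term $S_0(T_\e-T_0)(\mu_0-T_0)^{-1}(I-S_0)$ cannot be expanded by direct substitution.

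The paper bypasses this entirely: for $g\in\mathcal{R}(S_0)^\perp$ it never expands $T_\e-T_0$ acting on $g$, but instead exploits $S_\e^*=S_\e$ and $S_\e^2=S_\e$ directly. For $h_1\in\mathcal{R}(S_0)$ one writes $\langle h_1,S_\e g\rangle=\langle S_\e h_1,g\rangle$ and invokes (\ref{est.Se-RS0}) already proved; for $h_2\in\mathcal{R}(S_0)^\perp$ one uses $\langle h_2,S_\e g\rangle=\langle(S_\e-S_0)h_2,(S_\e-S_0)g\rangle=O(\e^2)$ via Lemma~\ref{lem_EeEL2}. This yields Lemma~\ref{lem_Eeperp}, and (\ref{est.Se-L2}) then follows by splitting $g=S_0g+(I-S_0)g$. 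Your residue formula could also be salvaged by observing that the two cross terms are mutual adjoints (since $T_\e,T_0,S_0$ are self-adjoint), so the second term equals the adjoint of the first with the same $O(\e^{3/2-})$ remainder—but that is not the argument you gave.
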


Observe that (\ref{est.Se-RS0}) is the expansion of $S_\e$ restricted in $\mathcal{R}(S_0)$ and (\ref{est.Se-L2}) is the expansion of $S_\e$ on the entire space $L^2(\Omega;\R^m)$. 
We should point out that the nontrivial operator $\Psi^{bl}$ introduced above plays a crucial role in correcting the first-order term involving the boundary layers. It is well-defined on $\mathcal{R}(S_0)$, since $(I- S_0) K^{bl}g \in \mathcal{R}(S_0)^\perp$ and the Fredholm theory implies $(\lambda_0^{-1} - T_0)^{-1}$ is a bounded linear operator on $\mathcal{R}(S_0)^\perp$. Actually, one can show that $\psi^{bl} = \Psi^{bl}g$ is the unique solution in $\mathcal{R}(S_0)^\perp$ of the following system: 
\begin{equation}
\left\{
\begin{aligned}
\cL_0 \psi^{bl} &= \bigg( -\bar{c}_{ijk} \frac{\partial^3}{\partial x_i \partial x_j \partial x_k} - \lambda_0 S_0   K^{bl} \bigg) g + \lambda_0 \psi^{bl} &\quad & \txt{in } \Omega, \\
\psi^{bl} &= K^{bl} g &\quad & \txt{on } \partial\Omega.
\end{aligned}
\right.
\end{equation}
Note that $\psi^{bl} + \mathcal{R}(S_0)$ is the solution set of the above system; see Remark \ref{rmk.psibl}.

In the case that $\lambda_0 = \lambda_{0,L}$ is a simple eigenvalue with eigenfunction $\phi_0 = \phi_{0,L}$, Theorem \ref{thm_Ee} implies that the eigenfunction $\phi_\e = \phi_{\e,L}$ satisfies
\begin{equation}\label{est.phie.simple}
\norm{\phi_\e - \phi_0 - \e \chi^\e \nabla \phi_0 - \e \Psi^{bl} \phi_0} _{L^2(\Omega)} \le C \e^{\frac{3}{2}-}.
\end{equation}
This particularly implies that
\begin{equation*}
\frac{\phi_\e - \phi_0}{\e} \longrightarrow \Psi^{bl} \phi_0 \quad \txt{weakly in } L^2(\Omega;\R^m),
\end{equation*}
and
\begin{equation*}
\frac{\phi_\e - \phi_0 - \e \chi^\e \nabla \phi_0}{\e} \longrightarrow \Psi^{bl} \phi_0 \quad \txt{strongly in } L^2(\Omega;\R^m),
\end{equation*}
which provide a positive answer to our previous question.

Our next result is the interior first-order expansion for the gradient of an eigenfunction corresponding to a simple eigenvalue.
\begin{theorem}\label{thm.dDphie}
	Let $A$ and $\Omega$ satisfy the same assumptions as Theorem \ref{thm_lambda}. Let $\lambda_0 = \lambda_{0,L}$ be a simple eigenvalue of $\cL_0$ and $\phi_0 = \phi_{0,L}, \phi_\e = \phi_{\e,L}$ be the eigenfunctions of $\cL_0$ and $\cL_\e$, correspondingly. Then, in the sense of $L^2(\Omega;\R^m)$, we have the following expansion of $\delta \nabla \phi_\e$,
	\begin{equation*}
		\begin{aligned}
			\delta \nabla \phi_\e 
			& =  \delta(I+\nabla \chi^\e )\nabla \phi_0 \\ 
			& \quad + \e\delta \big[ (\chi^\e I + \nabla \varUpsilon^\e) \nabla^2 \phi_0 + (I+ \nabla \chi^\e) \nabla \Psi^{bl}\phi_0 \big]  + O(\e^{\frac{3}{2}-}),
		\end{aligned}
	\end{equation*}
	where $\delta(x) = \txt{dist}(x,\partial\Omega)$, $\chi^\e$ and $\varUpsilon^\e$ are the first-order and second-order correctors, respectively, and the implicit constant depends only on $\lambda_0, A$ and $\Omega$.	
\end{theorem}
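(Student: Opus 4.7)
The plan is to push the $L^2$ expansion of $\phi_\e$ in (\ref{est.phie.simple}) one order higher by inserting second-order correctors, and then pass from the $L^2$ expansion of $\phi_\e$ to a gradient expansion via an interior energy estimate weighted by $\delta$.

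I would start from the two-scale ansatz
$$
w_\e = \phi_0 + \e\chi^\e\nabla\phi_0 + \e\Psi^{bl}\phi_0 + \e^2\varUpsilon^\e\nabla^2\phi_0 + \e^2\chi^\e\nabla(\Psi^{bl}\phi_0).
$$
The first three terms reproduce the $L^2$ approximation of $\phi_\e$ supplied by Theorem \ref{thm_Ee}, while the two $\e^2$ pieces are the natural second-order corrections: they are inserted because applying $\nabla$ to them produces precisely the $(\nabla\varUpsilon)^\e\nabla^2\phi_0$ and $(\nabla\chi)^\e\nabla\Psi^{bl}\phi_0$ contributions appearing in the claimed formula. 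A direct product-rule computation gives
$$
\nabla w_\e = (I+\nabla\chi^\e)\nabla\phi_0 + \e\bigl[(\chi^\e I + \nabla\varUpsilon^\e)\nabla^2\phi_0 + (I+\nabla\chi^\e)\nabla\Psi^{bl}\phi_0\bigr] + O(\e^2),
$$
so after multiplying by $\delta$, the theorem reduces to the bound $\|\delta\nabla(\phi_\e - w_\e)\|_{L^2(\Omega)}\le C\e^{3/2-}$.

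To obtain this bound I would compute $\cL_\e w_\e$ using the defining equations for $\chi$, for the second-order corrector $\varUpsilon$ (both standard; see \cite{ShenBook}), and for the boundary-layer corrector $\psi^{bl}=\Psi^{bl}\phi_0$ (the Dirichlet system displayed just after Theorem \ref{thm_Ee} with $g=\phi_0$). The $\chi$-equation cancels the $O(\e^{-1})$ terms, the $\varUpsilon$-equation absorbs the constant $\bar c_{ijk}\partial_i\partial_j\partial_k\phi_0$ correction left over at interior $O(1)$ level, and the PDE for $\psi^{bl}$ was designed precisely to offset both the interior $\lambda_0 S_0 K^{bl}$-contribution and the boundary trace of $\chi^\e\nabla\phi_0$. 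Combining these with $\cL_\e\phi_\e=\lambda_\e\phi_\e$, $\cL_0\phi_0=\lambda_0\phi_0$, and the eigenvalue rate $|\lambda_\e-\lambda_0|=O(\e^{3/2-})$ from Theorem \ref{thm_lambda}, one finds $\cL_\e(\phi_\e-w_\e)=G_\e$, where $G_\e$ splits into a truly interior residual of $L^2$-norm $O(\e)$ and a boundary-layer residual supported in an $O(\e)$-wide strip near $\partial\Omega$. The payoff of the weight appears now: applying the interior energy estimate
$$
\|\delta\nabla u\|_{L^2(\Omega)}\le C\|u\|_{L^2(\Omega)} + C\|\delta f\|_{L^2(\Omega)},
$$
valid for any weak solution of $\cL_\e u=f$ and proved by testing against $\delta^2 u$ (the boundary terms vanish because $\delta|_{\partial\Omega}=0$), with $u=\phi_\e-w_\e$ and $f=G_\e$ gives $\|u\|_{L^2}=O(\e^{3/2-})$ by (\ref{est.phie.simple}), and because the rough part of $G_\e$ is concentrated where $\delta\le C\e$, one also has $\|\delta G_\e\|_{L^2}=O(\e^{3/2-})$, yielding the theorem.

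The hard part will be the last estimate on $\|\delta G_\e\|_{L^2}$: even though the boundary-layer piece of $G_\e$ has $L^2$-norm only $O(\e^{1/2-})$, one must exploit its concentration in an $O(\e)$-thin strip around $\partial\Omega$ together with the sharp homogenization rate for the boundary-layer problem (Theorem \ref{thm_blL2} and the analysis behind (\ref{est.Se-RS0})) to get the extra factor of $\e$ needed after pairing with $\delta$. A secondary subtlety is checking that $\psi^{bl}=\Psi^{bl}\phi_0$ has enough regularity for $\nabla^2\psi^{bl}$ to be controlled in $L^2_{\text{loc}}$, which follows from elliptic regularity applied to (\ref{eq_psi1bl}) once one knows that $K^{bl}\phi_0$ is smooth up to $\partial\Omega$; and checking the same for $\varUpsilon^\e\nabla^2\phi_0$ using the smoothness of $\phi_0$ up to the boundary.
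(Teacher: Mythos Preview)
Your overall strategy---build a single ansatz $w_\e$, reduce to $\|\delta\nabla(\phi_\e-w_\e)\|_{L^2}$, and close by the weighted interior energy inequality obtained by testing against $\delta^2(\phi_\e-w_\e)$---is sound and in fact more direct than the paper's route. The paper instead writes $\phi_\e = T_\e(\lambda_\e\phi_\e)$, expands $\lambda_\e\phi_\e$ via Theorems~\ref{thm_lambda}--\ref{thm_Ee}, and then treats the leading piece $\nabla T_\e\phi_0$ through the second-order expansion (Theorem~\ref{thm.ue2nd}) involving the $\e$-dependent boundary layer correctors $v^{bl}_{1,\e},v^{bl}_{2,\e}$; the weighted estimate (Lemma~\ref{lem.delta.H1}) is applied to $v^{bl}_{1,\e}-v^{bl}_{1,0}$, whose $L^2$ smallness comes from Theorem~\ref{thm_blL2}. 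Your approach bypasses $v^{bl}_{1,\e}$ entirely.

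However, your description of $G_\e$ contains a genuine gap. First, Theorem~\ref{thm_lambda} does \emph{not} give $|\lambda_\e-\lambda_0|=O(\e^{3/2-})$; it gives $\lambda_\e=\lambda_0+\e\theta+O(\e^{3/2-})$, and the $\e\theta$ term is essential. If you carry out the computation of $\cL_\e w_\e$ using the cell problems and the equation for $\psi^{bl}=\Psi^{bl}\phi_0$ from Remark~\ref{rmk.psibl}, the $\bar c_{ijk}\partial^3\phi_0$ terms cancel and you obtain
\[
\cL_\e w_\e=\lambda_0\phi_0-\e\lambda_0 S_0K^{bl}\phi_0+\e\lambda_0\psi^{bl}+\e^2\,\mathrm{div}(F_\e),
\]
with $\|\delta F_\e\|_{L^2}\le C$ (this uses $\|\delta\nabla^2\psi^{bl}\|_{L^2}<\infty$, i.e.\ Lemma~\ref{lem.dD2.v0}; note $K^{bl}\phi_0|_{\partial\Omega}=f^*$ is only $W^{1,p}$, not smooth, so $\psi^{bl}\notin H^2(\Omega)$ in general). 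Comparing with $\cL_\e\phi_\e=\lambda_\e\phi_\e$ and expanding $\lambda_\e\phi_\e$ via (\ref{est.phie.cor}) and Theorem~\ref{thm_lambda} yields
\[
G_\e=\e\lambda_0\chi^\e\nabla\phi_0+\e\big[\theta\phi_0+\lambda_0 S_0K^{bl}\phi_0\big]-\e^2\,\mathrm{div}(F_\e)+O_{L^2}(\e^{3/2-}).
\]
The bracket vanishes because $\theta=-\lambda_0\langle K^{bl}\phi_0,\phi_0\rangle$ and $S_0K^{bl}\phi_0=\langle K^{bl}\phi_0,\phi_0\rangle\phi_0$---this cancellation is lost if you drop $\e\theta$. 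What remains is $\e\lambda_0\chi^\e\nabla\phi_0$, which is a purely \emph{interior} oscillating source, \emph{not} concentrated in any boundary strip; your proposed mechanism (``concentration where $\delta\le C\e$'' together with Theorem~\ref{thm_blL2}) simply does not apply to it, and naively $\|\delta\cdot\e\chi^\e\nabla\phi_0\|_{L^2}=O(\e)$ is insufficient. The correct fix is elementary: write $\chi=-\Delta B$ with $B$ periodic so that $\e\chi^\e\nabla\phi_0=-\e^2\,\mathrm{div}\big((\nabla B)^\e\nabla\phi_0\big)+\e^2(\nabla B)^\e\nabla^2\phi_0$, placing this term in divergence form with $\|\delta F\|_{L^2}+\|\delta f\|_{L^2}=O(\e^2)$. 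With these two repairs your argument closes.
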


Note that the above theorem only provides the interior expansion as the distance function vanishes at the boundary. More precisely, it implies that for any $\Omega' \subset\subset \Omega$,
\begin{equation*}
	\norm{\nabla \phi_\e -(I+\nabla \chi^\e )\nabla \phi_0 -\e [ (\chi^\e I + \nabla \varUpsilon^\e) \nabla^2 \phi_0 + (I+ \nabla \chi^\e) \nabla \Psi^{bl}\phi_0 ] }_{L^2(\Omega')}\le C\e^{\frac{3}{2}-}.
\end{equation*}
where $C$ depends also on $\txt{dist}(\Omega',\partial\Omega)$. In particular, this implies that
\begin{equation*}
\frac{\nabla \phi_\e -(I+\nabla \chi^\e )\nabla \phi_0}{\e} \longrightarrow \nabla \Psi^{bl}\phi_0 \qquad \txt{weakly in } L^2_{\txt{loc}}(\Omega;\R^m).
\end{equation*}

\begin{remark}
	We emphasize that it is possible to specify how the constants in this paper depend on the eigenvalue $\lambda_0$. However, we will not try to track this dependence as it is difficult to verify the sharpness of the dependence on $\lambda_0$ (though this is possible with extra efforts).
\end{remark}

\begin{remark}
	We should point out that similar results as in Theorem \ref{thm_lambda}, \ref{thm_Ee} and \ref{thm.dDphie} hold for Neumann eigenvalue problem as well, due to the recent work in \cite{ShenZhuge16}, and the proofs are almost the same, which will be omitted. We will briefly introduce the problem and state the results in Theorem \ref{thm_Nlambda}, \ref{thm.NSe} and \ref{thm.dDphieN}.
\end{remark}

\begin{remark}
	As shown in \cite{MV97}, without any geometry condition on the domain $\Omega$, the first-order term in the expansions of the eigenvalues (or eigenfunctions) of $\cL_\e$ may not be unique and depend on the parameter $\e$. However, it is possible to generalize all of our results in this paper from the strictly convex domains to more general domains, such as domains of finite type \cite{Zhuge17}, at least for Dirichlet problem. This is because the geometry of domains only play a role in Theorem \ref{thm_L2NextOrder} and Theorem \ref{thm_NL2NextOrder}. By Remark \ref{rmk_ftype}, estimates of the same type can be extended, at least, to the Dirichlet problem in domains of finite type. As a result, Theorem \ref{thm_lambda}, \ref{thm_Ee} and \ref{thm.dDphie} may be generalized identically with a worse rate of $O(\e^{1+\frac{\alpha^*}{2} -})$, for some $\alpha^* \in (0,1]$.
\end{remark}

Finally, we give the organization of the paper and some key ideas in the proofs. In Section 2, we give preliminaries of the classical homogenization theory and recent results on the boundary layer problems. In Section 3, we prove Theorem \ref{thm_lambda}, following the same argument as in \cite{Prange13}. In particular, we observe a crucial first-order expansion for the operator $T_\e$
\begin{equation}\label{est.Tesim}
T_\e \sim T_0 + \e \chi^\e \nabla T_0 + \e K^{bl} T_0 +O(\e^{\frac{3}{2}-}).
\end{equation}
Theorem \ref{thm_Ee} is proved in Section 4, where the classical Riesz functional calculus will be our main tool, as $S_\e$ can be expressed by
\begin{equation*}
S_\e f = \frac{1}{2\pi i} \int_{\Gamma} (z-T_\e)^{-1} f dz,
\end{equation*}
where $\Gamma$ is a suitable contour in the complex plane $\C$. Then the expansion of $S_\e$ is reduced to that of $(z-T_\e)^{-1}$ for $z\in \Gamma$, which could be handled by the second resolvent identity and a more careful analysis combined with (\ref{est.Tesim}). Section 5 is devoted to the proof of Theorem \ref{thm.dDphie} which relies on both Theorem \ref{thm_lambda} and \ref{thm_Ee}. Under the assumption of Theorem \ref{thm.dDphie}, one can formally write
\begin{equation*}
\begin{aligned}
\cL_\e \phi_\e & = \lambda_\e \phi_\e \\
& \sim (\lambda_0 + \e \theta)(\phi_0 + \e \chi^\e \nabla \phi_0 + \e \Psi^{bl} \phi_0) + O(\e^{\frac{3}{2}-}).
\end{aligned}
\end{equation*}
Then, we exploit a new trick to deal with the interior gradient estimates in a circumstance with rough boundary data. In contrast to the classical argument in periodic homogenization in which $H^1$ error estimate is obtained priori to $L^2$ estimate, we reverse the argument in a sense that the interior ($\delta$-weighted) $H^1$ estimate follows from the $L^2$ estimate, regardless of the boundary condition, while the later is previously known by the homogenization of boundary layers. Finally, in Section 6, we state the parallel results for Neumann problem.

\section{Notations and Preliminaries}
Most notations in this paper are standard and the summation convention is used throughout for subscripts $i,j,k,\ell$ and supscripts $\alpha,\beta$, etc (it never applies to capital letters). For a 1-periodic function $f$, let $f^\e(x) = f(x/\e)$. We will also use the expression $a = b + O(r)$ to represent $|a-b| \le Cr$ (or $\norm{a-b}_{L^2(\Omega)} \le Cr$, depending the type of elements involved) for some constant $C$. As usual, the constant $C$ throughout this paper varies from line to line but never depends on $\e$. 

For any two Banach spaces $X$ and $Y$, let $\norm{\cdot}_{X\to Y}$ denote the operator norm from $X$ to $Y$. For a general function space $X$ $\subset L^1(\Omega;\R^m)$, let $\dot{X}$ denote the subspace of $X$ with elements satisfying the zero mean value property, i.e., $\dot{X} = \{ f\in X: \int_{\Omega} f = 0 \}$.

For each $1\le j\le d, 1\le \beta\le m$, let $\chi = (\chi_j^\beta) = (\chi_j^{1\beta},\chi_j^{2\beta},\cdots,\chi_j^{m\beta})$ denote the first-order correctors for $\cL_\e$, which are 1-periodic functions satisfying the cell problem
\begin{equation*}
	\left\{
	\begin{aligned}
		\cL_1 (\chi^{\beta}_{j} + P_j^\beta) &= 0 \qquad  \txt{ in } \T^d, \\
		\int_{\T^d} \chi_j^\beta & = 0,
	\end{aligned}
	\right.
\end{equation*}
where $P_j^\beta(x) = x_je^\beta $ with $e^\beta$ being $\beta$th Cartesian basis in $\R^m$. Recall that the homogenized matrix $\widehat{A} = (\hat{a}^{\alpha\beta}_{ij})$ is defined by
\begin{equation*}
\hat{a}^{\alpha\beta}_{ij} = \int_{\T^d} \bigg[ a^{\alpha\beta}_{ij} + a^{\alpha\gamma}_{ik} \frac{\partial}{\partial x_k} (\chi^{\gamma\beta}_j) \bigg] dx,
\end{equation*}
and the homogenized operator is given by $\cL_0 = -\txt{div}(\widehat{A} \nabla )$.

To study the first-order expansion of the eigenfunctions, we also need the second-order correctors for $\cL_\e$, $\varUpsilon = (\varUpsilon_{ij}^\beta) = (\varUpsilon_{ij}^{1\beta}, \varUpsilon_{ij}^{2\beta}, \cdots, \varUpsilon_{ij}^{m\beta})$, which are 1-periodic functions satisfying the cell problem
\begin{equation*}
	\left\{
	\begin{aligned}
		\cL_1 (\varUpsilon^{\beta}_{ij}) &= a_{ij}^{\cdot\beta} + a_{ik}^{\cdot\gamma} \frac{\partial}{\partial x_k} \chi_j^{\gamma\beta} - \hat{a}_{ij}^{\cdot\beta} + \frac{\partial}{\partial x_k}(a_{k i}^{\cdot\gamma} \chi_{j}^{\gamma\beta}) \qquad  \txt{ in } \T^d, \\
		\int_{\T^d} \varUpsilon_{ij}^\beta & = 0.
	\end{aligned}
	\right.
\end{equation*}
Under our standard assumptions on $A$, both $\chi$ and $\varUpsilon$ are smooth.

In the following context, we consider the first-order convergence rates for both Dirichlet and Neumann problems. First recall that the usual Dirichlet problem in a bounded domain $\Omega$ is
\begin{equation}\label{eq_Due}
\left\{
\begin{aligned}
\cL_\e u_\e(x) &= F(x) &\quad & \txt{in } \Omega, \\
u_\e(x) &= f(x) &\quad & \txt{on } \partial\Omega.
\end{aligned}
\right.
\end{equation}
For sufficiently regular $F$ and $f$, $u_\e$ converges to $u_0$ in $L^2(\Omega;\R^m)$ as $\e \to 0$, where $u_0$ is the solution of the homogenized equation with the same data,
\begin{equation}\label{eq_Du0}
\left\{
\begin{aligned}
\cL_0 u_0(x) &= F(x) &\quad & \txt{in } \Omega, \\
u_0(x) &= f(x) &\quad & \txt{on } \partial\Omega.
\end{aligned}
\right.
\end{equation}
Moreover, a formal asymptotic expansion for $u_\e$ is as follows
\begin{align}\label{eq_asymp}
	\begin{aligned}
		u_\e(x) = u_0(x) + & \e \bigg[ \chi_j\Big(\frac{x}{\e}\Big) \frac{\partial }{\partial x_j}u_0(x)  + v_{1,\e}^{bl}(x) \bigg] \\
		& + \e^2 \bigg[ \varUpsilon_{ij}\Big(\frac{x}{\e}\Big) \frac{\partial^2}{\partial x_i \partial x_j} u_0(x)  + v_{2,\e}^{bl}(x) \bigg] + \cdots,
	\end{aligned}
\end{align}
where $v_{n,\e}^{bl}$ is the $n$th-order boundary layer correction which solves a system with oscillating Dirichlet boundary data. In particular, $v_{1,\e}^{bl}$ is the solution of
\begin{equation}\label{eq_bl}
\left\{
\begin{aligned}
\cL_\e v_{1,\e}^{bl}(x) & = F^*(x) := -\bar{c}_{ijk} \frac{\partial^3 u_0(x)}{\partial x_i \partial x_j \partial x_k}  &\quad & \txt{in } \Omega, \\
v_{1,\e}^{bl}(x) &= -\chi_j\Big(\frac{x}{\e}\Big) \frac{\partial u_0}{\partial x_j}(x) &\quad & \txt{on } \partial\Omega,
\end{aligned}
\right.
\end{equation}
where $\bar{c}_{ijk}$ are constant.\footnote{In the scalar case that $m=1$, a careful computation shows that $F^* = 0$. But this fact will not be used in this paper.} The asymptotic analysis of $v_{1,\e}^{bl}$ in the above system is crucial for the higher-order expansion of $u_\e$. In the following theorem, we summarize the optimal results by far.
\begin{theorem}\label{thm_blL2}
	Let $A$ satisfy the ellipticity, periodicity and regularity assumptions and $\Omega$ be a bounded, smooth and strictly convex domain. Let $v_{1,\e}^{bl}$ be the solution of (\ref{eq_bl}). Then, there exists $v_{1,0}^{bl} \in L^2(\Omega;\R^m)$ independent of $\e$ such that
	\begin{equation*}
	\norm{v_{1,\e}^{bl} - v_{1,0}^{bl}}_{L^2(\Omega)} \le C\e^{\frac{1}{2}-} \norm{u_0}_{W^{3,\infty}(\Omega)}.
	\end{equation*}
	Moreover, $v_{1,0}^{bl}$ is the solution of
	\begin{equation}\label{eq.vblD}
	\left\{
	\begin{aligned}
	\cL_0 v_{1,0}^{bl} & = F^* &\quad & \txt{in } \Omega, \\
	v_{1,0}^{bl} &= f^* &\quad & \txt{on } \partial\Omega,
	\end{aligned}
	\right.
	\end{equation}
	where $f^* \in W^{1,p}(\partial\Omega;\R^m)$ for any $p\in (1,\infty)$.
\end{theorem}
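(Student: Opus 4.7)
The plan is to decompose the problem into a standard interior piece and a pure boundary layer piece, handle the first by classical homogenization, and rely on the recent boundary layer theory for the second. Write $v_{1,\e}^{bl} = V_\e + W_\e$ where
\begin{equation*}
\left\{
\begin{aligned}
\cL_\e V_\e &= F^* &\quad& \text{in } \Omega,\\
V_\e &= 0 &\quad& \text{on } \partial\Omega,
\end{aligned}
\right.
\qquad
\left\{
\begin{aligned}
\cL_\e W_\e &= 0 &\quad& \text{in } \Omega,\\
W_\e &= -\chi_j(\cdot/\e)\,\partial_j u_0 &\quad& \text{on } \partial\Omega.
\end{aligned}
\right.
\end{equation*}
Since $F^* = -\bar{c}_{ijk}\,\partial_i\partial_j\partial_k u_0$ is smooth and independent of $\e$, the standard $L^2$ convergence rate in periodic homogenization with zero Dirichlet data gives $\|V_\e - V_0\|_{L^2(\Omega)} \le C\e\,\|u_0\|_{W^{3,\infty}(\Omega)}$, where $V_0$ solves $\cL_0 V_0 = F^*$ with $V_0 = 0$ on $\partial\Omega$. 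This rate is far better than $\e^{1/2-}$, so the interior piece is harmless.

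The substance is in the pure boundary layer piece $W_\e$, which carries the oscillating Dirichlet data $-\chi_j(\cdot/\e)\,\partial_j u_0$. Here I would invoke the Gérard-Varet--Masmoudi framework as refined in \cite{AKMP16,ShenZhuge16}: for $\Omega$ bounded, smooth and strictly convex, every outward unit normal $\nu(x)$ on $\partial\Omega$ is (away from a small measure set) Diophantine with respect to the periodicity lattice, so at each boundary point one can associate to the frozen tangent half-space problem with boundary value $-\chi_j(y)\,\partial_j u_0(x)$ a well-defined asymptotic value at infinity. This determines a boundary trace $f^*(x)$ on $\partial\Omega$, and the $W^{1,p}$ regularity of $f^*$ (for any $p<\infty$) follows from the smoothness of $\partial\Omega$, the smoothness of $u_0$, and the smooth parametric dependence of the half-space solutions on the tangent data and the (non-resonant) normal direction, as established in the cited works. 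Defining $W_0$ as the $\cL_0$-harmonic extension with boundary data $f^*$, the almost-optimal rate
\begin{equation*}
\|W_\e - W_0\|_{L^2(\Omega)} \le C\e^{1/2-}\,\|u_0\|_{W^{3,\infty}(\Omega)}
\end{equation*}
is exactly the content of the main convergence theorems of \cite{AKMP16,ShenZhuge16} (with the $d\ge 3$ rate as stated; the $d=2$ rate of \cite{GM12} is weaker but no longer needed).

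Setting $v_{1,0}^{bl} = V_0 + W_0$ gives the function claimed in the theorem; it solves (\ref{eq.vblD}) by linearity of $\cL_0$ and combines the interior source $F^*$ with the boundary trace $f^* \in W^{1,p}(\partial\Omega;\R^m)$. The triangle inequality then yields the advertised rate $\|v_{1,\e}^{bl} - v_{1,0}^{bl}\|_{L^2(\Omega)} \le C\e^{1/2-}\|u_0\|_{W^{3,\infty}(\Omega)}$.

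The main obstacle is of course the second piece: the boundary layer theory hides the hard work of passing from periodic oscillations on the curved boundary to an averaged trace. All the geometric hypotheses on $\Omega$ (strict convexity, smoothness) enter there, through the need to rule out resonant normals and to quantify the rate at which the half-space solutions stabilize. I would simply invoke this machinery as a black box, since rederiving it is the subject of the multi-paper sequence cited and is well beyond the scope of this summary statement.
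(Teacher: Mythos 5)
Your proposal is correct and matches the paper's treatment: the paper does not prove this theorem internally but simply cites \cite{AKMP16,ShenZhuge16,ShenZhuge17} for the $O(\e^{1/2-})$ rate, the explicit form of $f^*$, and its $W^{1,p}$ regularity, exactly the black box you invoke. The only thing you add is the straightforward splitting of $v_{1,\e}^{bl}$ into a smooth-source piece (handled by the classical $O(\e)$ rate) and a pure oscillating-data boundary layer piece, which is the correct, if routine, bookkeeping step to align with the cited theorems.
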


The proof of Theorem \ref{thm_blL2} are contained in \cite{AKMP16,ShenZhuge16,ShenZhuge17}. In particular, the nearly sharp rate of convergence $O(\e^{\frac{1}{2}-})$ was obtained in \cite{AKMP16} for $d\ge 4$ and \cite{ShenZhuge16} for $d =3$ (The sharp rate $O(\e^{\frac{1}{4}-})$ for $d=2$ was also obtained in \cite{ShenZhuge16}, which may be used to improve Prange's result (\ref{est.Prange}) in 2-dimensional case). The explicit formula for the homogenized data $f^*$ was first discovered in \cite{AKMP16} and the $W^{1,p}$ regularity with arbitrary $p\in (1,\infty)$ has been proved in \cite{ShenZhuge17}.

As a corollary of Theorem \ref{thm_blL2}, we have
\begin{theorem}\label{thm_L2NextOrder}
	Under the same assumptions as Theorem \ref{thm_blL2}, the solutions of (\ref{eq_Due}) and (\ref{eq_Du0}) satisfy
	\begin{equation}\label{est_high_order}
	\norm{u_\e - u_0 - \e\chi_j^\e \frac{\partial u_0}{\partial x_j} - \e v_{1,0}^{bl} }_{L^2(\Omega)} \le  C\e^{\frac{3}{2}-}\norm{u_0}_{W^{3,\infty}(\Omega)},
	\end{equation}
	where $v_{1,0}^{bl}$ is given in Theorem \ref{thm_blL2}.
\end{theorem}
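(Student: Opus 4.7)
The plan is to split the error into the two-scale corrected part plus the homogenization of the $\e$-dependent boundary layer. Define $v_{1,\e}^{bl}$ by \eqref{eq_bl} and set
\begin{equation*}
r_\e := u_\e - u_0 - \e \chi_j^\e \frac{\partial u_0}{\partial x_j} - \e v_{1,\e}^{bl}.
\end{equation*}
Since Theorem \ref{thm_blL2} controls $\norm{v_{1,\e}^{bl} - v_{1,0}^{bl}}_{L^2(\Omega)}$ by $C\e^{\frac{1}{2}-}\norm{u_0}_{W^{3,\infty}(\Omega)}$, the triangle inequality reduces the claim to the sharp estimate
\begin{equation*}
\norm{r_\e}_{L^2(\Omega)} \le C\e^{3/2}\norm{u_0}_{W^{3,\infty}(\Omega)}.
\end{equation*}

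The Dirichlet data built into \eqref{eq_bl} is exactly what makes $r_\e \in H^1_0(\Omega;\R^m)$, and the source $F^* = -\bar{c}_{ijk}\partial_i \partial_j \partial_k u_0$ in \eqref{eq_bl} is chosen so that, combined with the cell problem for $\chi_j^\beta$ and the second-order correctors $\varUpsilon_{ij}^\beta$, the residual can be rewritten in the form $\cL_\e r_\e = \e\, \partial_i(G_i^\e)$, with $G_i^\e$ depending linearly on $\nabla^2 u_0$ and bounded uniformly in $L^2(\Omega)$ by the smoothness of the correctors. The standard energy estimate for $\cL_\e$ with zero Dirichlet data then yields
\begin{equation*}
\norm{\nabla r_\e}_{L^2(\Omega)} \le C\e \norm{u_0}_{W^{3,\infty}(\Omega)}.
\end{equation*}

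The final and hardest step is to upgrade this $H^1_0$-rate of order $\e$ to an $L^2$-rate of order $\e^{3/2}$ by an Aubin--Nitsche-type duality argument: pair $r_\e$ against the solution $\eta_\e$ of the adjoint problem $\cL_\e^* \eta_\e = \varphi$ with $\varphi \in L^2(\Omega;\R^m)$, and expand $\eta_\e$ through its own first-order corrector and its own boundary-layer correction. Because $\Omega$ is smooth and strictly convex, the oscillating part of $\eta_\e$ is effectively concentrated in a strip of width $O(\e)$ along $\partial\Omega$, and it is this localization (together with interior regularity of the homogenized solution) that buys the extra $\e^{1/2}$ factor beyond the energy estimate. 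This is the crux of the proof and also where the $O(\e^{1/2-})$ boundary-layer rate from Theorem \ref{thm_blL2} is implicitly reused. Combining the $L^2$-bound on $r_\e$ with the replacement $v_{1,\e}^{bl}\to v_{1,0}^{bl}$ at cost $O(\e^{3/2-})$ then yields the asserted estimate \eqref{est_high_order}.
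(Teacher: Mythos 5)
Your setup is correct and matches the standard argument: the triangle-inequality reduction to $\norm{r_\e}_{L^2}\le C\e^{3/2}\norm{u_0}_{W^{3,\infty}}$ with $r_\e = u_\e-u_0-\e\chi^\e\nabla u_0-\e v_{1,\e}^{bl}$, the observation that $r_\e\in H^1_0(\Omega;\R^m)$ with $\cL_\e r_\e=\e\,\partial_i(G_i^\e)$ for $G_i^\e$ controlled in $L^2$ by $\nabla^2 u_0$, and the resulting energy estimate $\norm{\nabla r_\e}_{L^2}\le C\e\norm{u_0}_{W^{3,\infty}}$ are all right. The route diverges at the last step: the paper does not use a duality argument here. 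Instead it pushes the two-scale expansion one order further (Theorem~\ref{thm.ue2nd}): the $H^1$-error after including $\e^2\varUpsilon^\e\nabla^2 u_0$ and the second boundary layer $\e^2 v_{2,\e}^{bl}$ is $O(\e^2)$, so $r_\e=\e^2\varUpsilon^\e\nabla^2 u_0+\e^2 v_{2,\e}^{bl}+O_{H^1}(\e^2)$, and then the energy estimate $\norm{v_{2,\e}^{bl}}_{H^1(\Omega)}\le C\e^{-1/2}$ (from the $\e$-oscillating Dirichlet data $-\varUpsilon^\e\nabla^2 u_0$) immediately gives $\norm{r_\e}_{L^2}\le C\e^{3/2}$. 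This is explicit, requires no duality, and in particular requires no strict convexity.

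Your duality sketch for the same bound has concrete gaps. First, to expand $\nabla\eta_\e$ through its own corrector and boundary-layer correction you need regularity of the homogenized adjoint solution $\eta_0$ at the level of $W^{3,\infty}$ (that is what both the boundary-layer system~\eqref{eq_bl} and Theorem~\ref{thm_blL2} require of the data), but $\cL_0^*\eta_0=\varphi$ with $\varphi\in L^2$ only gives $\eta_0\in H^2(\Omega;\R^m)$; so the boundary-layer expansion of $\eta_\e$ is not available in the duality pairing. Second, you attribute the extra $\e^{1/2}$ gain to strict convexity and to the boundary-layer rate of Theorem~\ref{thm_blL2} being ``implicitly reused'', but this is misplaced: strict convexity and Theorem~\ref{thm_blL2} enter this theorem only once, in your first paragraph, to replace $v_{1,\e}^{bl}$ by $v_{1,0}^{bl}$ at cost $O(\e^{3/2-})$. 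The bound $\norm{r_\e}_{L^2}\le C\e^{3/2}$ itself holds on any smooth bounded domain and involves no boundary-layer homogenization. Finally, even setting the regularity issue aside, the cancellation you would need in $\e\Ag{G^\e,\nabla\eta_\e}$ is not shown: after substituting $\nabla\eta_\e\approx(I+\nabla\chi^{*\e})\nabla\eta_0$, the periodic product $G(\cdot/\e)(I+\nabla\chi^*(\cdot/\e))$ carries a nonzero mean in general, producing an $O(1)$ (not $o(1)$) contribution to $\Ag{G^\e,\nabla\eta_\e}$, and you do not explain why this term vanishes or is absorbed. As written the proposal does not deliver $\norm{r_\e}_{L^2}\le C\e^{3/2}$; the paper's second-order-expansion route avoids all of these difficulties.
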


Next, we consider Neumann problem
\begin{equation}\label{eq_Nue}
\left\{
\begin{aligned}
\cL_\e u_\e(x) &= F(x) &\quad & \txt{in } \Omega, \\
\frac{\partial}{\partial \nu_\e} u_\e(x) &= g(x) &\quad & \txt{on } \partial\Omega,
\end{aligned}
\right.
\end{equation}
where $\frac{\partial}{\partial \nu_\e}  = n_i a_{ij}^\e \frac{\partial}{\partial x_j}$ is the conormal derivative and $n = (n_1,n_2,\cdots,n_d)$ is the normal vector. Similar to Dirichlet problem, the solution $u_\e$ of Neumann problem (\ref{eq_Nue}) converges to some function $u_0$ in $\dot{L}^2(\Omega;\R^m)$ as $\e \to 0$, and $u_0$ is the solution of the homogenized Neumann problem,
\begin{equation}\label{eq_Nu0}
\left\{
\begin{aligned}
\cL_0 u_0(x) &= F(x) &\quad & \txt{in } \Omega, \\
\frac{\partial}{\partial \nu_0} u_0(x) &= g(x) &\quad & \txt{on } \partial\Omega,
\end{aligned}
\right.
\end{equation}
where $\frac{\partial}{\partial \nu_0} = n_i \hat{a}_{ij}\frac{\partial}{\partial x_j}$. Moreover, we have a formal asymptotic expansion for the solution $u_\e$ of (\ref{eq_Nue}),
\begin{align}\label{eq_Nasymp}
	\begin{aligned}	
		u_\e(x)  = u_0(x) + &\e \bigg[ \chi_j\Big(\frac{x}{\e}\Big) \frac{\partial }{\partial x_j} u_0(x) + \tilde{v}_{1,\e}^{bl}(x) \bigg] \\
		+ &\e^2 \bigg[ \varUpsilon_{ij}\Big(\frac{x}{\e}\Big) \frac{\partial^2 }{\partial x_i x_j} u_0(x) + \tilde{v}_{2,\e}^{bl}(x) \bigg] + \cdots,
	\end{aligned}
\end{align}
where $\tilde{v}_{n,\e}^{bl}$ is the $n$th-order boundary layer correction corresponding to a problem with oscillating Neumann boundary data. Again, we are only interested in the first-order correction $\tilde{v}_{1,\e}^{bl} \in \dot{L}^2(\Omega;\R^m)$, which is given by
\begin{equation}\label{eq_Nbl}
\left\{
\begin{aligned}
\cL_\e \tilde{v}_{1,\e}^{bl} & = F^*= -\bar{c}_{ijk} \frac{\partial^3 u_0(x)}{\partial x_i \partial x_j \partial x_k} &\quad & \txt{in } \Omega, \\
\frac{\partial}{\partial \nu_\e } \tilde{v}_{1,\e}^{bl} &= \frac{1}{2}T_{ij}\cdot \nabla \Big( b_{ijk}^\e \frac{\partial u_0}{\partial x_k} \Big) + n_j \bar{c}_{ijk} \frac{\partial^2 u_0}{\partial x_i \partial x_k} &\quad & \txt{on } \partial\Omega.
\end{aligned}
\right.
\end{equation}
where $T_{ij} = n_i e_j - n_j e_i$ and $b_{ijk}$ are 1-periodic functions satisfying
\begin{equation}\label{eq.Bijk}
\frac{\partial}{\partial x_i} b_{ijk} = a_{jk} + a_{j\ell} \frac{\partial}{\partial x_\ell} \chi_k - \hat{a}_{jk} \quad \txt{and} \quad b_{ijk} = - b_{ijk}.
\end{equation}
Note that $T_{ij}, 1\le i,j\le d$, are tangential vector fields on the boundary and thence (\ref{eq_Nbl}) satisfies the condition of compatibility for Neumann problem. The derivation for the system of $\tilde{v}_{1,\e}^{bl}$ is contained, for example, in the proof of \cite[Theorem 9.1]{ShenZhuge16}. 

Analogous to Theorem \ref{thm_blL2}, we have
\begin{theorem}\label{thm_NblL2}
	Let $A$ and $\Omega$ satisfy the same assumptions as Theorem \ref{thm_blL2} and $\tilde{v}_{1,\e}^{bl}$ be the solution of (\ref{eq_Nbl}). Then, there exists $\tilde{v}_{1,0}^{bl} \in \dot{L}^2(\Omega;\R^m)$ independent of $\e$ such that
	\begin{equation*}
	\norm{\tilde{v}_{1,\e}^{bl} - \tilde{v}_{1,0}^{bl}}_{L^2(\Omega)} \le C\e^{1/2 -} \norm{u_0}_{W^{3,\infty}(\Omega)}.
	\end{equation*}
	Moreover, $\tilde{v}_{1,0}^{bl}$ is the solution of
	\begin{equation}
	\left\{
	\begin{aligned}
	\cL_0 \tilde{v}_{1,0}^{bl} & = F^* &\quad & \txt{in } \Omega, \\
	\frac{\partial}{\partial \nu_0} \tilde{v}_{1,0}^{bl} &= g^* &\quad & \txt{on } \partial\Omega,
	\end{aligned}
	\right.
	\end{equation}
	where $g^* \in W^{1,p}(\partial\Omega;\R^m)$ for any $p\in (1,\infty)$.
\end{theorem}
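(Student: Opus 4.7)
The plan is to deduce Theorem \ref{thm_NblL2} from the Neumann boundary layer machinery developed in \cite{ShenZhuge16}, in direct parallel with how Theorem \ref{thm_blL2} (in the Dirichlet case) was obtained from the Dirichlet boundary layer results of \cite{AKMP16,ShenZhuge16,ShenZhuge17}. The first step is to verify that the compatibility condition $\int_\Omega F^* + \int_{\partial\Omega} g_\e = 0$ holds for the Neumann data
\begin{equation*}
g_\e(x) = \tfrac{1}{2} T_{ij}\cdot \nabla\bigl( b_{ijk}^\e \tfrac{\partial u_0}{\partial x_k}\bigr) + n_j \bar{c}_{ijk} \tfrac{\partial^2 u_0}{\partial x_i\partial x_k}.
\end{equation*}
Since $T_{ij}$ is tangential to $\partial\Omega$, the first term on the right-hand side is a tangential divergence and integrates to zero over $\partial\Omega$; the balance between $F^*$ and the smooth second term then reduces to an algebraic identity involving $\bar c_{ijk}$ and the antisymmetry $b_{ijk} = -b_{jik}$ from \eqref{eq.Bijk}, which is a standard fact from periodic homogenization.

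Next I would split $\tilde v_{1,\e}^{bl} = w_\e + r_\e$, where $w_\e$ collects the pieces that are not $\e$-oscillating: namely the interior source $F^*$ and the smooth Neumann term $n_j\bar{c}_{ijk}\partial^2_{ik}u_0$. Classical $O(\e)$ Neumann homogenization in $L^2$ (see \cite[Chapter 7]{ShenBook}) gives a limit $w_0$ solving a homogenized Neumann problem with smooth data, in particular $w_0 \in \dot{L}^2$ and its boundary flux is smooth. The genuinely oscillating piece $r_\e$ satisfies a Neumann problem for $\cL_\e$ with zero interior source and boundary datum $\tfrac12 T_{ij}\cdot\nabla(b_{ijk}^\e \partial_k u_0)$, which is exactly the type of first-order Neumann boundary layer treated in \cite[Section 9]{ShenZhuge16}. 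Applying their main quantitative theorem, which yields the nearly sharp convergence rate $O(\e^{1/2-})$ in $L^2(\Omega)$ on smooth strictly convex domains in dimension $d\ge 3$, one obtains a limit $r_0 \in \dot L^2(\Omega;\R^m)$ solving $\cL_0 r_0 = 0$ in $\Omega$ with some homogenized conormal datum $\tilde g^*$. Setting $\tilde v_{1,0}^{bl} := w_0 + r_0$ and $g^* := (\partial_{\nu_0}w_0) + \tilde g^*$ would finish the convergence statement.

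The last thing to establish is the regularity $g^* \in W^{1,p}(\partial\Omega;\R^m)$ for all $p \in (1,\infty)$. Since the $w_0$ contribution is smooth, only $\tilde g^*$ requires attention, and it is given by an explicit formula of the type appearing in \cite{AKMP16,ShenZhuge17}: for $x\in\partial\Omega$ the value $\tilde g^*(x)$ is obtained by integrating $\tfrac12 T_{ij}(x)\cdot\nabla(b_{ijk}(\cdot)\partial_k u_0(x))$ against the invariant measure of the half-space Neumann boundary layer problem in the direction $n(x)$. The $W^{1,p}$ regularity then follows by adapting the Dirichlet argument of \cite{ShenZhuge17} to the Neumann flux: one differentiates tangentially along $\partial\Omega$, uses the smoothness of the Gauss map of a strictly convex surface, and invokes maximal-function bounds on the invariant measure. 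The \emph{main obstacle} is exactly this last step: the conormal derivative couples the oscillation in $A$ with the normal direction, so the Neumann invariant measure is less transparent than its Dirichlet counterpart, and propagating arbitrary $p<\infty$ integrability of tangential derivatives through it requires the refined Diophantine/irrationality analysis of normals on a strictly convex boundary developed in \cite{ShenZhuge16,ShenZhuge17}. All other ingredients are either verbatim repetitions of the Dirichlet argument or standard classical homogenization, which is why the result can be stated in parallel with Theorem \ref{thm_blL2}.
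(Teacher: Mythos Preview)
Your proposal is reasonable, but you should be aware that the paper does not actually prove Theorem~\ref{thm_NblL2}: it is stated as a preliminary and attributed directly to \cite{ShenZhuge16} (for the convergence rate and the explicit formula of $g^*$) and \cite{ShenZhuge17} (for the $W^{1,p}$ regularity of $g^*$), with no argument given beyond those citations. So there is no ``paper's own proof'' to compare against; the theorem functions here as a black box imported from the boundary-layer literature.

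That said, your sketch is broadly in line with how the result is established in those references. The decomposition into a non-oscillating piece $w_\e$ (handled by classical $O(\e)$ Neumann homogenization) and a genuinely oscillating piece $r_\e$ (handled by the Neumann boundary layer theorem of \cite{ShenZhuge16}) is exactly the structure of \cite[Section~9]{ShenZhuge16}, and your identification of the $W^{1,p}$ regularity of $\tilde g^*$ as the nontrivial step, to be obtained by adapting the Dirichlet analysis of \cite{ShenZhuge17}, is accurate. One minor point: the compatibility check you describe is correct in spirit, but in \cite{ShenZhuge16} it is built into the derivation of \eqref{eq_Nbl} rather than verified a posteriori, so you need not present it as a separate step. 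In short, your plan reconstructs the cited proofs rather than the present paper's, which simply quotes the result.
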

In the above theorem, the rate $O(\e^{\frac{3}{2}-})$ and the explicit formula of the homogenized data $g^*$ were obtained in \cite{ShenZhuge16}. The $W^{1,p}$ regularity of $g^*$ with any $p\in (1,\infty)$ was proved in \cite{ShenZhuge17}.

As a corollary, we have
\begin{theorem}[\cite{ShenZhuge16}, Thoerem 9.1]\label{thm_NL2NextOrder}
	Let $A$ and $\Omega$ satisfy the same assumptions as Theorem \ref{thm_blL2}. Then, the solutions of (\ref{eq_Nue}) and (\ref{eq_Nu0}) satisfy
	\begin{equation}\label{est_NHorder}
	\norm{u_\e - u_0 - \e\chi_j^\e \frac{\partial u_0}{\partial x_j} - \e \tilde{v}_{1,0}^{bl}}_{L^2(\Omega)} \le C\e^{\frac{3}{2}-}\norm{u_0}_{W^{3,\infty}(\Omega)},
	\end{equation}
	where $\tilde{v}_{1,0}^{bl}$ is given in Theorem \ref{thm_NblL2}.
\end{theorem}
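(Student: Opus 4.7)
The plan is to run the standard two-scale expansion argument for the Neumann problem, but with the oscillating boundary layer $\tilde{v}_{1,\e}^{bl}$ already built into the ansatz, and then invoke Theorem \ref{thm_NblL2} at the very last step to swap $\tilde{v}_{1,\e}^{bl}$ for its limit $\tilde{v}_{1,0}^{bl}$. The exponent $\tfrac{3}{2}-$ in (\ref{est_NHorder}) arises as the sum of three contributions of matching order: an $O(\e^2)$ interior residual from truncating the asymptotic series at the second-order corrector, an $O(\e^{3/2})$ error from the Neumann energy estimate applied to the residual, and an $\e \cdot O(\e^{1/2-}) = O(\e^{3/2-})$ error from Theorem \ref{thm_NblL2}.

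Concretely, I would set
\begin{equation*}
	w_\e := u_\e - u_0 - \e\chi_j^\e \frac{\partial u_0}{\partial x_j} - \e^2 \varUpsilon_{ij}^\e \frac{\partial^2 u_0}{\partial x_i \partial x_j} - \e\, \tilde{v}_{1,\e}^{bl},
\end{equation*}
and compute $\cL_\e w_\e$ in $\Omega$ as well as the conormal derivative $\partial w_\e / \partial \nu_\e$ on $\partial \Omega$. A standard two-scale computation using the cell problems for $\chi$ and $\varUpsilon$ gives
\begin{equation*}
	\cL_\e \bigg(u_0 + \e\chi_j^\e \frac{\partial u_0}{\partial x_j} + \e^2 \varUpsilon_{ij}^\e \frac{\partial^2 u_0}{\partial x_i \partial x_j}\bigg) = F - \e \bar{c}_{ijk} \frac{\partial^3 u_0}{\partial x_i \partial x_j \partial x_k} + O(\e^2)
\end{equation*}
in $L^2(\Omega;\R^m)$, while the interior equation in (\ref{eq_Nbl}) is engineered precisely so that $\cL_\e(\e\, \tilde{v}_{1,\e}^{bl})$ cancels the $\e$-order middle term. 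Hence $\cL_\e w_\e = O(\e^2)$ in $L^2$.

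The boundary is the delicate part. Applying $\partial/\partial \nu_\e$ to the ansatz and using (\ref{eq.Bijk}) rewrites the oscillating flux generated by $\e\chi_j^\e \partial u_0/\partial x_j$ as a divergence involving the flux corrector $b^\e_{ijk}$; the antisymmetry of $b_{ijk}$ in its first two indices, together with $T_{ij} = n_i e_j - n_j e_i$, recasts the offending term as the tangential divergence $\tfrac{1}{2}T_{ij}\cdot\nabla(b^\e_{ijk}\, \partial u_0/\partial x_k)$ on $\partial\Omega$, matching exactly the first half of the conormal data prescribed for $\tilde{v}_{1,\e}^{bl}$ in (\ref{eq_Nbl}); the remaining boundary term $n_j \bar{c}_{ijk}\, \partial^2 u_0/\partial x_i \partial x_k$ accounts for the conormal trace contributed by the second-order corrector. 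Consequently $\partial w_\e/\partial\nu_\e$ reduces to a remainder small enough in a suitable negative Sobolev norm on $\partial\Omega$ that the $\e$-uniform Neumann energy estimate for $\cL_\e$ yields $\norm{w_\e}_{L^2(\Omega)} \le C\e^{3/2-}\norm{u_0}_{W^{3,\infty}(\Omega)}$. Discarding the $O(\e^2)$ second-order corrector term and finally invoking Theorem \ref{thm_NblL2} to replace $\tilde{v}_{1,\e}^{bl}$ by $\tilde{v}_{1,0}^{bl}$ produces (\ref{est_NHorder}).

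The main obstacle is the boundary identity itself: one must verify that the conormal trace of the two-scale ansatz is asymptotic, with remainder controllable in $H^{-1/2}(\partial\Omega)$, to the data (\ref{eq_Nbl}) imposed on $\tilde{v}_{1,\e}^{bl}$. This is the Neumann analogue of the Dirichlet boundary matching and relies crucially on the antisymmetry of $b_{ijk}$, which both makes the Neumann compatibility condition automatic and rewrites the residual flux as a tangential divergence of the right order. Once that algebraic identity is in place, the remaining steps are routine applications of the Neumann energy estimate combined with Theorem \ref{thm_NblL2}.
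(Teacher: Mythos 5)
The paper does not actually prove Theorem \ref{thm_NL2NextOrder}; it states it as a direct citation to \cite[Theorem 9.1]{ShenZhuge16} and treats it as a corollary of Theorem \ref{thm_NblL2}, exactly parallel to how the Dirichlet version (Theorem \ref{thm_L2NextOrder}) is deduced from Theorem \ref{thm_blL2}, and to the sketch the paper does give for the Dirichlet $H^1$ estimate in Theorem \ref{thm.ue2nd}. Your proposal reconstructs precisely that standard argument: two-scale ansatz with $\chi$, $\varUpsilon$, and $\tilde v^{bl}_{1,\e}$; interior residual $O(\e^2)$; boundary matching via the antisymmetric flux corrector $b_{ijk}$; energy estimate; then a triangle inequality with Theorem \ref{thm_NblL2} supplying the dominant $\e\cdot O(\e^{1/2-})=O(\e^{3/2-})$ term. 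That is the right route and the same one the cited reference uses.

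Two small remarks. First, you omit the second-order Neumann boundary layer $\tilde v^{bl}_{2,\e}$ from the ansatz, whereas the reference and the paper's Dirichlet sketch (Theorem \ref{thm.ue2nd}) include it and then discard it afterwards. Omitting it is fine for an $L^2$ conclusion, but you then need to verify explicitly that the leftover flux from $\e^2\varUpsilon^\e\nabla^2 u_0$, which is $O(\e)$ and oscillating, is mean-zero in the fast variable after subtracting the $n_j\bar c_{ijk}\,\partial^2 u_0/\partial x_i\partial x_k$ term, so that it gains an extra $\e^{1/2}$ in $H^{-1/2}(\partial\Omega)$ and contributes only $O(\e^{3/2})$ through the energy estimate; as written, you assert this rather than check it. Second, your heuristic that $n_j\bar c_{ijk}\,\partial^2 u_0/\partial x_i\partial x_k$ is ``the conormal trace contributed by the second-order corrector'' is loose: that term is better understood as the nonoscillating mean of the $O(\e)$ flux residual, inserted so that the Neumann data in (\ref{eq_Nbl}) satisfies the compatibility condition with $F^*$. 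Neither point changes the validity of the approach, and the exponent bookkeeping comes out correctly: the bottleneck is Theorem \ref{thm_NblL2}, and everything else is $O(\e^{3/2})$ or better.
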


Now observe that for arbitrary smooth $u_0$, the mappings $u_0 \mapsto  v_{1,0}^{bl}$ in Theorem \ref{thm_blL2} and $u_0 \mapsto \tilde{v}_{1,0}^{bl}$ in Theorem \ref{thm_NblL2} are both linear and bounded from $W^{3,\infty}(\Omega;\R^m)$ to $L^2(\Omega;\R^m)$. Thus, one can define bounded linear operators
\begin{equation}\label{def_K1}
K^{bl} u_0 = v_{1,0}^{bl},
\end{equation}
where $ v_{1,0}^{bl}$ is given by Theorem \ref{thm_blL2} for Dirichlet problem, and
\begin{equation}\label{def_tK1}
\widetilde{K}^{bl} u_0 = \tilde{v}_{1,0}^{bl},
\end{equation}
where $\tilde{v}_{1,0}^{bl}$ is given by Theorem \ref{thm_NblL2} for Neumann problem.
\begin{remark}\label{rmk_ftype}
	The results stated above are for strictly convex domains, while actually similar results are valid for domains of finite type, at least for Dirichlet problem, with the rate $O(\e^{\frac{3}{2}-})$ replaced by $O(\e^{1+\frac{1}{2}\alpha^*-})$ for some $\alpha^*\in (0,1]$ depending explicitly on $d$ and the type of $\Omega$; see \cite[Theorem 1.3]{Zhuge17}.
\end{remark}

\section{Expansion of Dirichlet Eigenvalues} 
This section is devoted to the proof of Theorem \ref{thm_lambda}. Let $A$ and $\Omega$ satisfy the assumptions of Theorem \ref{thm_lambda}. Recall the definitions of $T_\e$ and $T_0$ in (\ref{eq_Leue}) and (\ref{eq_L0u0}). Clearly, the family of operators $\{ T_\e: \e > 0 \}$ is uniformly bounded from $L^2(\Omega;\R^m)$ to $H_0^1(\Omega;\R^m)$ with respect to $\e$ and hence collectively compact on $L^2(\Omega;\R^m)$, i.e., the set $\{T_\e f: \norm{f}_{L^2(\Omega)} \le 1, \e > 0 \}$ is precompact in $L^2(\Omega;\R^m)$. Similarly, $T_0$ is bounded from $L^2(\Omega;\R^m)$ to $H_0^1 \cap H^2 (\Omega;\R^m)$ and therefore compact as an operator on $L^2(\Omega;\R^m)$. Moreover, by the classical homogenization theorem (see, e.g., \cite{ShenBook}), we have
\begin{equation}\label{est_L2rate}
\norm{T_\e - T_0}_{L^2(\Omega) \to L^2(\Omega)} \le C\e,
\end{equation}
where $C$ depends only on $A$ and $\Omega$.

In view of our setting, we note that the reciprocal of Dirichlet eigenvalues of $\cL_\e$, $\{\lambda_{\e,k}^{-1}: k\ge 1\}$, forms the sequence of all the eigenvalues of $T_\e$ in a decreasing order, with the same corresponding eigenfunctions, i.e., $T_\e \phi_{\e,k} = \lambda_{\e,k}^{-1} \phi_{\e,k}$ for every $k\ge 1$. Similarly, $\{\lambda_{0,k}^{-1}: k\ge 1\}$ is the sequence of eigenvalues of $T_0$ with the same corresponding eigenfunctions for each $k\ge 1$. For simplicity, throughout this paper, we define $\mu_{\e,k} = \lambda_{\e,k}^{-1}$ and $\mu_{0,k} =\lambda_{0,k}^{-1} $.

\begin{proof}[Proof of Theorem \ref{thm_lambda}]
	By the assumptions, $\mu_0 := \mu_{0,L} = \mu_{0,L+1} = \cdots = \mu_{0,L+M-1}$ is an eigenvalue of $T_0$ with multiplicity $M\ge 1$. Let $S_0 = S_0(\mu_0)$ be the spectral projection onto the the eigenspace of $T_0$ corresponding to $\mu_0$. By (\ref{cdn_lambda}), it is not hard to see that for sufficiently small $\e>0$, there exist exactly $M$ consecutive eigenvalues of $T_\e$, $\{ \mu_{\e,L+j}: j=0,1,\cdots, M-1\}$, such that $\mu_{\e,L+j}$ converges to $\mu_0$ for each $j$. Define
	\begin{equation}\label{eq_barmu}
	\bar{\mu}_\e = \frac{\mu_{\e,L} + \mu_{\e,L+1}+ \cdots + \mu_{\e,L+M-1}}{M}.
	\end{equation}
	It follows from Osborn's theorem in \cite[Theorem 3.1]{MV97} (or the proof of \cite[Theorem 3]{Os75}) and (\ref{est_L2rate}) that
	\begin{align}\label{est_mue_mu}
		\begin{aligned}
			&\Big|\bar{\mu}_\e -\mu_0 - \frac{1}{M} \Ag{(T_\e-T_0) \phi_{0,L+j},\phi_{0,L+j} } \Big| \\
			&  \le C \norm{T_\e-T_0}_{\mathcal{R}(S_0) \to L^2(\Omega)}^2 \\ 
			& \le  C\e^2,
		\end{aligned}
	\end{align}
	where the eigenfunctions $\{\phi_{0,L+j}: j =0,1,\cdots, M-1\}$ forms an (arbitrary) orthonormal basis of the eigenspace $\mathcal{R}(S_0)$. Note that the constant $C$ may depend on the eigenvalue $\mu_0$.
	
	To proceed, we need a lemma on the expansion of $T_\e$.
	\begin{lemma}\label{lem_TeT}
		Let $K^{bl}$ be the operator defined by (\ref{def_K1}). Then,
		\begin{equation*}
		\norm{T_\e - T_0 - \e \chi^\e_\ell \frac{\partial}{\partial x_\ell}  T_0 - \e K^{bl} T_0}_{\mathcal{R}(S_0) \to L^2(\Omega)} \le C\e^{\frac{3}{2}-}.
		\end{equation*}
	\end{lemma}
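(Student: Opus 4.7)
The plan is to reduce the statement to a direct application of Theorem \ref{thm_L2NextOrder} combined with elliptic regularity on the finite-dimensional eigenspace $\mathcal{R}(S_0)$. For any $f \in \mathcal{R}(S_0)$, I would set $u_\e = T_\e f$ and $u_0 = T_0 f$; these are precisely the solutions of (\ref{eq_Due}) and (\ref{eq_Du0}) with right-hand side $F = f$ and vanishing Dirichlet data, so the first-order homogenized boundary-layer corrector $v^{bl}_{1,0}$ appearing in Theorem \ref{thm_L2NextOrder} agrees with $K^{bl} T_0 f$ by the definition (\ref{def_K1}). Theorem \ref{thm_L2NextOrder} therefore yields
\begin{equation*}
\norm{T_\e f - T_0 f - \e \chi_\ell^\e \frac{\partial T_0 f}{\partial x_\ell} - \e K^{bl} T_0 f}_{L^2(\Omega)} \le C \e^{\frac{3}{2}-} \norm{T_0 f}_{W^{3,\infty}(\Omega)}.
\end{equation*}

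The next step is to control $\norm{T_0 f}_{W^{3,\infty}(\Omega)}$ by $\norm{f}_{L^2(\Omega)}$ uniformly for $f \in \mathcal{R}(S_0)$. Since $T_0$ restricted to $\mathcal{R}(S_0)$ is multiplication by $\mu_0 = \lambda_0^{-1}$, one has $T_0 f = \mu_0 f \in \mathcal{R}(S_0)$, so it suffices to observe that each eigenfunction $\phi_{0,L+j}$ lies in $C^\infty(\overline{\Omega})$ by classical elliptic regularity (constant-coefficient $\widehat{A}$, smooth $\partial\Omega$, zero Dirichlet data), and that all norms are equivalent on the $M$-dimensional space $\mathcal{R}(S_0)$. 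This gives $\norm{T_0 f}_{W^{3,\infty}(\Omega)} \le C \norm{f}_{L^2(\Omega)}$ with $C$ depending only on $\lambda_0$, $A$, and $\Omega$.

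Combining these two ingredients will yield the desired operator-norm bound. I do not expect any genuine obstacle here: the lemma is essentially a packaging of Theorem \ref{thm_L2NextOrder} together with the observation that the $W^{3,\infty}$-norm on its right-hand side is harmless once one restricts to the smooth finite-dimensional subspace $\mathcal{R}(S_0)$. The only detail worth verifying with some care is that the operator $K^{bl}$ defined by (\ref{def_K1}) matches exactly the quantity $v^{bl}_{1,0}$ produced by Theorem \ref{thm_blL2} when applied with homogenized data $u_0 = T_0 f$, so that the error term and the asserted first-order correction line up without any hidden constant.
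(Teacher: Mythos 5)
Your proposal is correct and follows essentially the same route as the paper: apply Theorem \ref{thm_L2NextOrder} with $u_\e = T_\e f$, $u_0 = T_0 f$ for $f\in\mathcal{R}(S_0)$, identify $v^{bl}_{1,0}$ with $K^{bl}T_0 f$ via (\ref{def_K1}), and then absorb the $\norm{T_0 f}_{W^{3,\infty}(\Omega)}$ factor using that $T_0 f = \mu_0 f$, the smoothness of the eigenfunctions, and the equivalence of norms on the finite-dimensional space $\mathcal{R}(S_0)$. The paper phrases the last step slightly differently (boundedness of the identity embedding $\mathcal{R}(S_0)\subset W^{3,\infty}(\Omega;\R^m)$, then applying the theorem to each $\phi_{0,L+j}$), but the content is the same.
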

	Lemma \ref{lem_TeT} is a simple corollary of Theorem \ref{thm_L2NextOrder}. In fact, since $\Omega$ is smooth, the normalized eigenfunctions $\phi_{0,L+j} \in \mathcal{R}(S_0)$ are smooth and satisfy
	\begin{equation*}
	\norm{\nabla^k \phi_{0,L+j}}_{L^\infty(\Omega)} \le C_k,
	\end{equation*}
	for all $j=0,1,\cdots,M-1$, where $C$ depends on $\mu_0,k,A$ and $\Omega$. This implies that the identical embedding $\mathcal{R}(S_0) \subset W^{3,\infty}(\Omega;\R^m)$ is bounded. Now, fix $j$ and set $u_\e = T_\e \phi_{L+j}$ and $u_0 = T_0 \phi_{0,L+j}$. Then, Theorem \ref{thm_L2NextOrder} implies
	\begin{equation*}
	\norm{(T_\e - T_0 - \e \chi^\e_\ell \frac{\partial}{\partial x_\ell} T_0 - \e K^{bl} T_0) \phi_{0,L+j} }_{L^2(\Omega)} \le C\e^{\frac{3}{2}-}.
	\end{equation*}
	This proves the lemma.
	
	Next, we prove the following claim: there exists a fixed number $\gamma$ independent of $\e$ such that
	\begin{equation}\label{est.mue-mu0}
	| \bar{\mu}_\e - \mu_0 - \e \gamma | \le C \e^{\frac{3}{2}-},
	\end{equation}
	where $\gamma$ is given by
	\begin{equation}
	\gamma = \frac{\mu_0}{M} \Ag{K^{bl}\phi_{0,L+j}, \phi_{0,L+j}},
	\end{equation}
	and $K^{bl}$ is defined by (\ref{def_K1}).	
	
	To see this, first of all, it follows from (\ref{est_mue_mu}) and Lemma \ref{lem_TeT} that
	\begin{equation*}
	\big| \bar{\mu}_\e - \mu_0 - \frac{\e}{M}\Ag{\chi^\e_\ell \frac{\partial}{\partial x_\ell} T_0 \phi_{0,L+j} + K^{bl} T_0 \phi_{0,L+j}, \phi_{0,L+j}} \big| \le C\e^{\frac{3}{2}-}.
	\end{equation*}
	We then show that $|\Ag{\chi^\e_\ell \frac{\partial}{\partial x_\ell}  T_0 \phi_{0,L+j}, \phi_{0,L+j}}| \le C\e.$ Actually, since $\chi_\ell(y)$ is smooth, 1-periodic and of zero mean value, we can find a smooth function $B_\ell(y)$ so that $-\Delta B_\ell = \chi_\ell$. Thus, by the fact $T_0 \phi_{0,L+j} = \mu_0 \phi_{0,L+j}$, we have
	\begin{align*}
		|\Ag{\chi^\e_\ell \frac{\partial}{\partial x_\ell}  T_0 \phi_{0,L+j}, \phi_{0,L+j}}| & = \bigg| \int_{\Omega} \e^2 \Delta[B_\ell(x/\e)] \mu_0 \frac{\partial}{\partial x_\ell} \phi_{0,L+j}(x) \phi_{0,L+j}(x) dx \bigg| \\
		& = \bigg| \int_{\Omega} \e \nabla　B_\ell(x/\e) \cdot \nabla \bigg(\mu_0 \frac{\partial}{\partial x_\ell} \phi_{0,L+j}(x) \phi_{0,L+j}(x)\bigg) dx \bigg| \\
		& \le C \e \sum_{j=0}^{M-1} \norm{\phi_{0,L+j}}^2_{H^2(\Omega)} \le C\e,
	\end{align*}
	As a consequence, we obtain
	\begin{equation*}
	\big| \bar{\mu}_\e - \mu_0 - \frac{\e\mu_0}{M }\Ag{K^{bl}\phi_{0,L+j}, \phi_{0,L+j}} \big| \le C \e^{\frac{3}{2}-}.
	\end{equation*}
	It is important to note that $\Ag{K^{bl} \phi_{0,L+j}, \phi_{0,L+j}}$ is independent of the choice of the orthonormal basis $\{\phi_{0,L+j}: j=0,1,\cdots,M-1\}$. Therefore, we have proved the claim.

	Finally, we show that (\ref{est.mue-mu0}) implies (\ref{est.lambe}). Recall that $\mu_0 = \lambda_0^{-1}$ and $\mu_{\e,j} = \lambda_{\e,L+j}^{-1}$, and (\ref{cdn_lambda}) gives
	\begin{equation*}
	|\lambda_{\e,L+j} - \lambda_0| \le C\e, \qquad 0\le j\le M-1.
	\end{equation*}
	Observe that
	\begin{align*}
		\bar{\mu}_\e - \mu_0 & = \frac{\sum\limits_{j=0}^{M-1} (\lambda_0 - \lambda_{L+j}^\e) \prod\limits_{s\neq j} \lambda_{L+s}^\e }{ M \lambda_0 \prod\limits_{j=0}^{M-1} \lambda_{L+j-1}^\e } \\
		&= \frac{\sum\limits_{j=0}^{M-1} (\lambda_0 - \lambda_{L+j}^\e)}{M \lambda_0^2} + O(\e^2) \\
		& = \frac{\lambda_0 - \bar{\lambda}_\e}{\lambda_0^2} + O(\e^2),
	\end{align*}
	where $\bar{\lambda}_\e = M^{-1} \sum_{j=0}^{M-1} \lambda_{\e,L+j}$. This together with (\ref{est.mue-mu0}) implies
	\begin{equation*}
	\bar{\lambda}_\e - \lambda_0 = - \e \lambda^2_0 \gamma + O(\e^{\frac{3}{2}-}).
	\end{equation*}
	The theorem follows by letting $\theta = -\lambda_0^2 \gamma$.
\end{proof}

\section{Expansion of Dirichlet Eigenfunctions}
In this section, we will concentrate on the first-order expansion for spectral projections (or eigenspaces) of $T_\e$ or $\cL_\e$. Let $\mu_0$ be an eigenvalue of $T_0$ with multiplicity $M$ and $\mu_{\e,L+j}$ with $ 0\le j\le M-1$ be the eigenvalues of $T_\e$ that converge to $\mu_0$ as $\e \to 0$. Let $\{\phi_{0,L+j}: j=0,1,\cdots, M-1\}$ be an orthonormal basis of the eigenspace of $T_0$ corresponding to $\mu_0$ and let $\{ \phi_{\e,L+j}: j=0,1,\cdots,M-1 \}$ be the orthonormal eigenfunctions of $T_\e$ corresponding to $\{\mu_{\e,L+j}: j=0,1,\cdots,M-1\}$.

Suppose $\Gamma \subset \C$ is a circle centered at $\mu_0$ with fixed radius such that the only eigenvalue of $T_0$ enclosed by $\Gamma$ is $\mu_0$ and the only eigenvalues of $T_\e$ enclosed are exactly $\{ \mu_{\e,L+j}: 0\le j\le M-1\}$. It is harmless and crucial to assume that the distance form $\mu_{\e,L+j}$ to $\Gamma$ is uniformly bounded below. This implies that the resolvents $(z-T_0)^{-1}$ and $(z-T_\e)^{-1}$ are bounded on $L^2(\Omega;\R^m)$ uniformly for any $z\in \Gamma$ and sufficiently small $\e$. By the theory of Riesz functional calculus, the spectral projections defined in (\ref{eq.S0}) and (\ref{eq.Se}) can be expressed by
\begin{equation*}
S_0f = \frac{1}{2\pi i} \int_{\Gamma} (z-T_0)^{-1} f dz,
\end{equation*}
and
\begin{equation*}
S_\e f = \frac{1}{2\pi i} \int_{\Gamma} (z-T_\e)^{-1} f dz.
\end{equation*}
Some basic properties about the projections are listed below.
\begin{proposition}\label{prop_E}
	Let $S_\e,S_0$ be defined as above. Then,
	
	(i) $S_0^2 = S_0 = S_0^*$ and $S_\e^2 = S_\e = S_\e^*$;
	
	
	(ii) The projection $S_0$ is bounded from $L^2(\Omega;\R^m)$ to Sobolev space $H^k(\Omega;\R^m)$ for any $k\ge 0$;
	
	(iii) The projection $S_0$ can be extended naturally to a bounded linear operator on $H^{-s}(\Omega;\R^m)$ such that $S_0$ is bounded from $H^{-s}(\Omega;\R^m)$ to $H^k(\Omega;\R^m)$ for arbitrary $s,k \ge 0$.
\end{proposition}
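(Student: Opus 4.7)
The plan is to leverage the fact that both $S_0$ and $S_\e$ are orthogonal projections onto finite-dimensional eigenspaces of the self-adjoint operators $T_0,T_\e$, together with interior and boundary regularity for the constant-coefficient homogenized operator. I will use only the explicit representations (\ref{eq.S0})--(\ref{eq.Se}), the symmetry hypothesis $A=A^*$ (which, via $\widehat{A}=\widehat{A}^*$, makes $T_0$ and $T_\e$ self-adjoint on $L^2(\Omega;\R^m)$ and thereby guarantees the orthonormal eigenbases implicit in those formulas), and classical elliptic regularity.

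For part (i), both identities collapse to one-line computations once orthonormality of the eigenbasis is granted. Expanding $S_0^2 f$ into the double sum $\sum_{j,k}\langle f,\phi_{0,L+j}\rangle\langle \phi_{0,L+j},\phi_{0,L+k}\rangle \phi_{0,L+k}$ and inserting $\langle \phi_{0,L+j},\phi_{0,L+k}\rangle=\delta_{jk}$ recovers $S_0 f$; self-adjointness $\langle S_0 f,g\rangle=\langle f,S_0 g\rangle$ is immediate from symmetry of the real $L^2$ pairing. The same argument applies verbatim to $S_\e$. For part (ii), I would feed the eigenfunction equation $\cL_0 \phi_{0,L+j}=\lambda_0 \phi_{0,L+j}$ in $\Omega$ with $\phi_{0,L+j}|_{\partial\Omega}=0$ into the standard regularity bootstrap for constant-coefficient elliptic divergence-form systems on smooth domains, obtaining $\phi_{0,L+j}\in C^\infty(\overline{\Omega})$ and in particular $\|\phi_{0,L+j}\|_{H^k(\Omega)}<\infty$ for every $k\ge 0$. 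Since $\mathcal{R}(S_0)$ is the $M$-dimensional span of these smooth eigenfunctions, applying Cauchy--Schwarz to the finite sum yields $\|S_0 f\|_{H^k(\Omega)}\le C_k\|f\|_{L^2(\Omega)}$ at once.

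Part (iii) is the step I expect to require the most care, though it contains no new PDE content beyond (i) and (ii); the only issue is to choose an appropriate duality pairing so that the formula $S_0 f=\sum_{j=0}^{M-1}\langle f,\phi_{0,L+j}\rangle \phi_{0,L+j}$ continues to define the natural extension of $S_0$ on $H^{-s}(\Omega;\R^m)$. Since each $\phi_{0,L+j}\in C^\infty(\overline{\Omega})$ by part (ii), each functional $f\mapsto \langle f,\phi_{0,L+j}\rangle$ extends canonically to a bounded linear functional on $H^{-s}(\Omega;\R^m)$ with norm controlled by $\|\phi_{0,L+j}\|_{H^s(\Omega)}$, so
\begin{equation*}
\|S_0 f\|_{H^k(\Omega)}\le \sum_{j=0}^{M-1}|\langle f,\phi_{0,L+j}\rangle|\cdot \|\phi_{0,L+j}\|_{H^k(\Omega)}\le C_{k,s}\|f\|_{H^{-s}(\Omega)}.
\end{equation*}
Equivalently and more intrinsically, one can dualize part (ii) at the level $k=s$ to obtain a bounded extension $S_0=S_0^*:H^{-s}\to L^2$ and then compose with $S_0:L^2\to H^k$ from (ii), invoking idempotency $S_0=S_0\circ S_0$ from (i); this avoids any ad hoc reference to test functions. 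Either way, the remaining subtlety is purely notational --- committing to a convention of $H^{-s}$ under which smooth functions on $\overline{\Omega}$ are admissible testers.
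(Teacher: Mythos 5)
Your proof is correct and follows essentially the same route as the paper: the paper likewise skips part (i) as standard, proves (ii) by observing that the explicit finite-rank formula $S_0 f=\langle f,\phi_{0,L+j}\rangle\phi_{0,L+j}$ together with smoothness of the eigenfunctions gives $L^2\to H^k$ boundedness, and proves (iii) by replacing the $L^2$ inner product with the $H^{-s}(\Omega)\times H^s(\Omega)$ duality pairing, exactly as you do. Your observation that (iii) can alternatively be obtained by dualizing (ii) to get $S_0^*=S_0:H^{-s}\to L^2$ and then composing with $S_0:L^2\to H^k$ via idempotency is a tidy reformulation of the same fact, and your caveat about the choice of convention for $H^{-s}$ (one must use a version in which functions in $C^\infty(\overline{\Omega})$ are admissible test functions, i.e., the dual of $H^s(\Omega)$ rather than of $H^s_0(\Omega)$) is a legitimate point that the paper glosses over in the same way.
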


\begin{proof}
	Part (ii) follows from the fact that $S_0 f = \Ag{f,\phi_{0, L+j}}\phi_{0, L+j}$ and $\phi_{0, L+j}$ are smooth. To see part (iii), note that $\Ag{f,\phi_{0, L+j}}$ can be naturally extended to the pair action $\Ag{f,\phi_{0, L+j}}_{H^{-s}(\Omega) \times H^s(\Omega)}$ for any $s\ge 0$. Thus
	\begin{equation*}
	S_0 f = \Ag{f,\phi_{0, L+j}}_{H^{-s}(\Omega) \times H^s(\Omega)} \phi_{0, L+j},
	\end{equation*}
	defines a bounded linear operator from $H^{-s}(\Omega;\R^m)$ to $H^k(\Omega;\R^m)$.
\end{proof}

Then, we have the following zero-order expansion for the spectral projection $S_\e$ that will be used later.
\begin{lemma}\label{lem_EeEL2}
	For sufficiently small $\e > 0$, it holds
	\begin{equation*}
	\norm{S_\e - S_0}_{L^2(\Omega) \to L^2(\Omega)}  \le C\e,
	\end{equation*}
	where $C$ depends only on $\mu_0,A$ and $\Omega$.
\end{lemma}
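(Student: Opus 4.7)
The plan is to use Riesz functional calculus together with the uniform $L^2 \to L^2$ convergence rate \eqref{est_L2rate} for the resolvent-like operators $T_\e \to T_0$. Writing
\begin{equation*}
S_\e - S_0 = \frac{1}{2\pi i}\int_\Gamma \bigl[(z-T_\e)^{-1} - (z-T_0)^{-1}\bigr]\,dz,
\end{equation*}
I would apply the second resolvent identity
\begin{equation*}
(z-T_\e)^{-1} - (z-T_0)^{-1} = (z-T_\e)^{-1}(T_\e - T_0)(z-T_0)^{-1}
\end{equation*}
to reduce the estimate to a uniform bound on the resolvents along $\Gamma$ together with the known operator-norm estimate $\norm{T_\e-T_0}_{L^2\to L^2}\le C\e$.

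The key observation (already noted in the excerpt) is that $\Gamma$ is a fixed circle around $\mu_0$ chosen so that $\mathrm{dist}(\Gamma,\sigma(T_0))$ and $\mathrm{dist}(\Gamma,\{\mu_{\e,L+j}\})$ are both bounded below by a positive constant independent of small $\e$. Since $T_0$ is self-adjoint and compact on $L^2(\Omega;\R^m)$, $\norm{(z-T_0)^{-1}}_{L^2\to L^2}\le 1/\mathrm{dist}(z,\sigma(T_0))$ is uniformly bounded on $\Gamma$; the analogous bound for $\norm{(z-T_\e)^{-1}}_{L^2\to L^2}$ follows from the self-adjointness of $T_\e$ and the fact that, for sufficiently small $\e$, the only spectrum of $T_\e$ inside $\Gamma$ is $\{\mu_{\e,L+j}\}$, which stays uniformly away from $\Gamma$ by the convergence $\mu_{\e,L+j}\to\mu_0$.

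Combining these, for every $z\in\Gamma$,
\begin{equation*}
\bigl\|(z-T_\e)^{-1} - (z-T_0)^{-1}\bigr\|_{L^2\to L^2}
\le C\norm{T_\e-T_0}_{L^2\to L^2}\le C\e,
\end{equation*}
with $C$ independent of $z\in\Gamma$ and of small $\e$. Integrating over the fixed contour $\Gamma$ (of fixed finite length) gives
\begin{equation*}
\norm{S_\e - S_0}_{L^2\to L^2}\le \frac{1}{2\pi}\,|\Gamma|\cdot C\e \le C\e,
\end{equation*}
proving the lemma.

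I do not anticipate any real obstacle here: the only slightly delicate point is the uniform-in-$\e$ lower bound on $\mathrm{dist}(\Gamma,\sigma(T_\e))$, but this is precisely the standard consequence of the zero-order spectral convergence \eqref{cdn_lambda} (equivalently $\mu_{\e,L+j}\to\mu_0$) together with the fact that, by collective compactness of $\{T_\e\}$, no other eigenvalues of $T_\e$ can accumulate inside $\Gamma$ for small $\e$. The remaining steps are routine resolvent estimates.
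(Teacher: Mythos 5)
Your proof is correct and follows essentially the same route as the paper: write $S_\e - S_0$ as a contour integral of the resolvent difference, apply the second resolvent identity, and combine the uniform resolvent bounds on $\Gamma$ with the estimate $\norm{T_\e - T_0}_{L^2\to L^2}\le C\e$. The only difference is that you spell out the uniform-in-$\e$ resolvent bound via self-adjointness and spectral convergence, which the paper takes for granted from the setup of $\Gamma$.
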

\begin{proof}
	For any $g\in L^2(\Omega;\R^m)$,
	\begin{align}\label{eq_Eeg}
		\begin{aligned}
			S_\e g - S_0 g & = \frac{1}{2\pi i} \int_{\Gamma} ((z-T_\e)^{-1} - (z-T_0)^{-1}) g dz \\
			& = \frac{1}{2\pi i} \int_{\Gamma} (z-T_\e)^{-1}(T_\e - T_0) (z-T_0)^{-1} g dz,
		\end{aligned}
	\end{align}
	where we have used the second resolvent identity
	\begin{equation}\label{eq_ID}
	(z-T_\e)^{-1} - (z-T_0)^{-1} = (z-T_\e)^{-1}(T_\e - T_0) (z-T_0)^{-1}.
	\end{equation}
	Recall that $\norm{T_\e - T_0}_{L^2(\Omega)\to L^2(\Omega)} \le C\e$, and $ (z-T_0)^{-1}$ and $(z-T_\e)^{-1}$ are uniformly bounded in $L^2(\Omega;\R^m)$, provided $\e$ is sufficiently small. The lemma follows from (\ref{eq_Eeg}) easily.
\end{proof}

Since the identity (\ref{eq_Eeg}) is not sufficient to study the first-order expansion, we apply the identity (\ref{eq_ID}) in (\ref{eq_Eeg}) again and obtain
\begin{align}\label{eq_Eephi_g}
	\begin{aligned}
		S_\e g - S_0 g & = \frac{1}{2\pi i} \int_{\Gamma} (z-T_0)^{-1}(T_\e - T_0) (z-T_0)^{-1} g dz \\
		& \quad + \frac{1}{2\pi i} \int_{\Gamma} (z-T_\e)^{-1}(T_\e - T_0) (z-T_0)^{-1} (T_\e - T_0) (z-T_0)^{-1} g dz
\end{aligned}\end{align}
For the same reason as in the proof of Lemma \ref{lem_EeEL2}, the second term of (\ref{eq_Eephi_g}) is bounded by $C\e^2 \norm{g}_{L^2(\Omega)}$, which is a higher-order error. Therefore, it suffices to consider the first-order expansion of
\begin{equation*}
\frac{1}{2\pi i} \int_{\Gamma} (z-T_0)^{-1}(T_\e - T_0) (z-T_0)^{-1} g dz.
\end{equation*}

To this end, the lemmas below are crucial for us.
\begin{lemma}\label{lem_zT_f}
	For any $f\in L^2(\Omega;\R^m)$, we have
	\begin{equation}\label{eq_zt-1}
	(z-T_0)^{-1} f = z^{-1} f + z^{-1}(z-T_0)^{-1} T_0 f.
	\end{equation}
	In particular, if $g \in \mathcal{R}(S_0)$, then
	\begin{equation}\label{eq.zT0}
	(z-T_0)^{-1} g = \frac{g}{z-\mu_0}.
	\end{equation} 
\end{lemma}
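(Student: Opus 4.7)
The plan is to verify both identities by direct algebraic manipulation of the resolvent, with no spectral machinery beyond what has already been introduced in the paragraph setting up the contour $\Gamma$.

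For (\ref{eq_zt-1}), I would begin from the trivial operator splitting $z I = (z - T_0) + T_0$ and apply $(z-T_0)^{-1}$ on the left to obtain
\[
z(z-T_0)^{-1} = I + (z-T_0)^{-1} T_0.
\]
Since $\Gamma$ is a small circle centered at the nonzero eigenvalue $\mu_0 = \lambda_0^{-1}$, we may arrange so that $0 \notin \Gamma$. Dividing the above identity by $z$ and evaluating at $f \in L^2(\Omega;\R^m)$ gives precisely (\ref{eq_zt-1}).

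For (\ref{eq.zT0}), I would use that $\mathcal{R}(S_0)$ is by construction the eigenspace of $T_0$ at eigenvalue $\mu_0$, spanned by $\{\phi_{0,L+j}\}_{j=0}^{M-1}$, so that any $g \in \mathcal{R}(S_0)$ satisfies $T_0 g = \mu_0 g$. Consequently $(z-T_0)g = (z-\mu_0)g$, and since $\Gamma$ was chosen to enclose $\mu_0$ without passing through it, the scalar $z-\mu_0$ is nonzero for every $z \in \Gamma$. Applying $(z-T_0)^{-1}$ to both sides yields $(z-T_0)^{-1} g = g/(z-\mu_0)$ at once.

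There is essentially no obstacle here: both statements are formal consequences of the definitions of the resolvent and of the spectral projection $S_0$. The only prerequisites are $z \ne 0$ and $z \ne \mu_0$ for $z \in \Gamma$, both of which follow from the choice of contour made at the beginning of the section. In subsequent uses of the lemma one can simply iterate (\ref{eq_zt-1}) to extract powers of $z^{-1}$, which is presumably the reason for stating the identity in this particular form.
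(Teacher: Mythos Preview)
Your proposal is correct and essentially matches the paper's approach: the paper verifies (\ref{eq_zt-1}) by applying $z-T_0$ to both sides (equivalent to your forward derivation from $zI = (z-T_0)+T_0$), and obtains (\ref{eq.zT0}) from (\ref{eq_zt-1}) together with $T_0 g = \mu_0 g$, whereas you derive (\ref{eq.zT0}) directly from $(z-T_0)g = (z-\mu_0)g$. Both routes are immediate algebraic verifications with no substantive difference.
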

\begin{proof}
	It is easy to see (\ref{eq_zt-1}) by acting $z-T_0$ on both sides. And (\ref{eq.zT0}) follows from (\ref{eq_zt-1}) and the fact $T_0 g = \mu_0 g$.
\end{proof}

\begin{lemma}\label{lem.T0feg}
	Let $f\in C^\sigma(\T^d;\R^m)$ for some $\sigma\in (0,1)$,  $\int_{\T^d} f = 0$ and $g \in H^1(\Omega;\R^m)$. Then,
	\begin{equation*}
	\norm{T_0(f^\e g)}_{L^2(\Omega)} \le C \e \norm{f}_{C^\sigma(\T^d)} \norm{g}_{H^1(\Omega)},
	\end{equation*}
	where $f^\e(x) = f(x/\e)$.
\end{lemma}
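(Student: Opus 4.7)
The plan is to combine an antiderivative trick for the mean-zero periodic function $f$ with an $L^2$ duality argument against $T_0$, exploiting the fact that $\cL_0$ is self-adjoint (because $\widehat{A}$ inherits the symmetry of $A$).

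First, since $f$ is $1$-periodic, $C^\sigma$, and has vanishing mean, Schauder theory on $\T^d$ produces $B \in C^{2,\sigma}(\T^d;\R^m)$ solving $\Delta B = f$ with $\norm{B}_{C^{2,\sigma}} \le C \norm{f}_{C^\sigma}$. Writing $F := \nabla B$ and $F^\e(x) := F(x/\e)$, a direct chain rule computation gives the identity
\begin{equation*}
f^\e(x) = \e \, \txt{div}\bigl(F^\e(x)\bigr), \qquad \norm{F}_{L^\infty(\T^d)} \le C \norm{f}_{C^\sigma(\T^d)}.
\end{equation*}
This converts the mean-zero property of $f$ into an explicit prefactor of $\e$ in front of a divergence.

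Next, set $u_0 := T_0(f^\e g) \in H_0^1(\Omega;\R^m)$ and use duality: for any test function $\phi \in L^2(\Omega;\R^m)$ with $\norm{\phi}_{L^2} \le 1$, let $v := T_0 \phi \in H^2(\Omega;\R^m) \cap H_0^1(\Omega;\R^m)$. By standard elliptic regularity for the constant-coefficient operator $\cL_0$ on the smooth domain $\Omega$, $\norm{v}_{H^1(\Omega)} \le C \norm{\phi}_{L^2(\Omega)}$. Since $\widehat{A}^* = \widehat{A}$ gives $\cL_0 = \cL_0^*$, and both $u_0$ and $v$ vanish on $\partial \Omega$,
\begin{equation*}
\Ag{u_0,\phi} = \Ag{u_0, \cL_0 v} = \Ag{\cL_0 u_0, v} = \int_\Omega (f^\e g) \cdot v \, dx = - \e \int_\Omega F^\e \cdot \nabla(g v) \, dx,
\end{equation*}
where the last equality is integration by parts, with the boundary term vanishing because $v|_{\partial \Omega} = 0$. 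Applying the Leibniz rule and Cauchy--Schwarz to $\nabla(gv) = (\nabla g) v + g (\nabla v)$ bounds the right-hand side by $C \e \norm{f}_{C^\sigma} \norm{g}_{H^1(\Omega)} \norm{v}_{H^1(\Omega)} \le C \e \norm{f}_{C^\sigma} \norm{g}_{H^1(\Omega)} \norm{\phi}_{L^2(\Omega)}$. Taking the supremum over such $\phi$ yields $\norm{u_0}_{L^2(\Omega)} \le C\e \norm{f}_{C^\sigma} \norm{g}_{H^1(\Omega)}$, as claimed.

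The only delicate point is the first step: the Hölder exponent $\sigma > 0$ is what allows us to promote $B \in W^{2,p}$ to $B \in C^{2,\sigma}$, guaranteeing $\nabla B \in L^\infty$ rather than merely in finite $L^p$. If $f$ were only in $L^\infty$, we would need the $L^p$ version and extra Sobolev embeddings to absorb the resulting $L^p$ norm of $F$; the $C^\sigma$ hypothesis is precisely what lets us use the cleaner bound on $\norm{F}_{L^\infty}$ and obtain the result in the straightforward form stated. Everything else is a routine combination of duality, periodic oscillation integration by parts, and elliptic regularity for $\cL_0$.
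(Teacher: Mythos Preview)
Your proof is correct, but it takes a different route than the paper's. Both arguments use the same antiderivative trick (solving $-\Delta F = f$ on $\T^d$ to write $f^\e$ as $\e$ times a divergence), but where you pair $u_0 = T_0(f^\e g)$ against an arbitrary $\phi$ via duality and invoke the self-adjointness of $\cL_0$, the paper simply tests the equation $\cL_0 u_0 = f^\e g$ against $u_0$ itself. That direct energy argument yields
\[
\Lambda^{-1}\norm{\nabla u_0}_{L^2(\Omega)}^2 \le \e \int_\Omega \nabla F(x/\e)\cdot \nabla\bigl(g(x)u_0(x)\bigr)\,dx \le C\e \norm{f}_{C^\sigma}\norm{g}_{H^1}\norm{\nabla u_0}_{L^2},
\]
from which the $L^2$ bound follows by Poincar\'e. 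The paper's approach is slightly more economical in two ways: it does not use the symmetry $\widehat{A}^*=\widehat{A}$ (your step $\Ag{u_0,\cL_0 v}=\Ag{\cL_0 u_0,v}$ does), and it actually proves the stronger $H^1$ bound $\norm{\nabla T_0(f^\e g)}_{L^2}\le C\e\norm{f}_{C^\sigma}\norm{g}_{H^1}$. That stronger version (and its analogue for $T_\e$) is what is really invoked later in Section~5 to control $\norm{\nabla T_\e(\chi^\e\nabla\phi_0)}_{L^2}$, so if you were writing the paper you would eventually want the energy-method upgrade anyway.
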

\begin{proof}
	Let $v_\e = T_0(f^\e g)$, i.e., $v_\e$ satisfies $\cL_0 v_\e = f^\e g$ in $\Omega$ and $v_\e = 0$ on $\partial\Omega$. Then integrating the equation against $v_\e$ and using integration by parts, one has
	\begin{equation*}
	\int_{\Omega} \widehat{A}\nabla v_\e \cdot \nabla v_\e = \int_{\Omega} f(x/\e) g(x) v_\e(x)dx.
	\end{equation*}
	Since the mean value of $f$ is zero, there is a unique periodic function $F$ with mean value zero such that $-\Delta F = f$ and $\norm{\nabla F}_{L^\infty(\T^d)} \le C \norm{f}_{C^\sigma(\T^d)}$. Then by the ellipticity condition and the Poincar\'{e} inequality,
	\begin{align*}
		\Lambda^{-1} \norm{\nabla v_\e}_{L^2(\Omega)}^2 &\le 
		\int_{\Omega} \widehat{A} \nabla v_\e \cdot \nabla v_\e \\
		& = \e \int_{\Omega} \nabla F(x/\e) \cdot \nabla( g(x) v_\e(x)) dx \\
		&\le C \e \norm{f}_{C^\sigma(\T^d)} \norm{\nabla v_\e}_{L^2(\Omega)} \norm{g}_{H^1(\Omega)}.
	\end{align*}
	This implies the desired estimate.
\end{proof}

\begin{proof}[Proof of Theorem \ref{thm_Ee}, part I]
	Fix some $g \in \mathcal{R}(S_0)$ with $\norm{g}_{L^2(\Omega)} =1$. As mentioned before, (\ref{eq_Eephi_g}) and (\ref{eq.zT0}) imply
	\begin{equation*}
	S_\e g - S_0 g = \frac{1}{2\pi i} \int_{\Gamma} (z-\mu_0)^{-1} (z-T_0)^{-1}(T_\e - T_0) g dz + O(\e^2),
	\end{equation*}
	in the sense of $L^2(\Omega;\R^m)$. By Lemma \ref{lem_TeT} and $T_0g = \mu_0 g$, we have
	\begin{align}\label{est.Se-S0}
		\begin{aligned}	
			S_\e g - S_0 g = & \frac{1}{2\pi i} \int_{\Gamma}  \mu_0(z-\mu_0)^{-1} (z-T_0)^{-1} (\e\chi^\e_j \frac{\partial}{\partial x_j} g )dz \\
			& \qquad + \frac{1}{2\pi i} \int_{\Gamma} \mu_0(z-\mu_0)^{-1}(z-T_0)^{-1} \e K^{bl} g dz + O(\e^{\frac{3}{2}-}).
		\end{aligned}
	\end{align}
	We first deal with the first term on the right-hand side of (\ref{est.Se-S0}). Observe that Lemma \ref{lem_zT_f} implies
	\begin{equation}\label{eq.z-T0}
	(z-T_0)^{-1} (\e\chi^\e_j \frac{\partial}{\partial x_j} g) = z^{-1} \e\chi^\e_j \frac{\partial}{\partial x_j} g + z^{-1}(z-T_0)^{-1}T_0(\e\chi^\e_j \frac{\partial}{\partial x_j} g).
	\end{equation}
	Note that Lemma \ref{lem.T0feg} implies $T_0(\e\chi^\e \nabla g) \le C\e^2$. Combining this with (\ref{est.Se-S0}) and (\ref{eq.z-T0}), we have
	\begin{align}
		\begin{aligned}	\label{est.Seg-S0g}
			S_\e g - S_0 g & = \frac{1}{2\pi i} \int_{\Gamma} \mu_0(z-\mu_0)^{-1} z^{-1}dz \cdot \e\chi^\e_j \frac{\partial}{\partial x_j} g + \e \Psi^{bl}g + O(\e^{\frac{3}{2}-}) \\
			& = \e\chi^\e_j \frac{\partial}{\partial x_j} g + \e \Psi^{bl}g + O(\e^{\frac{3}{2}-}),
		\end{aligned}
	\end{align}
	where $\Psi^{bl} g$ is defined by
	\begin{equation}\label{eq_defpsi}
	\Psi^{bl}g = \frac{1}{2\pi i} \int_{\Gamma} \mu_0(z-\mu_0)^{-1}(z-T_0)^{-1} K^{bl} g dz,
	\end{equation}
	and we have used the fact
	\begin{equation*}
	\frac{1}{2\pi i} \int_{\Gamma} \mu_0(z-\mu_0)^{-1} z^{-1}dz = 1.
	\end{equation*}
	Note that $S_0 g = g$ for $g\in \mathcal{R}(S_0)$. Thus, (\ref{est.Seg-S0g}) implies the desired estimate (\ref{est.Se-RS0}).
	
	Finally, we need to show that the operator $\Psi^{bl}$ above satisfies (\ref{eq_psi1bl}). A computation shows that
	\begin{align*}
		\Psi^{bl}g  &= \frac{1}{2\pi i} \int_{\Gamma} \frac{\mu_0}{z-\mu_0}(z-T_0)^{-1} K^{bl} g dz \\
		& = - \frac{1}{2\pi i}\int_{\Gamma} (z-T_0)^{-1} K^{bl} g dz + \frac{1}{2\pi i} \int_{\Gamma} \frac{z}{z-\mu_0}(z-T_0)^{-1} K^{bl} g dz \\
		& = - S_0 K^{bl}g + \frac{1}{2\pi i} \int_{\Gamma} \frac{z}{z-\mu_0}(z-T_0)^{-1} K^{bl} \phi dz.
	\end{align*}
	Using Lemma \ref{lem_zT_f}, we have
	\begin{align*}
		& \frac{1}{2\pi i} \int_{\Gamma} \frac{z}{z-\mu_0}(z-T_0)^{-1} K^{bl} g dz \\
		&\qquad = \frac{1}{2\pi i} \int_{\Gamma} \frac{z}{z-\mu_0} z^{-1} K^{bl} g dz + \frac{1}{2\pi i} \int_{\Gamma}  \frac{z}{z-\mu_0}z^{-1}(z-T_0)^{-1} T_0 K^{bl} g dz \\
		& \qquad = K^{bl} g + \frac{1}{2\pi i} \int_{\Gamma} \frac{1}{z-\mu_0}(z-T_0)^{-1} T_0 K^{bl} g dz.
	\end{align*}
	It follows that
	\begin{equation}\label{eq.Psig}
	\Psi^{bl}g = (I- S_0) K^{bl}g + \frac{1}{2\pi i} \int_{\Gamma} \frac{1}{z-\mu_0}(z-T_0)^{-1} T_0 K^{bl} g dz.
	\end{equation}
	
	To proceed, we now claim that $T_0$ commutes with $(z-T_0)^{-1}$. Actually, by replacing $f$ with $(z-T_0)^{-1} f$ in (\ref{eq_zt-1}) and then acting $z-T_0$ on both sides, we obtain
	\begin{equation}
	(z-T_0)^{-1} f = z^{-1} f + z^{-1}T_0 (z-T_0)^{-1} f.
	\end{equation}
	This implies the claim in view of (\ref{eq_zt-1}). Hence, the last term on the right-hand side of (\ref{eq.Psig}) is equal to $\mu_0^{-1} T_0(\Psi^{bl}g)$. As a result,
	\begin{equation}\label{eq_psiKK}
	\Psi^{bl}g = (I- S_0) K^{bl}g + \mu_0^{-1} T_0(\Psi^{bl}g),
	\end{equation}
	or equivalently,
	\begin{equation*}
		(\mu_0-T_0)(\Psi^{bl}g) = \mu_0(I- S_0) K^{bl}g.
	\end{equation*}
	Observe that $(I- S_0) K^{bl}g$ is orthogonal to $\mathcal{R}(S_0)$, i.e., $(I- S_0) K^{bl}g \in \mathcal{R}(S_0)^{\perp}$. Because $T_0$ is compact on $L^2(\Omega;\R^m)$, the Fredholm theorem allows us to invert $\mu_0-T_0$ on $\mathcal{R}(S_0)^{\perp}$. Consequently, we obtain
	\begin{equation*}
		\Psi^{bl}g = \mu_0 (\mu_0-T_0)^{-1}(I- S_0) K^{bl}g.
	\end{equation*}
	Note that the operator $\Psi^{bl}: \mathcal{R}(S_0) \mapsto \mathcal{R}(S_0)^{\perp}$ is linear and bounded. This proves (\ref{eq_psi1bl}).
\end{proof}

\begin{remark}\label{rmk.psibl}
	The abstract formula (\ref{eq_psi1bl}) can be interpreted as a certain system. To see this, letting $\psi^{bl} = \Psi^{bl}g$ and applying $\cL_0$ to (\ref{eq_psiKK}), we obtain
	\begin{align*}
		\cL_0 \psi^{bl} = \bigg( -\bar{c}_{ijk} \frac{\partial^3}{\partial x_i \partial x_j \partial x_k} - \lambda_0 S_0 K^{bl} \bigg) g + \lambda_0 \psi^{bl},
	\end{align*}
	where we have used (\ref{eq.vblD}) and the fact $\cL_0 S_0 = \lambda_0 S_0$. To find out the boundary condition, note that $S_0 K^{bl} g$ and $T_0(\Psi^{bl} g)$ are both vanishing on $\partial\Omega$. Thus (\ref{eq_psiKK}) implies that $\psi^{bl}|_{\partial\Omega} = K^{bl}g|_{\partial\Omega}$.  Consequently, we obtain the equation for $\psi^{bl}$: 
	\begin{equation}
	\left\{
	\begin{aligned}
	\cL_0 \psi^{bl} &= \bigg( -\bar{c}_{ijk} \frac{\partial^3}{\partial x_i \partial x_j \partial x_k} - \lambda_0 S_0 K^{bl} \bigg) g + \lambda_0 \psi^{bl} &\quad & \txt{in } \Omega, \\
	\psi^{bl} &= K^{bl} g &\quad & \txt{on } \partial\Omega.
	\end{aligned}
	\right.
	\end{equation}
	Note that $\psi^{bl} + \mathcal{R}(S_0)$ forms the set of all solutions for the above system.
\end{remark}

\begin{corollary}\label{coro_SimEigen}
	Let $A$ and $\Omega$ satisfy the same assumptions as Theorem \ref{thm_lambda}. Let $\lambda_0 = \lambda_{0,L}$ be a simple Dirichlet eigenvalue of $\cL_0$ with eigenfunction $\phi_0 = \phi_{0,L}$ and $\lambda_\e = \lambda_{\e,L}$ be the Dirichlet eigenvalue of $\cL_\e$ with eigenfunction $\phi_\e = \phi_{\e,L}$. Then for $\e>0$ sufficiently small,
	\begin{equation}\label{est.phie.cor}
	\norm{\phi_\e - \phi_0 - \e \chi^\e \nabla \phi_0 - \e \Psi^{bl} \phi_0} _{L^2(\Omega)} \le C \e^{\frac{3}{2}-},
	\end{equation}
	where the operator $\Psi^{bl}$ is given by (\ref{eq_psi1bl})
\end{corollary}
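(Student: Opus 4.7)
The plan is to derive \eqref{est.phie.cor} by a single application of estimate \eqref{est.Se-RS0} of Theorem \ref{thm_Ee} at the input $g = \phi_0$. Because $\lambda_0$ is simple ($M=1$), the spectral projections are of rank one,
$$S_0 f = \Ag{f,\phi_0}\phi_0, \qquad S_\e f = \Ag{f,\phi_\e}\phi_\e,$$
so in particular $\phi_0 \in \mathcal{R}(S_0)$ with $S_0 \phi_0 = \phi_0$, while $S_\e \phi_0 = \Ag{\phi_0,\phi_\e}\phi_\e$. Feeding $\phi_0$ into \eqref{est.Se-RS0} immediately yields
$$\Norm{\Ag{\phi_0,\phi_\e}\phi_\e - \phi_0 - \e\chi^\e\nabla\phi_0 - \e\Psi^{bl}\phi_0}_{L^2(\Omega)} \le C\e^{\frac{3}{2}-}.$$

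It then remains to replace the scalar coefficient $\Ag{\phi_0,\phi_\e}$ by $1$ with an error that fits inside the $O(\e^{\frac{3}{2}-})$ bound. Since eigenfunctions are defined only up to sign, I fix the sign of $\phi_\e$ so that $\Ag{\phi_0,\phi_\e} \ge 0$; this is legitimate for $\e$ small by Lemma \ref{lem_EeEL2}, which forces $|\Ag{\phi_0,\phi_\e}|$ to be close to $1$. Applying Lemma \ref{lem_EeEL2} to $f = \phi_\e$ gives
$$\Norm{\Ag{\phi_0,\phi_\e}\phi_0 - \phi_\e}_{L^2(\Omega)} = \Norm{(S_0 - S_\e)\phi_\e}_{L^2(\Omega)} \le C\e,$$
and squaring together with $\norm{\phi_0}_{L^2(\Omega)} = \norm{\phi_\e}_{L^2(\Omega)} = 1$ produces $1 - \Ag{\phi_0,\phi_\e}^2 \le C\e^2$, whence $0 \le 1 - \Ag{\phi_0,\phi_\e} \le C\e^2$. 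Consequently $\norm{\Ag{\phi_0,\phi_\e}\phi_\e - \phi_\e}_{L^2(\Omega)} \le C\e^2$, and a triangle inequality then converts the bound above into \eqref{est.phie.cor}.

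There is no genuine obstacle here: the corollary is essentially a readout of the rank-one case of Theorem \ref{thm_Ee}, with the only minor subtlety being the sign convention on $\phi_\e$, which is made harmless by the zero-order convergence of $S_\e$ to $S_0$. All the necessary ingredients, namely the explicit rank-one form of $S_0$ and $S_\e$, the $O(\e)$ operator-norm bound of Lemma \ref{lem_EeEL2}, and Theorem \ref{thm_Ee} itself, have already been established, so no further tools are required.
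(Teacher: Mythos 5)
Your proposal is correct and follows essentially the same route as the paper: both invoke estimate \eqref{est.Se-RS0} at $g=\phi_0$ and then show $\norm{S_\e\phi_0 - \phi_\e}_{L^2(\Omega)} = |1 - \Ag{\phi_0,\phi_\e}| = O(\e^2)$ using the zero-order bound from Lemma \ref{lem_EeEL2} together with the unit normalization of the eigenfunctions (the paper packages this as $1 - \Ag{\phi_0,\phi_\e} = \tfrac{1}{2}\norm{\phi_\e-\phi_0}^2$ via a polarization identity, while you square $\norm{(S_0-S_\e)\phi_\e}$ directly, which is the same computation). Your explicit fixing of the sign of $\phi_\e$ so that $\Ag{\phi_0,\phi_\e}\ge 0$ is a sound and welcome clarification of a point the paper leaves implicit.
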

\begin{proof}
	Since $\lambda_0$ is simple, $\mathcal{R}(S_0) = \txt{Span}\{ \phi_0 \}$. In view of (\ref{cdn_lambda}), $\lambda_\e$ is also simple for sufficiently small $\e$ and $\mathcal{R}(S_\e) = \txt{Span}\{ \phi_\e \}$. Then, (\ref{est.Se-RS0}) implies
	\begin{equation}
	\norm{ S_\e \phi_0 - \phi_0 - \e \chi^\e \nabla \phi_0 - \e \Psi^{bl} \phi_0} _{L^2(\Omega)} \le C \e^{\frac{3}{2}-}.
	\end{equation}
	Then it is sufficient to show $\norm{S_\e \phi_0 - \phi_\e}_{L^2(\Omega)} \le C\e^2$. Actually, by $S_\e \phi_0 = \Ag{\phi_0, \phi_\e} \phi_\e$ and the fact $\Ag{\phi_\e - \phi_0, \phi_\e + \phi_0} = 0$, we have
	\begin{equation}
	S_\e \phi_0 - \phi_\e = \Ag{\phi_0, \phi_\e - \phi_0} \phi_\e = \frac{1}{2}\Ag{\phi_0 - \phi_\e, \phi_\e - \phi_0} \phi^\e.
	\end{equation}
	The desired estimate follows from $\norm{\phi_\e - \phi_0}_{L^2(\Omega)} \le C\e$, which is a corollary of Lemma \ref{lem_EeEL2}.
\end{proof}


Now we are in a position to investigate $S_\e$ as an operator defined on $\mathcal{R}(S_0)^{\perp}$.
\begin{lemma}\label{lem_Eeperp}
	For sufficiently small $\e > 0$, it holds
	\begin{equation}
	\norm{S_\e - \e S_0 (\chi^\e \nabla +  \Psi^{bl})^* }_{\mathcal{R}(S_0)^\perp \to L^2(\Omega)} \le C\e^{\frac{3}{2}-}.
	\end{equation}
\end{lemma}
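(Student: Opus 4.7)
The plan is to combine two ingredients: a duality argument that converts the already-established estimate (\ref{est.Se-RS0}) into a bound for $S_0 S_\e$ on $\mathcal{R}(S_0)^\perp$, together with a residue computation showing that $S_\e$ maps $\mathcal{R}(S_0)^\perp$ essentially into $\mathcal{R}(S_0)$ with an $O(\e^2)$ error.

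For the first ingredient, I would first reformulate (\ref{est.Se-RS0}) as the $L^2\to L^2$ operator bound
\begin{equation*}
\norm{(S_\e-S_0)S_0 - \e(\chi^\e\nabla+\Psi^{bl})S_0}_{L^2(\Omega)\to L^2(\Omega)} \le C\e^{\frac{3}{2}-},
\end{equation*}
which is equivalent to (\ref{est.Se-RS0}) since $S_0$ is the identity on $\mathcal{R}(S_0)$ and zero on $\mathcal{R}(S_0)^\perp$. Taking adjoints and using the self-adjointness of $S_\e$ and $S_0$ from Proposition~\ref{prop_E}(i) yields
\begin{equation*}
\norm{S_0(S_\e-S_0) - \e S_0(\chi^\e\nabla+\Psi^{bl})^*}_{L^2(\Omega)\to L^2(\Omega)} \le C\e^{\frac{3}{2}-}.
\end{equation*}
For $g\in\mathcal{R}(S_0)^\perp$, the identity $S_0 g=0$ reduces the left-hand side to $\norm{S_0 S_\e g - \e S_0(\chi^\e\nabla+\Psi^{bl})^* g}_{L^2}$, which controls the projection of $S_\e g$ onto $\mathcal{R}(S_0)$.

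For the second ingredient, I claim the sharper bound $\norm{(I-S_0)S_\e g}_{L^2}\le C\e^2\norm{g}_{L^2}$ whenever $g\in\mathcal{R}(S_0)^\perp$. Starting from the Riesz integral $S_\e g=\frac{1}{2\pi i}\int_\Gamma (z-T_\e)^{-1}g\,dz$ and applying the second resolvent identity (\ref{eq_ID}) twice in exactly the fashion of (\ref{eq_Eephi_g}) reduces the problem to
\begin{equation*}
S_\e g = \frac{1}{2\pi i}\int_{\Gamma}(z-T_0)^{-1}(T_\e-T_0)(z-T_0)^{-1}g\,dz + O(\e^2)\norm{g}_{L^2}.
\end{equation*}
The crucial observation is that $T_0$ is self-adjoint and so preserves $\mathcal{R}(S_0)^\perp$; thus for $g\in\mathcal{R}(S_0)^\perp$ the map $F(z):=(T_\e-T_0)(z-T_0)^{-1}g$ is holomorphic in $z$ inside $\Gamma$, and the only pole of the full integrand at $z=\mu_0$ comes from the outer factor $(z-T_0)^{-1}$, whose residue is $S_0$. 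Decomposing $(z-T_0)^{-1}=S_0(z-\mu_0)^{-1}+R(z)$ with $R(z)$ holomorphic inside $\Gamma$ and applying the residue theorem, the integral evaluates to $S_0 F(\mu_0)=S_0(T_\e-T_0)(\mu_0-T_0)^{-1}g$, where $(\mu_0-T_0)^{-1}$ is bounded on $\mathcal{R}(S_0)^\perp$ by Fredholm theory. Hence $S_\e g$ lies in $\mathcal{R}(S_0)$ modulo an $O(\e^2)$ remainder, so in particular $(I-S_0)S_\e g=O(\e^2)\norm{g}_{L^2}$.

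Writing $S_\e g = S_0 S_\e g + (I-S_0)S_\e g$ and combining the two bounds proves the lemma, since $O(\e^2)$ is absorbed into $O(\e^{\frac{3}{2}-})$. The main obstacle is the residue computation in the second step: its key point is that the holomorphy of $F(z)$ holds \emph{precisely because} $g\in\mathcal{R}(S_0)^\perp$, and this is what lets us avoid invoking Lemma~\ref{lem_TeT} (which is known only on $\mathcal{R}(S_0)$) for the argument $(z-T_0)^{-1}g$; instead it delivers the structural fact that $(I-S_0)S_\e$ is of order $\e^2$, which is better than the $\e^{\frac{3}{2}-}$ rate on the complementary piece.
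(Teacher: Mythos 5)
Your proof is correct, and the first half coincides with the paper's argument, while the second half takes a genuinely different route. Both you and the paper split the task into controlling $S_0 S_\e$ and $(I-S_0)S_\e$ on $\mathcal{R}(S_0)^\perp$. Your first ingredient (taking the adjoint of the operator reformulation of (\ref{est.Se-RS0}) and restricting to $\mathcal{R}(S_0)^\perp$) is exactly the paper's duality computation $\Ag{h_1,S_\e g}=\Ag{S_\e h_1,g}$ written in operator form; these are the same argument. The second ingredient is where you diverge: to show $(I-S_0)S_\e g = O(\e^2)$ for $g\in\mathcal{R}(S_0)^\perp$, the paper uses the one-line algebraic identity $\Ag{h_2,S_\e g}=\Ag{S_\e h_2,S_\e g}=\Ag{(S_\e-S_0)h_2,(S_\e-S_0)g}$, which is immediate from $S_\e^2=S_\e=S_\e^*$, $S_0 h_2 = S_0 g = 0$, and Lemma \ref{lem_EeEL2}. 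You instead go back to the Riesz integral, expand by the second resolvent identity, and do a residue computation exploiting the fact that $(z-T_0)^{-1}g$ is holomorphic inside $\Gamma$ when $g\in\mathcal{R}(S_0)^\perp$, so the only pole at $\mu_0$ comes from the outer resolvent and contributes $S_0 F(\mu_0)\in\mathcal{R}(S_0)$. Both arguments are sound; the paper's algebraic trick is shorter, whereas your residue argument is more explicit (it actually identifies the $O(\e)$ leading term of $S_\e g$ as $S_0(T_\e-T_0)(\mu_0-T_0)^{-1}g$, not just its orthogonality class) and makes the key structural role of $g\in\mathcal{R}(S_0)^\perp$ transparent. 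The small technical points you flagged — that $(\mu_0-T_0)^{-1}$ must be interpreted as the restricted inverse on $\mathcal{R}(S_0)^\perp$, and that the decomposition $(z-T_0)^{-1}=S_0(z-\mu_0)^{-1}+R(z)$ with $R$ holomorphic inside $\Gamma$ is valid — are correctly handled.
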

Note that in Lemma \ref{lem_Eeperp}, $\Psi^{bl*}$ is the adjoint operator of $\Psi^{bl}$, and by definition, $\Psi^{bl}$ is a bounded linear operator from $W^{3,\infty}(\Omega;\R^m)$ to $L^2(\Omega;\R^m)$. Thus, $\Psi^{bl*}$ is a bounded operator from $L^2(\Omega;\R^m)$ to $W^{3,\infty}(\Omega;\R^m)' \subset H^{-s}(\Omega;\R^m)$ for some $s>0$. Then, by Proposition \ref{prop_E} part (iii), $S_0  \Psi^{bl*}$ is a well-defined bounded operator on $L^2(\Omega;\R^m)$.

\begin{proof}[Proof of Lemma \ref{lem_Eeperp}]
	Let $g\in \mathcal{R}(S_0)^\perp$ and $\norm{g}_{L^2(\Omega)} = 1$ . For a given $h\in L^2(\Omega;\R^m)$ with $\norm{h}_{L^2(\Omega)} = 1$, let $h = h_1 + h_2$ with $h_1\in \mathcal{R}(S_0), h_2 \in \mathcal{R}(S_0)^\perp$. Consider
	\begin{equation*}
	\Ag{h,S_\e g} = \Ag{h_1,S_\e g} + \Ag{h_2,S_\e g}.
	\end{equation*}
	For the first term, using $\Ag{h_1,g} = 0, S_0 h = h_1$ and (\ref{est.Se-RS0}), we have
	\begin{align}
		\begin{aligned}
			\Ag{h_1,S_\e g} & = \Ag{S_\e h_1, g} \\
			& = \Ag{ h_1 + \e(\chi^\e \nabla + \Psi^{bl}) S_0 h_1, g} + O(\e^{\frac{3}{2}-}) \\
			& =  \Ag{  h, \e S_0 (\chi^\e \nabla + \Psi^{bl})^* g} + O(\e^{\frac{3}{2}-}).
		\end{aligned}
	\end{align}
	On the other hand, using $S_0 h_2 = 0, S_\e = S_\e^2$ and Lemma \ref{lem_EeEL2}, we obtain
	\begin{align}
		\Ag{h_2,S_\e g} = \Ag{(S_\e-S_0)^2 h_2, g} = O(\e^2).
	\end{align}
	Therefore, we have
	\begin{equation*}
	\Ag{h,S_\e g} = \Ag{  h, \e S_0 (\chi^\e \nabla + \Psi^{bl})^* g} + O(\e^{\frac{3}{2}-}).
	\end{equation*}
	This implies the desired estimate.
\end{proof}

\begin{lemma}\label{lem.Psi*}
	For sufficiently small $\e>0$, it holds
	\begin{equation}\label{est.Psi*}
	\norm{S_0 (\chi^\e \nabla +  \Psi^{bl})^* }_{\mathcal{R}(S_0) \to L^2(\Omega)} \le C\e.
	\end{equation}
\end{lemma}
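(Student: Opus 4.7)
The plan is to reduce the operator bound to a pairing estimate by duality and to exploit two structural features: the range of $\Psi^{bl}$ is orthogonal to $\mathcal{R}(S_0)$, and $\chi$ has zero mean.

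For $g\in\mathcal{R}(S_0)$ with $\|g\|_{L^2}=1$, I would write
\begin{equation*}
\norm{S_0(\chi^\e\nabla+\Psi^{bl})^* g}_{L^2(\Omega)}
= \sup_{\norm{h}_{L^2}=1} \Ag{(\chi^\e\nabla+\Psi^{bl})S_0 h,\, g},
\end{equation*}
using $S_0^*=S_0$. Then I would split the pairing into a $\Psi^{bl}$-piece and a $\chi^\e\nabla$-piece. The first piece vanishes outright: by definition $\Psi^{bl}:\mathcal{R}(S_0)\to\mathcal{R}(S_0)^\perp$, so $\Psi^{bl} S_0 h \in \mathcal{R}(S_0)^\perp$, while $g\in\mathcal{R}(S_0)$; hence $\Ag{\Psi^{bl}S_0 h, g}=0$.

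It remains to control $\Ag{\chi_j^\e \partial_j (S_0 h),\, g}$. Here both $\phi_0:=S_0 h$ and $g$ are finite linear combinations of the smooth Dirichlet eigenfunctions $\phi_{0,L+j}$, so in particular $\phi_0,g\in C^\infty(\overline\Omega;\R^m)$, they vanish on $\partial\Omega$, and
\begin{equation*}
\norm{\phi_0}_{H^2(\Omega)} \le C\norm{h}_{L^2(\Omega)}, \qquad \norm{g}_{H^1(\Omega)} \le C.
\end{equation*}
Since $\chi_j$ is smooth, $1$-periodic and of zero mean, choose $B_j$ smooth and $1$-periodic with $-\Delta B_j=\chi_j$ and $\norm{\nabla B_j}_{L^\infty(\T^d)}\le C$, so that $\chi_j^\e(x) = -\e^2 \Delta[B_j(x/\e)]$. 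Integrating by parts twice and using $g|_{\partial\Omega}=0$ to discard the boundary contribution,
\begin{equation*}
\Ag{\chi_j^\e \partial_j \phi_0,\, g}
= \e\int_\Omega \nabla B_j(x/\e)\cdot \nabla\bigl(\partial_j\phi_0(x)\, g(x)\bigr)\, dx,
\end{equation*}
which is bounded in absolute value by $C\e\,\norm{\phi_0}_{H^2}\norm{g}_{H^1}\le C\e\,\norm{h}_{L^2}$. Taking the supremum over $h$ yields the claimed $O(\e)$ bound.

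The argument is essentially calculational once the orthogonality $\mathcal{R}(\Psi^{bl})\subset\mathcal{R}(S_0)^\perp$ is invoked, and no genuine obstacle appears; the only point requiring a little care is the interchange of $(\chi^\e\nabla+\Psi^{bl})^*$ with the duality pairing, which is justified because $\chi^\e\nabla+\Psi^{bl}$ is bounded from $\mathcal{R}(S_0)\subset W^{3,\infty}(\Omega;\R^m)$ into $L^2(\Omega;\R^m)$ (as noted just before the lemma) and $g$ lies in the smooth subspace $\mathcal{R}(S_0)$ on which the adjoint acts classically.
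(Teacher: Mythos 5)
Your proof is correct and follows essentially the same route as the paper: dualize against $h\in L^2$, observe that $\Psi^{bl}S_0 h\perp \mathcal{R}(S_0)$ kills the $\Psi^{bl}$-piece (the paper phrases this as $S_0\Psi^{bl*}S_0=0$), and bound the $\chi^\e\nabla$-piece by integrating by parts against a periodic potential $B_j$ with $-\Delta B_j=\chi_j$. Your explicit remark that $g|_{\partial\Omega}=0$ is what kills the boundary term is a welcome bit of care the paper leaves implicit in its pointer to Lemma \ref{lem.T0feg}.
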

\begin{proof}
	First, we show 
	\begin{equation}\label{est.S0chi}
	\norm{S_0 (\chi^\e \nabla)^*}_{\mathcal{R}(S_0) \to L^2(\Omega)} \le C\e.
	\end{equation}
	Actually, let $g\in \mathcal{R}(S_0)$ and $h\in L^2(\Omega;\R^m)$. Then, by a similar argument as Lemma \ref{lem.T0feg}, we have
	\begin{align*}
		|\Ag{h,S_0 (\chi^\e \nabla)^* g}| & = \bigg| \int_{\Omega} \chi(x/\e) \nabla(S_0 h)(x) \cdot g(x) dx \bigg| \\
		& \le C\e \norm{S_0 h}_{H^2(\Omega)} \norm{g}_{H^1(\Omega)} \\
		& \le C \e \norm{h}_{L^2(\Omega)} \norm{g}_{L^2(\Omega)},
	\end{align*}
	where we also used Proposition \ref{prop_E} in the last inequality. This implies (\ref{est.S0chi}) as desired.
	
	Next, we show that $S_0   \Psi^{bl*}   S_0 = 0$ on $L^2(\Omega;\R^m)$. Actually, recalling that $\Psi^{bl}$ maps $\mathcal{R}(S_0)$ to $\mathcal{R}(S_0)^\perp$, we have $S_0   \Psi^{bl}   S_0 = 0$ on $L^2(\Omega;\R^m)$, which implies $S_0   \Psi^{bl*}   S_0 = 0$ on $L^2(\Omega;\R^m)$. This, together with (\ref{est.S0chi}), proves the lemma.
\end{proof}

\begin{proof}[Proof of Theorem \ref{thm_Ee}, part II]
	For any function $g \in L^2(\Omega;\R^m)$ write $g = g_1 + g_2$ such that $g_1 \in \mathcal{R}(S_0)$ and $g_2\in \mathcal{R}(S_0)^\perp$. Note that $S_0 g_2 = 0$. It follows that
	\begin{align}\label{est_EegEg}
		\begin{aligned}
			&\norm{S_\e g - S_0 g - \e (\chi^\e_j \partial_j + \Psi^{bl})   S_0 g- \e S_0   (\chi^\e_j \partial_j + \Psi^{bl})^* g}_{L^2(\Omega)} \\
			&\quad \le \norm{S_\e g_1 - S_0 g_1 - \e (\chi^\e_j \partial_j + \Psi^{bl})   S_0 g_1}_{L^2(\Omega)} \\
			& \qquad + \e \norm{ S_0   (\chi^\e_j \partial_j + \Psi^{bl})^* g_1}_{L^2(\Omega)} \\
			& \qquad+ \norm{S_\e g_2 - \e S_0   (\chi^\e_j \partial_j + \Psi^{bl})^* g_2}_{L^2(\Omega)}.
		\end{aligned}
	\end{align}
	All the three term are bounded by $C\e^{\frac{3}{2}-}\norm{g}_{L^2(\Omega)}$ thanks to (\ref{est.Se-RS0}), Lemma \ref{lem.Psi*} and Lemma \ref{lem_Eeperp}, respectively. The proof is complete.
\end{proof}


\section{Interior expansion of gradient}
This section is devoted to the proof of Theorem \ref{thm.dDphie}. Throughout, we assume $\lambda_0 = \lambda_{0,L}$ is a simple Dirichlet eigenvalue for some $L\ge 1$ and let $\phi_0 = \phi_{0,L}, \phi_\e = \phi_{\e,L}$ and $\lambda_\e = \lambda_{\e,L}$. By Theorem \ref{thm_lambda} and Corollary \ref{coro_SimEigen}, we have
\begin{equation*}
\begin{aligned}
\cL_\e \phi_\e & = \lambda_\e \phi_\e \\
&= \lambda_0 \phi_0 + \e(\theta \phi_0 + \lambda_0 \Psi^{bl}\phi_0 ) + \e \lambda_0 \chi^\e \nabla \phi_0 + R_\e,
\end{aligned}
\end{equation*}
where $\norm{R_\e}_{L^2(\Omega)} \le C\e^{\frac{3}{2}-}$. Thus, one can write
\begin{equation*}
\begin{aligned}
\phi_\e & = \lambda_0 T_\e \phi_0 + \e T_\e (\theta \phi_0+ \lambda_0 \Psi^{bl}\phi_0 )\\
& \qquad   + \e T_\e(\chi^\e \nabla \phi_0) + T_\e (R_\e).
\end{aligned}
\end{equation*}
By the energy estimate and Lemma \ref{lem.T0feg} (the lemma holds for $T_\e$ as well), we have
\begin{equation*}
\norm{\nabla T_\e (R_\e)}_{L^2(\Omega)} \le C\e^{\frac{3}{2}-},
\end{equation*}
and
\begin{equation*}
\norm{\nabla T_\e (\chi^\e \nabla \phi_0)}_{L^2(\Omega)} \le C\e.
\end{equation*}
Consequently,
\begin{equation}\label{eq.Dphie}
\begin{aligned}
\nabla \phi_\e & = \lambda_0 \nabla T_\e \phi_0 \\
& \qquad + \e \nabla T_\e (\theta \phi_0 + \lambda_0 \Psi^{bl}\phi_0 ) + O(\e^{\frac{3}{2}-}), \quad \txt{in } L^2(\Omega;\R^m).
\end{aligned}
\end{equation}
Hence, it is sufficient to study the asymptotic behavior for $\nabla T_\e f$ with some $f$ independent of $\e$. Note that $u_\e = T_\e f$ is the weak solution of (\ref{eq_Leue}). The following theorem should be well-known.
\begin{theorem}\label{thm.ue2nd}
	Let $u_\e = T_\e f$ and $u_0 = T_0 f$ for properly smooth $f$. Then
	\begin{equation}\label{est.ueH2}
	\norm{ u_\e - u_0 - \e \chi^\e \nabla u_0  - \e v_{1,\e}^{bl} - \e^2 \varUpsilon^\e \nabla^2 u_0 - \e^2 v_{2,\e}^{bl} }_{H^1(\Omega)} \le C\e^2 \norm{u_0}_{H^3(\Omega)}.
	\end{equation}
	where $v_{1,\e}^{bl}$ is given by (\ref{eq_bl}) and $v_{2,\e}^{bl}$ is given by
	\begin{equation}\label{eq_bl2}
	\left\{
	\begin{aligned}
	\cL_\e v_{2,\e}^{bl}(x) & = 0  &\quad & \txt{in } \Omega, \\
	v_{2,\e}^{bl}(x) &= -\varUpsilon^\e \nabla^2 u_0 &\quad & \txt{on } \partial\Omega.
	\end{aligned}
	\right.
	\end{equation}
\end{theorem}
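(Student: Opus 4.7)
The plan is to run the standard second-order two-scale expansion, with the first- and second-order boundary layers tailored to cancel both the interior source generated by $u_0$ and the boundary mismatch left by the interior correctors. Set
\[
W_\e := u_0 + \e \chi^\e \nabla u_0 + \e v_{1,\e}^{bl} + \e^2 \varUpsilon^\e \nabla^2 u_0 + \e^2 v_{2,\e}^{bl},
\]
and $w_\e := u_\e - W_\e$. The first step is to check that $w_\e\in H^1_0(\Omega;\R^m)$. On $\partial\Omega$ we have $u_\e = u_0 = 0$; the boundary condition in (\ref{eq_bl}) makes $\e v_{1,\e}^{bl}$ cancel $\e \chi^\e \nabla u_0$, and (\ref{eq_bl2}) makes $\e^2 v_{2,\e}^{bl}$ cancel $\e^2 \varUpsilon^\e \nabla^2 u_0$, so indeed $w_\e = 0$ on $\partial\Omega$.

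The core of the proof is the interior computation, showing that $\cL_\e w_\e = \partial_i g_i^\e$ with $\norm{g^\e}_{L^2(\Omega)} \le C\e^2 \norm{u_0}_{H^3(\Omega)}$. Applying $\cL_\e$ to $u_0 + \e \chi^\e \nabla u_0$ and using the first-order flux corrector $b_{ijk}$ of (\ref{eq.Bijk}) to rewrite $a_{ij}^\e (\delta_{jk}+(\partial_j\chi_k)^\e) - \hat a_{ik}$ in divergence form, one obtains
\[
\cL_\e(u_0 + \e \chi^\e \nabla u_0) \;=\; \cL_0 u_0 \;-\; \e \partial_i\bigl(a_{ij}^\e \chi_k^\e \partial_j \partial_k u_0\bigr) \;+\; \e\partial_i\bigl(b_{ijk}^\e \partial_j \partial_k u_0\bigr).
\]
The second-order corrector $\varUpsilon$ is designed precisely so that, modulo a divergence-form $O(\e^2)$ remainder, $\cL_\e(\e^2 \varUpsilon^\e \nabla^2 u_0)$ cancels all of the non-divergence part of the $O(\e)$ remainder except the constant-coefficient third-derivative term $\e \bar c_{ijk}\partial_i\partial_j\partial_k u_0$. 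By (\ref{eq_bl}) this last piece is removed by $\e\cL_\e v_{1,\e}^{bl}$, while $\e^2 \cL_\e v_{2,\e}^{bl} = 0$ by (\ref{eq_bl2}). Combining these identities yields $\cL_\e W_\e = f + \partial_i g_i^\e$ with the desired bound on $g^\e$, and hence $\cL_\e w_\e = -\partial_i g_i^\e$.

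The last step is the standard Dirichlet energy estimate: testing $\cL_\e w_\e = -\partial_i g_i^\e$ against $w_\e \in H^1_0(\Omega;\R^m)$ and using ellipticity yields $\norm{\nabla w_\e}_{L^2(\Omega)} \le C\e^2 \norm{u_0}_{H^3(\Omega)}$, and Poincar\'e's inequality upgrades this to the full $H^1$ bound. The main obstacle is the algebraic bookkeeping in the second step: one must carefully track every term produced by the Leibniz rule when $\cL_\e$ hits the products $\chi^\e \nabla u_0$ and $\varUpsilon^\e \nabla^2 u_0$, and verify via the cell equations for $\chi$ and $\varUpsilon$ together with the flux corrector that every non-divergence term either cancels or reduces to the $\bar c_{ijk}$ piece handled by $v_{1,\e}^{bl}$. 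Once this is done, the rest is routine.
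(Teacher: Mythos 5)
Your proposal is correct and follows the same route as the paper: decompose $w_\e = u_\e - W_\e$, check $w_\e \in H^1_0(\Omega;\R^m)$ using the boundary conditions of (\ref{eq_bl}) and (\ref{eq_bl2}), show via the cell equations for $\chi$, $\varUpsilon$ and the flux correctors that $\cL_\e w_\e$ is a divergence of an $O(\e^2)$ field controlled by $\norm{u_0}_{H^3(\Omega)}$, and close with the energy estimate. The paper simply cites \cite[Theorem~9.1]{ShenZhuge16} for the identity $\cL_\e w_\e = \e^2\partial_i(f^\e_{ijk\ell}\partial_j\partial_k\partial_\ell u_0)$, whereas you sketch the intermediate steps; one small point worth making explicit when you ``carefully track every term'' is that after the $\varUpsilon$-cancellation the remaining $O(\e)$ non-divergence term has a \emph{periodic}, not constant, coefficient $c_{ijk}^\e$, and reducing it to the constant $\bar c_{ijk}$ piece (modulo a divergence that depends only on $\nabla^3 u_0$) uses that the symmetrized tensor $c_{ijk}+b_{ijk}$ is divergence-free in its first index, so a second antisymmetric flux corrector can be introduced without ever touching $\nabla^4 u_0$.
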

\begin{proof}[Sketch of the proof]
	Define
	\begin{equation*}
	w_\e = u_\e - u_0 - \e \chi^\e \nabla u_0  - \e v_{1,\e}^{bl} - \e^2 \varUpsilon^\e \nabla^2 u_0 - \e^2 v_{2,\e}^{bl}.
	\end{equation*}
	A direct calculation as \cite[Theorem 9.1]{ShenZhuge16} shows that
	\begin{equation}
	\left\{
	\begin{aligned}
	\cL_\e w_\e &= \e^2 \frac{\partial}{\partial x_i} \bigg( f^\e_{ijk\ell} \frac{\partial^3 u_0}{\partial x_j \partial x_k \partial x_\ell}  \bigg)  &\quad & \txt{in } \Omega, \\
	w_\e &= 0 &\quad & \txt{on } \partial\Omega,
	\end{aligned}
	\right.
	\end{equation}
	where $f^\e_{ijk\ell}$ are some bounded periodic functions constructed in terms of $A, \chi$ and $\varUpsilon$. Therefore, we obtain (\ref{est.ueH2}) by the energy estimate.
\end{proof}

We point out that the key in (\ref{est.ueH2}) is to understand the asymptotic behavior of $\nabla v_{1,\e}^{bl}$ as $\e \to 0$. Since the Dirichlet boundary data of $v_{1,\e}^{bl}$ is rapidly oscillating on $\partial \Omega$, $\nabla v_{1,\e}^{bl}$ tends to blow up near the boundary as $\e$ approaching zero. Even the homogenized solution $v^{bl}_{1,0}$ lacks some sort of regularity (say, $H^2(\Omega;R^m)$), because as far as we know, the boundary data $v^{bl}_{1,0}|_{\partial\Omega} = f^*$ is only in $W^{1,p}(\partial\Omega;\R^m)$ for $p<\infty$. Nevertheless, as it has been proved in Theorem \ref{thm_blL2} that $v_{1,\e}^{bl}$ converges to $v_{1,0}^{bl}$ in $L^2(\Omega;\R^m)$ with a rate of $O(\e^{\frac{1}{2}-})$, in the following two lemmas, we are able to prove a general result that may handle this situation.
\begin{lemma}\label{lem.dD2.v0}
	Assume $v_0 \in H^1(\Omega;\R^m)$ and $\cL_0 v_0 \in L^2_{\txt{loc}}(\Omega;\R^m)$. Then
	\begin{equation*}
	\norm{\delta \nabla^2 v_0 }_{L^2(\Omega)} \le C\norm{\nabla v_0}_{L^2(\Omega)} + C\norm{\delta \cL_0 v_0 }_{L^2(\Omega)},
	\end{equation*}
	where $\delta(x) = \txt{dist}(x,\partial\Omega)$.
\end{lemma}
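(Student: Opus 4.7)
The plan is to localize the standard interior $H^2$ estimate for the constant-coefficient operator $\cL_0$ on balls whose radius is proportional to the distance to $\partial\Omega$, and then glue these localized estimates together via a Whitney-type covering. The weight $\delta(x)^2$ is what makes this work: on a ball $B_{r(x)}(x)$ with $r(x)\sim\delta(x)$, the weight is comparable to $r(x)^2$, which is exactly the scaling factor that appears when one tries to recover the $L^2$ norm of $\nabla^2 v_0$ from $\nabla v_0$ via Caccioppoli.

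First I would fix a small dimensional constant $c_0$ and set $r(x)=c_0\delta(x)$ for each $x\in\Omega$, chosen so that $B_{2r(x)}(x)\subset\Omega$ and so that $\delta(y)$ and $r(x)$ are comparable, uniformly in $x$, for all $y\in B_{2r(x)}(x)$. Since $\widehat{A}$ is constant (the homogenized matrix) and elliptic, the classical interior estimate for $\cL_0$ gives
\begin{equation*}
\int_{B_{r(x)}(x)} |\nabla^2 v_0|^2 \le \frac{C}{r(x)^2} \int_{B_{2r(x)}(x)} |\nabla v_0|^2 + C\int_{B_{2r(x)}(x)} |\cL_0 v_0|^2,
\end{equation*}
where $C$ depends only on the ellipticity constant $\Lambda$ and the dimension. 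Multiplying by $r(x)^2$ and using $\delta(y)\sim r(x)$ on $B_{2r(x)}(x)$ yields
\begin{equation*}
\int_{B_{r(x)}(x)} \delta^2 |\nabla^2 v_0|^2 \le C\int_{B_{2r(x)}(x)} |\nabla v_0|^2 + C\int_{B_{2r(x)}(x)} \delta^2 |\cL_0 v_0|^2.
\end{equation*}

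Next I would invoke a standard Whitney-type covering: there exists a countable family $\{B_{r(x_i)}(x_i)\}_{i\ge 1}$ covering $\Omega$ such that the enlarged balls $\{B_{2r(x_i)}(x_i)\}$ have bounded overlap (depending only on the dimension). Summing the previous inequality over $i$ gives
\begin{equation*}
\int_\Omega \delta^2 |\nabla^2 v_0|^2 \le C\int_\Omega |\nabla v_0|^2 + C\int_\Omega \delta^2 |\cL_0 v_0|^2,
\end{equation*}
which is precisely the desired bound.

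The main technical point, and the only place one has to be careful, is the combination of the Whitney covering with the hypothesis that $\cL_0 v_0$ is merely in $L^2_{\txt{loc}}(\Omega;\R^m)$: the right-hand side of the summed estimate is finite whenever $\nabla v_0\in L^2(\Omega)$ and $\delta\cL_0 v_0\in L^2(\Omega)$, and the localized interior estimate is valid on each Whitney ball since it is compactly contained in $\Omega$. An approximation of $v_0$ by smooth truncations (or equivalently, truncation of the sum to balls in $\{x:\delta(x)>\eta\}$ and letting $\eta\to 0^+$ by monotone convergence) handles any regularity concerns regarding the a priori interpretation of $\nabla^2 v_0$.
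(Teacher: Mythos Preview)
Your proposal is correct and follows essentially the same approach as the paper: a localized interior $H^2$ estimate on balls of radius comparable to $\delta(x)$, weighted by $\delta^2\sim r^2$, and then summed over a Whitney-type covering with bounded overlap. The only cosmetic difference is that the paper obtains the local estimate by differentiating the equation and applying Caccioppoli to $\nabla v_0$, whereas you invoke the equivalent interior $H^2$ estimate for the constant-coefficient operator $\cL_0$ directly.
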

\begin{proof}
	This follows from the Caccioppoli's inequality. Actually, by setting $f=\cL_0 v_0$ and applying $\nabla$ to both sides, we have
	\begin{equation*}
	\cL_0 (\nabla v_0) = \nabla f.
	\end{equation*}
	For any $x\in \Omega$, the interior Caccioppoli's inequality implies that
	\begin{equation*}
	\int_{B(x,\delta(x)/4)} |\nabla^2 v_0|^2 \le \frac{C}{\delta(x)^2} \int_{B(x,\delta(x)/2)} |\nabla v_0|^2 + C\int_{B(x,\delta(x)/2)}|f|^2.
	\end{equation*}
	Observe that $\delta(y) \approx \delta(x)$ for any $y\in B(x,\delta(x)/2)$. The above inequality implies
	\begin{equation}\label{est.weightBall}
	\int_{B(x,\delta(x)/4)} \delta^2 |\nabla^2 v_0|^2 \le C \int_{B(x,\delta(x)/2)} |\nabla v_0|^2 + C\int_{B(x,\delta(x)/2)}\delta^2 |f|^2.
	\end{equation}
	Finally, we can cover $\Omega$ by a sequence of balls with finite overlaps, such that the sizes of these balls are comparable to the distance from boundary. The lemma follows then from (\ref{est.weightBall}).
\end{proof}
\begin{lemma}\label{lem.delta.H1}
	For any $v_\e, v_0 \in H^1(\Omega;\R^m)$ satisfying $\cL_\e v_\e = \cL_0 v_0 \in L^2(\Omega;\R^m)$, we have
	\begin{align}
		\begin{aligned}\label{est.delta.H1}
			& \norm{\delta( \nabla v_\e - \nabla v_0 - \nabla \chi^\e \nabla v_0)}_{L^2(\Omega)} \\
			& \qquad \le C\e \big(\norm{\nabla v_0}_{L^2(\Omega)} + \norm{\delta \cL_0 v_0 }_{L^2(\Omega)}\big) + C\norm{v_\e - v_0}_{L^2(\Omega)}.
		\end{aligned}
	\end{align}
\end{lemma}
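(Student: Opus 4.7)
The plan is to mimic the proof of Lemma \ref{lem.dD2.v0} by applying an interior Caccioppoli inequality to the standard two-scale error, with the distance weight $\delta$ absorbing the blow-up near the boundary so that no boundary condition on $v_\e - v_0$ is needed.

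First I would introduce the two-scale remainder
\begin{equation*}
w_\e := v_\e - v_0 - \e\,\chi^\e \nabla v_0
\end{equation*}
and carry out the classical computation to see which equation $w_\e$ satisfies. Using the flux corrector $B = (b_{ij\ell})$ (a smooth, $1$-periodic, antisymmetric-in-$(i,j)$ tensor with $\partial_i b_{ij\ell} = \tilde{a}_{j\ell} - \hat{a}_{j\ell}$, where $\tilde{a}_{j\ell} = a_{j\ell} + a_{jk}\partial_k \chi_\ell$), the usual manipulation yields
\begin{equation*}
\cL_\e w_\e = \e\,\frac{\partial}{\partial x_i}\Bigl\{ h^\e_{ij\ell}\,\frac{\partial^2 v_0}{\partial x_j \partial x_\ell}\Bigr\},
\end{equation*}
with $h_{ij\ell} = -b_{ji\ell} + a_{ij}\chi_\ell$ bounded $1$-periodic. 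Crucially the right-hand side is in divergence form with an $L^\infty$ oscillating coefficient, and $w_\e$ itself is not assumed to vanish on $\partial\Omega$.

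Next I would apply the interior Caccioppoli inequality on balls $B(x,\delta(x)/4) \subset B(x,\delta(x)/2) \subset \Omega$. For each such $x$ this gives
\begin{equation*}
\int_{B(x,\delta(x)/4)} |\nabla w_\e|^2 \le \frac{C}{\delta(x)^2}\int_{B(x,\delta(x)/2)} |w_\e|^2 + C\e^2 \int_{B(x,\delta(x)/2)} |\nabla^2 v_0|^2 .
\end{equation*}
Multiplying by $\delta(x)^2 \approx \delta(y)^2$ on $B(x,\delta(x)/2)$ and covering $\Omega$ by such balls with finite overlap (the same Besicovitch-type covering used in Lemma \ref{lem.dD2.v0}) produces
\begin{equation*}
\norm{\delta \nabla w_\e}_{L^2(\Omega)}^2 \le C\norm{w_\e}_{L^2(\Omega)}^2 + C\e^2 \norm{\delta \nabla^2 v_0}_{L^2(\Omega)}^2.
\end{equation*}
For the first term I use $\norm{w_\e}_{L^2(\Omega)} \le \norm{v_\e - v_0}_{L^2(\Omega)} + C\e\norm{\nabla v_0}_{L^2(\Omega)}$, and for the second I invoke Lemma \ref{lem.dD2.v0} to bound $\norm{\delta \nabla^2 v_0}_{L^2(\Omega)}$ by $\norm{\nabla v_0}_{L^2(\Omega)} + \norm{\delta \cL_0 v_0}_{L^2(\Omega)}$.

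Finally I would convert back to the quantity in the statement. A direct chain-rule computation gives
\begin{equation*}
\nabla v_\e - \nabla v_0 - (\nabla \chi)^\e \nabla v_0 \;=\; \nabla w_\e - \e\,\chi^\e \nabla^2 v_0,
\end{equation*}
and the $\delta$-weighted $L^2$-norm of the last term is controlled by $C\e\norm{\delta \nabla^2 v_0}_{L^2(\Omega)}$, which is again absorbed via Lemma \ref{lem.dD2.v0}. Combining everything gives (\ref{est.delta.H1}). The main conceptual step here -- and the one emphasized by the author in the introduction -- is the reversal of the usual logic of homogenization: the weighted interior $H^1$ bound is extracted from the $L^2$ bound $\norm{v_\e - v_0}_{L^2}$ via Caccioppoli, entirely bypassing any control of $\nabla v_\e$ on $\partial\Omega$, which is what would normally be obstructed by the oscillating boundary layer data.
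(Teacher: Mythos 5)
Your proof is correct and uses the same key ingredients as the paper: define $w_\e = v_\e - v_0 - \e\chi^\e\nabla v_0$, observe that $\cL_\e w_\e = \e\,\mathrm{div}(h^\e\nabla^2 v_0)$ for a bounded periodic $h$ built from the flux corrector, derive a $\delta$-weighted Caccioppoli-type bound $\norm{\delta\nabla w_\e}_{L^2}^2 \le C\norm{w_\e}_{L^2}^2 + C\e^2\norm{\delta\nabla^2 v_0}_{L^2}^2$, and close with Lemma \ref{lem.dD2.v0}. The only real difference is how that weighted bound is obtained: you localize via the interior Caccioppoli inequality on balls $B(x,\delta(x)/4)$ and sum over a finitely-overlapping cover (i.e.\ you reuse the mechanism of Lemma \ref{lem.dD2.v0}), whereas the paper tests the global equation for $w_\e$ directly against $\delta^2 w_\e \in H^1_0(\Omega;\R^m)$ and does a single integration by parts. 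The two are interchangeable here; the paper's global test-function version avoids having to re-invoke the covering argument, while yours has the small advantage that it never needs $w_\e$ to have enough global regularity to justify $\delta^2 w_\e \in H^1_0(\Omega)$, since everything stays local. One cosmetic slip: the chain rule gives $\nabla v_\e - \nabla v_0 - (\nabla\chi)^\e\nabla v_0 = \nabla w_\e + \e\chi^\e\nabla^2 v_0$ (with a plus sign), not minus, but this does not affect the final bound since the extra term is estimated in absolute value anyway.
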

\begin{proof}
	Define $w_\e = v_\e - v_0 - \e \chi^\e \nabla v_0$. Then a direct computation shows that
	\begin{equation*}
	\cL_\e w_\e = -\txt{div} ((\widehat{A} - A^\e - A^\e \nabla \chi^\e) \nabla v_0) + \txt{div} (\e \chi^\e \nabla^2 v_0).
	\end{equation*}
	By (\ref{eq.Bijk}) and a standard technique (see, e.g., \cite[Chapter 2.1]{ShenBook}), we have
	\begin{equation*}
	\cL_\e w_\e = -\txt{div} (\e B^\e \nabla^2 v_0) + \txt{div} (\e \chi^\e \nabla^2 v_0),
	\end{equation*}
	where $B = (b_{ijk})$ is defined by (\ref{eq.Bijk}). Observe that $\delta^2 w_\e \in H^1_0(\Omega;\R^m)$. Integrating the above equation against $\delta^2 w_\e$, we obtain
	\begin{equation*}
		\int_{\Omega} A^\e \nabla w_\e \cdot \nabla (\delta^2 w_\e) = \e \int_{\Omega} B^\e \nabla^2 v_0 \cdot \nabla(\delta^2 w_\e) - \e \int_{\Omega} \chi^\e\nabla^2 v_0\cdot \nabla(\delta^2 w_\e).
	\end{equation*}
	Using the fact $\norm{\nabla \delta}_{L^\infty(\Omega)} \le 1$ and the ellipticity condition, we have
	\begin{align}
		\begin{aligned}\label{est.Ddwe}
			\norm{\nabla (\delta  w_\e ) }_{L^2(\Omega)}^2 & \le C\int_{\Omega} A^\e \nabla(\delta w_\e) \cdot \nabla (\delta w_\e) \\
			& \le C\int_{\Omega} A^\e \nabla w_\e \cdot \nabla (\delta^2 w_\e) + C\norm{w_\e}_{L^2(\Omega)}^2\\
			& \le C\e \int_{\Omega} B^\e \nabla^2 v_0 \cdot \nabla(\delta^2 w_\e)  \\
			& \qquad - C \e \int_{\Omega} \chi^\e\nabla^2 v_0\cdot \nabla(\delta^2 w_\e) + C\norm{w_\e}_{L^2(\Omega)}^2.
		\end{aligned}
	\end{align}
	Now, it follows from the Cauchy-Schwarz inequality that
	\begin{align*}
		& \bigg|C\e \int_{\Omega} B^\e \nabla^2 v_0 \cdot \nabla(\delta^2 w_\e) \bigg| +  \bigg|C\e \int_{\Omega} \chi^\e \nabla^2 v_0 \cdot \nabla(\delta^2 w_\e) \bigg| \\
		& \quad \le C\e^2 \norm{\delta \nabla^2 v_0}_{L^2(\Omega)}^2 +  C\norm{w_\e}_{L^2(\Omega)}^2 + \frac{1}{2} \norm{\nabla (\delta  w_\e)}_{L^2(\Omega)}^2.
	\end{align*}
	Substituting this into (\ref{est.Ddwe}) and using Lemma \ref{lem.dD2.v0}, we obtain
	\begin{align*}
		\norm{\nabla(\delta w_\e)}_{L^2(\Omega)} & \le C\e \norm{\delta \nabla^2 v_0}_{L^2(\Omega)} + C\norm{w_\e}_{L^2(\Omega)}^2\\
		& \le C\e \big(\norm{\nabla v_0}_{L^2(\Omega)} + \norm{\delta \cL_0 v_0 }_{L^2(\Omega)}\big) + C\norm{v_\e - v_0}_{L^2(\Omega)}.
	\end{align*}
	The desired estimate follows by observing $\nabla(\delta w_\e) = \delta \nabla w_\e + \nabla \delta w_\e$ and that the second term is bounded by the right-hand side of (\ref{est.delta.H1}).
\end{proof}

The above lemma can be applied to find an interior expansion for $\nabla T_\e (\theta \phi_0 + \lambda_0 \Psi^{bl}\phi_0 )$ with an error of $O(\e)$. 
\begin{corollary}\label{cor.ve}
	Let $\phi_0$ be as before and Let $v_\e = T_\e (\theta \phi_0 + \lambda_0 \Psi^{bl}\phi_0 ) $ and $v_0 = T_0 (\theta \phi_0 + \lambda_0 \Psi^{bl}\phi_0 )$. Then
	\begin{equation*}
	\norm{\delta (\nabla v_\e - \nabla v_0 - \nabla\chi^\e \nabla v_0) }_{L^2(\Omega)} \le C\e.
	\end{equation*}
\end{corollary}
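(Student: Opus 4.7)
The plan is to apply Lemma \ref{lem.delta.H1} directly with $v_\e = T_\e f$ and $v_0 = T_0 f$, where $f := \theta \phi_0 + \lambda_0 \Psi^{bl}\phi_0$, and then to verify that each term on the right-hand side of (\ref{est.delta.H1}) is bounded by $C\e$ (or $C$) under the standing assumptions.

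First I would check the hypotheses of Lemma \ref{lem.delta.H1}. By the definition of $T_\e$ and $T_0$, one has $\cL_\e v_\e = f = \cL_0 v_0$ with $v_\e, v_0 \in H^1_0(\Omega;\R^m)$, so it suffices to show that $f \in L^2(\Omega;\R^m)$ with norm bounded by a constant depending only on $\lambda_0$, $A$, and $\Omega$. The first piece $\theta \phi_0$ is smooth and bounded. For the second piece, the formula (\ref{eq_psi1bl}) expresses $\Psi^{bl}\phi_0$ as $\lambda_0^{-1}(\lambda_0^{-1} - T_0)^{-1}(I - S_0)K^{bl}\phi_0$; since $K^{bl}\phi_0 = v_{1,0}^{bl} \in L^2(\Omega;\R^m)$ by Theorem \ref{thm_blL2}, and since $(\lambda_0^{-1} - T_0)^{-1}$ is bounded on $\mathcal{R}(S_0)^\perp$ by Fredholm theory (as noted in the paper right after (\ref{eq_psi1bl})), we obtain $\Psi^{bl}\phi_0 \in L^2(\Omega;\R^m)$ with a uniform bound. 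Thus $\|f\|_{L^2(\Omega)} \le C$.

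Next I would control the three terms on the right-hand side of (\ref{est.delta.H1}). The standard energy estimate for $\cL_0$ gives $\|\nabla v_0\|_{L^2(\Omega)} \le C\|f\|_{L^2(\Omega)} \le C$. Since $\cL_0 v_0 = f$ and $\delta$ is bounded by the diameter of $\Omega$, we also have $\|\delta \cL_0 v_0\|_{L^2(\Omega)} \le C\|f\|_{L^2(\Omega)} \le C$. Finally, the classical $L^2$ convergence rate (\ref{est_L2rate}), $\|T_\e - T_0\|_{L^2 \to L^2} \le C\e$, yields
\begin{equation*}
\|v_\e - v_0\|_{L^2(\Omega)} = \|(T_\e - T_0) f\|_{L^2(\Omega)} \le C\e \|f\|_{L^2(\Omega)} \le C\e.
\end{equation*}

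Inserting these three bounds into (\ref{est.delta.H1}) gives
\begin{equation*}
\|\delta(\nabla v_\e - \nabla v_0 - \nabla\chi^\e \nabla v_0)\|_{L^2(\Omega)} \le C\e\bigl(\|\nabla v_0\|_{L^2(\Omega)} + \|\delta \cL_0 v_0\|_{L^2(\Omega)}\bigr) + C\|v_\e - v_0\|_{L^2(\Omega)} \le C\e,
\end{equation*}
which is the claim. There is no real obstacle here, since the corollary is essentially the specialization of Lemma \ref{lem.delta.H1} to a particular source term; the only point requiring a brief argument is verifying $\Psi^{bl}\phi_0 \in L^2(\Omega;\R^m)$, and this is already furnished by the Fredholm-theoretic well-posedness of $\Psi^{bl}$ established in the discussion following (\ref{eq_psi1bl}).
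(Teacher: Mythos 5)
Your argument is essentially the same as the paper's: apply Lemma~\ref{lem.delta.H1} with $f = \theta\phi_0 + \lambda_0\Psi^{bl}\phi_0$, use $\cL_0 v_0 = f$ and the classical estimate (\ref{est_L2rate}) to bound the three terms on the right, and conclude. You are slightly more explicit than the paper in spelling out why $\|f\|_{L^2(\Omega)} \le C$ via the Fredholm-theoretic well-posedness of $\Psi^{bl}$, but the route and the key ingredients are identical.
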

\begin{proof}
	Note that (\ref{est_L2rate}) implies that 
	\begin{equation}\label{est.ve-v0}
	\norm{v_\e - v_0}_{L^2(\Omega)} \le C\e\norm{\theta \phi_0 + \lambda_0 \Psi^{bl}\phi_0 }_{L^2(\Omega)}
	\end{equation}
	It follows from Lemma \ref{lem.delta.H1}, (\ref{est.ve-v0}) and the fact $\cL_0 v_0 = \theta \phi_0 + \lambda_0 \Psi^{bl}\phi_0$ that
	\begin{equation*}
	\norm{\delta (\nabla v_\e - \nabla v_0 - \nabla\chi^\e \nabla v_0) }_{L^2(\Omega)} \le C\e \norm{\theta \phi_0 + \lambda_0 \Psi^{bl}\phi_0 }_{L^2(\Omega)}.
	\end{equation*}
	The desired estimate follows readily.
\end{proof}

Now, it is sufficient to derive the first-order expansion for the leading term of (\ref{eq.Dphie}), $\nabla T_\e\phi_0$. Let $u_\e = T_\e \phi_0$ and $u_0 = T_0 \phi_0$. Observe that Theorem \ref{thm.ue2nd} gives
\begin{align}
	\begin{aligned}\label{eq.Due.2nd}
		\nabla u_\e & = (\nabla u_0 + \nabla \chi^\e \nabla u_0) + \e (\chi^\e \nabla^2 u_0 + \nabla \varUpsilon^\e \nabla^2 u_0) \\
		& \qquad + \e \nabla v^{bl}_{1,\e} + \e^2 \nabla v^{bl}_{2,\e} + O(\e^2).
	\end{aligned} 
\end{align}
By the energy estimate of (\ref{eq_bl2}), we have $\norm{\nabla v^{bl}_{2,\e}}_{L^2(\Omega)} \le C\e^{-1/2}$ and thus $\norm{\e^2 \nabla v^{bl}_{2,\e}}_{L^2(\Omega)} \le C\e^{3/2}$, which exactly is a higher-order error. Therefore, it suffices to consider the expansion of $\nabla v^{bl}_{1,\e}$, which can be handled by Lemma \ref{lem.delta.H1}.
\begin{corollary}\label{cor.ue}
	Let $u_\e = T_\e\phi_0$ and $u_0 = T_0 \phi_0$. Let $v^{bl}_{1,\e}$ and $v^{bl}_{1,0}$ be defined as (\ref{eq_bl}) and (\ref{eq.vblD}). Then
	\begin{equation*}
	\norm{\delta (\nabla v^{bl}_{1,\e} - \nabla v^{bl}_{1,0} - \nabla\chi^\e \nabla v^{bl}_{1,0}) }_{L^2(\Omega)} \le C\e^{\frac{1}{2}-}.
	\end{equation*}
\end{corollary}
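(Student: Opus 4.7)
The plan is to apply Lemma \ref{lem.delta.H1} directly to the pair $(v^{bl}_{1,\e}, v^{bl}_{1,0})$. First I would verify the structural hypothesis: both $v^{bl}_{1,\e}$ and $v^{bl}_{1,0}$ have the same interior right-hand side $F^* = -\bar c_{ijk}\,\partial_i\partial_j\partial_k u_0$ (compare (\ref{eq_bl}) and (\ref{eq.vblD})), so
\begin{equation*}
\cL_\e v^{bl}_{1,\e} \;=\; F^* \;=\; \cL_0 v^{bl}_{1,0} \quad \text{in } \Omega,
\end{equation*}
which is exactly the compatibility required by Lemma \ref{lem.delta.H1}. The boundary data differ, but Lemma \ref{lem.delta.H1} does not refer to boundary values at all, only to the interior equation, so this is harmless.

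Applying Lemma \ref{lem.delta.H1} with $v_\e = v^{bl}_{1,\e}$ and $v_0 = v^{bl}_{1,0}$ gives
\begin{equation*}
\norm{\delta(\nabla v^{bl}_{1,\e} - \nabla v^{bl}_{1,0} - \nabla\chi^\e \nabla v^{bl}_{1,0})}_{L^2(\Omega)}
\le C\e\bigl(\norm{\nabla v^{bl}_{1,0}}_{L^2(\Omega)} + \norm{\delta\cL_0 v^{bl}_{1,0}}_{L^2(\Omega)}\bigr) + C\norm{v^{bl}_{1,\e} - v^{bl}_{1,0}}_{L^2(\Omega)}.
\end{equation*}
It remains to control each of the three pieces. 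Since $u_0 = T_0\phi_0$ with $\phi_0$ smooth, we have $u_0 \in C^\infty(\overline{\Omega};\R^m)$, so $F^* \in L^\infty(\Omega)$ and in particular $\norm{\delta\cL_0 v^{bl}_{1,0}}_{L^2(\Omega)} = \norm{\delta F^*}_{L^2(\Omega)} \le C$. For the $H^1$ norm of $v^{bl}_{1,0}$, Theorem \ref{thm_blL2} asserts $f^* \in W^{1,p}(\partial\Omega;\R^m)$ for all $p < \infty$, hence $f^* \in H^{1/2}(\partial\Omega;\R^m)$, and the standard energy estimate for the Dirichlet problem (\ref{eq.vblD}) for $\cL_0$ yields $\norm{\nabla v^{bl}_{1,0}}_{L^2(\Omega)} \le C$. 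Finally, Theorem \ref{thm_blL2} gives precisely
\begin{equation*}
\norm{v^{bl}_{1,\e} - v^{bl}_{1,0}}_{L^2(\Omega)} \le C\e^{\frac{1}{2}-}.
\end{equation*}

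Combining these three bounds, the right-hand side becomes $C\e + C\e^{\frac{1}{2}-} = C\e^{\frac{1}{2}-}$, giving the claimed estimate. I expect no real obstacle here, as the statement is designed to package Lemma \ref{lem.delta.H1} with the already-known rate $O(\e^{1/2-})$ from the boundary-layer homogenization. The only place to be a little careful is the a priori $H^1$ bound for $v^{bl}_{1,0}$, which forces one to invoke the regularity of the homogenized boundary data $f^*$ recorded in Theorem \ref{thm_blL2}; this is the single point where the full strength of the recent progress on boundary-layer homogenization enters the argument.
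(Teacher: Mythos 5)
Your proposal is correct and follows essentially the same route as the paper: apply Lemma \ref{lem.delta.H1} to the pair $(v^{bl}_{1,\e}, v^{bl}_{1,0})$ after noting $\cL_\e v^{bl}_{1,\e} = \cL_0 v^{bl}_{1,0} = F^*$, then bound the three resulting terms using smoothness of $u_0 = \lambda_0^{-1}\phi_0$, the $W^{1,p}(\partial\Omega)$ regularity of $f^*$ from Theorem \ref{thm_blL2}, and the $O(\e^{1/2-})$ convergence of $v^{bl}_{1,\e}$ to $v^{bl}_{1,0}$. Your trace-space step ($f^* \in W^{1,2}(\partial\Omega) \subset H^{1/2}(\partial\Omega)$, hence the energy estimate gives $v^{bl}_{1,0} \in H^1(\Omega)$) spells out slightly more explicitly what the paper compresses into $\norm{f^*}_{H^1}$, but the substance is identical.
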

\begin{proof}
	By the definition, one has $\cL_\e v^{bl}_{1,\e} = \cL_0 v^{bl}_{1,0} = F^*$, where
	\begin{equation*}
	F^* = -\bar{c}_{ijk} \frac{\partial^3 u_0}{\partial x_i \partial x_j \partial x_k}.
	\end{equation*}
	Since $\phi_0$ is the eigenfunction of $T_0$ corresponding to $\lambda_0^{-1}$, we know $u_0 = T_0\phi_0 = \lambda_0^{-1} \phi_0$. Thus, $\norm{F^*}_{L^2(\Omega)} \le C$, since $\phi_0$ is smooth in our setting. Also recall that $f^* \in W^{1,p}(\partial\Omega;\R^m)$ for any $p<\infty$, particularly for $p=2$. Now by Lemma \ref{lem.delta.H1}, we have
	\begin{align*}
		& \norm{\delta (\nabla v^{bl}_{1,\e} - \nabla v^{bl}_{1,0} - \nabla\chi^\e \nabla v^{bl}_{1,0}) }_{L^2(\Omega)} \\
		& \quad \le C\e \big(\norm{\nabla v^{bl}_{1,0}}_{L^2(\Omega)} + \norm{\delta \cL_0 v^{bl}_{1,0} }_{L^2(\Omega)}\big) + C\norm{v^{bl}_{1,\e} - v^{bl}_{1,0} }_{L^2(\Omega)}\\
		& \quad \le C\e \big(\norm{F^*}_{L^2(\Omega)} + \norm{f^*}_{H^1(\Omega)}\big) + C \e^{\frac{1}{2}-} \norm{\lambda_0^{-1} \phi_0 }_{W^{3,\infty}(\Omega)} \\
		& \quad\le C\e^{\frac{1}{2}-},
	\end{align*}
	where we have used Theorem \ref{thm_blL2} in the second inequality. This finishes the proof.
\end{proof}

\begin{proof}[Proof of Theorem \ref{thm.dDphie}]
	First note that $T_0 \phi_0 = \lambda_0^{-1} \phi_0$ and (\ref{eq_psiKK}) implies
	\begin{equation*}
	\lambda_0 T_0(\Psi^{bl} \phi_0) = \Psi^{bl}\phi_0 - (I-S_0)K^{bl} \phi_0.
	\end{equation*} 
	By Corollary \ref{cor.ve}, \ref{cor.ue} and (\ref{eq.Due.2nd}), one obtains
	\begin{equation*}
	\begin{aligned}
	& \delta \nabla T_\e (\theta \phi_0 + \lambda_0 \Psi^{bl}\phi_0 ) \\
	& = \delta(I+ \nabla \chi^\e) \nabla T_0 (\theta \phi_0 + \lambda_0 \Psi^{bl}\phi_0 ) + O(\e) \\
	& = \delta(I+ \nabla \chi^\e) (\theta \lambda_0^{-1} \nabla \phi_0 + \nabla \Psi^{bl}\phi_0 - \nabla(I-S_0)K^{bl}\phi_0 ) + O(\e)
	\end{aligned}
	\end{equation*}
	and
	\begin{equation*}
	\begin{aligned}
	\delta \nabla T_\e \phi_0  & = \delta(I+\nabla \chi^\e )\nabla T_0 \phi_0 + \e\delta (\chi^\e I + \nabla \varUpsilon^\e) \nabla^2 T_0 \phi_0 \\
	& \qquad + \e \delta (I + \nabla \chi^\e) \nabla v^{bl}_{1,0} + O(\e^{\frac{3}{2}-}) \\
	& = \delta \lambda_0^{-1} (I+\nabla \chi^\e )\nabla \phi_0 + \e\delta \lambda_0^{-1} (\chi^\e I + \nabla \varUpsilon^\e) \nabla^2 \phi_0 \\
	& \qquad + \e \delta \lambda_0^{-1} (I + \nabla \chi^\e) \nabla K^{bl}\phi_0 + O(\e^{\frac{3}{2}-}),
	\end{aligned}
	\end{equation*}
	where we also use the fact $v^{bl}_{1,0} = K^{bl}(T_0\phi_0) = \lambda_0^{-1} K^{bl} \phi_0$. Combining these together, we obtain
	\begin{equation*}
		\begin{aligned}
			\delta \nabla \phi_\e 
			& =  \delta(I+\nabla \chi^\e )\nabla \phi_0 \\ 
			& \quad + \e\delta \big[ (\chi^\e I + \nabla \varUpsilon^\e) \nabla^2 \phi_0 + (I+ \nabla \chi^\e) (\theta \lambda_0^{-1} \nabla \phi_0 + \nabla \Psi^{bl}\phi_0 + \nabla S_0K^{bl}\phi_0 )\big] \\
			& \quad + O(\e^{3/2}).
		\end{aligned}
	\end{equation*}
	Finally, by the definitions of $\theta$ and $S_0$, we have $\theta = -\lambda_0 \Ag{K^{bl}\phi_0, \phi_0}$ and $S_0 K^{bl}\phi_0 = \Ag{K^{bl}\phi_0, \phi_0} \phi_0$. It turns out that
	\begin{equation*}
	\theta \lambda_0^{-1} \nabla \phi_0 + \nabla S_0K^{bl}\phi_0 = 0.
	\end{equation*}
	This ends the proof as desired.
\end{proof}

\section{Neumann Problem}
In this section, we briefly introduce the Neumann eigenvalue problem, of which the results and proofs are parallel to those of Dirichlet problem, though the set-up will be slightly different due to the Neumann boundary condition. First, we introduce the operator $\widetilde{T}_\e$ defined on $ \dot{L}^2(\Omega; \R^m)$ by $\widetilde{T}_\e f = u_\e$, where $u_\e$ is the solution to
\begin{equation}\label{eq_NLeue}
\left\{
\begin{aligned}
\cL_\e u_\e &= f &\quad & \txt{in } \Omega, \\
\frac{\partial}{\partial \nu_\e} u_\e &= 0 &\quad & \txt{on } \partial\Omega.
\end{aligned}
\right.
\end{equation}
We also define $\widetilde{T}_0$ by $\widetilde{T}_0f = u_0$, where $u_0$ is the solution to
\begin{equation}\label{eq_NL0u0}
\left\{
\begin{aligned}
\cL_0 u_0 &= f &\quad & \txt{in } \Omega, \\
\frac{\partial}{\partial \nu_0} u_0 &= 0 &\quad & \txt{on } \partial\Omega.
\end{aligned}
\right.
\end{equation}
Note that the necessary condition of compatibility is $\int_{\Omega} f = 0$. 
Define
\begin{equation}
\dot{H}_{N,\e}^1 (\Omega; \R^m) = \bigg\{ f\in H^1(\Omega; \R^m): \int_{\Omega} f = 0, \frac{\partial}{\partial \nu_\e} f = 0 \bigg\},
\end{equation}
and
\begin{equation}
\dot{H}_{N,0}^1 (\Omega; \R^m) = \bigg\{ f\in H^1(\Omega; \R^m): \int_{\Omega} f = 0, \frac{\partial}{\partial \nu_0} f = 0 \bigg\}.
\end{equation}
Since $\widetilde{T}_\e$ are bounded linear operators from $\dot{L}^2(\Omega; \R^m)$ to $\dot{H}_{N,\e}^1 (\Omega; \R^m)$ uniformly in $\e>0$, $\{\widetilde{T}_\e: \e >0\}$ are collectively compact in $\dot{L}^2(\Omega; \R^m)$. Also, $\widetilde{T}_0$ is a bounded linear operator from $\dot{L}^2(\Omega; \R^m)$ to $\dot{H}_{N,0}^1 \cap H^2 (\Omega; \R^m)$ and therefore compact on $\dot{L}^2(\Omega; \R^m)$. Moreover, the classical homogenization theory (see, e.g., \cite[Chapter 6.1]{ShenBook}) implies
\begin{equation}\label{est.tTe-tT0}
\norm{\widetilde{T}_\e - \widetilde{T}_0}_{L^2(\Omega)} \le C \e.
\end{equation}

Let $\lambda_{\e, k}$ and $\lambda_{0,k}$ be the $k$th (in an increasing order) Neumann eigenvalue of $\cL_\e$ and $\cL_0$, respectively. Let $\phi_{\e,k}$ be the orthonormal eigenfunction of $\cL_\e$ corresponding to $\lambda_{\e, k}$ and $\phi_{0,k}$ be the orthonormal eigenfunction of $\cL_0$ corresponding to $\lambda_{0,k}$. In other words, one has
\begin{equation}
\cL_\e \phi_{\e,k} = \lambda_{\e, k} \phi_{\e,k}, \quad \phi_{\e,k} \in \dot{H}_{N,\e}^1 (\Omega; \R^m), \quad \norm{\phi_{\e,k}}_{L^2(\Omega)} = 1;
\end{equation}
and
\begin{equation}
\cL_0 \phi_{0,k} = \lambda_{0,k} \phi_{0,k}, \quad \phi_{0,k} \in \dot{H}_{N,0}^1 (\Omega; \R^m), \quad \norm{\phi_{0,k}}_{L^2(\Omega)} = 1.
\end{equation}
Recall that (\ref{est.tTe-tT0}) implies that $|\lambda_{\e,k} - \lambda_{0,k}| \le C_k \e$ for each $k\ge 1$.

In the following context, we let $\lambda_0 = \lambda_{0,L} = \lambda_{0,L+1} = \cdots =\lambda_{0, L+M-1}$ be a Neumann eigenvalue of $\cL_0$ with multiplicity $M\ge 1$. Let $\widetilde{S}_0$ be the spectral projection on the eigenspace of $\cL_0$ corresponding to $\lambda_0$, i.e., for any $f\in L^2(\Omega;\R^m)$,
\begin{equation*}
\widetilde{S}_0 f = \Ag{f,\phi_{0,L+j}}\phi_{0,L+j}.
\end{equation*}
Similarly, we denote by $\widetilde{S}_\e$ the spectral projection on the eigenspace of $\cL_\e$ corresponding to $\{\lambda_{\e,L+j}:0\le j\le M-1\}$, i.e.,
\begin{equation*}
\widetilde{S}_\e f = \Ag{f,\phi_{\e,L+j}}\phi_{\e,L+j}.
\end{equation*}

As Lemma \ref{lem_TeT} in Dirichlet problem, we also have the following lemma which is crucial to study the first-order expansion of Neumann eigenvalues and spectral projections.
\begin{lemma}\label{lem_NTeT}
	Let $\widetilde{K}^{bl}$ be the operator defined by (\ref{def_tK1}). Then,
	\begin{equation*}
	\norm{\widetilde{T}_\e - \widetilde{T}_0 - \e \chi^\e \nabla \widetilde{T}_0 - \e \widetilde{K}^{bl}   \widetilde{T}_0}_{\mathcal{R}(\widetilde{S}_0) \to L^2(\Omega)} \le C\e^{\frac{3}{2}-}.
	\end{equation*}
\end{lemma}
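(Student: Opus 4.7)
The plan is to reduce this to Theorem \ref{thm_NL2NextOrder} in exactly the manner that Lemma \ref{lem_TeT} was reduced to Theorem \ref{thm_L2NextOrder} in the Dirichlet case. Since $\mathcal{R}(\widetilde{S}_0)$ is finite-dimensional of dimension $M$, it suffices to verify the bound on each element $\phi_{0,L+j}$ of an orthonormal basis and then extend by linearity (and Cauchy--Schwarz) to all of $\mathcal{R}(\widetilde{S}_0)$.

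First I would observe that $\mathcal{R}(\widetilde{S}_0)$ embeds continuously into $W^{3,\infty}(\Omega;\R^m)$. Each eigenfunction $\phi_{0,L+j}$ lies in $\dot{H}^1_{N,0}(\Omega;\R^m)$, so in particular it has zero mean, which ensures that $\widetilde{T}_\e \phi_{0,L+j}$ and $\widetilde{T}_0 \phi_{0,L+j}$ are well-defined. By classical elliptic regularity for the Neumann problem in the smooth domain $\Omega$ with smooth coefficients $\widehat{A}$, $\phi_{0,L+j}$ is smooth up to $\partial\Omega$, so that
\begin{equation*}
\norm{\phi_{0,L+j}}_{W^{3,\infty}(\Omega)} \le C,
\end{equation*}
for a constant $C = C(\lambda_0, A, \Omega)$.

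Next I would fix $j$ and set $u_\e = \widetilde{T}_\e \phi_{0,L+j}$ and $u_0 = \widetilde{T}_0 \phi_{0,L+j} = \lambda_0^{-1} \phi_{0,L+j}$; in particular $u_0$ inherits the $W^{3,\infty}$ bound. Applying Theorem \ref{thm_NL2NextOrder} to this pair with data $F = \phi_{0,L+j}$ and $g = 0$, and invoking the definition $\widetilde{K}^{bl} u_0 = \tilde{v}^{bl}_{1,0}$ from (\ref{def_tK1}), I obtain
\begin{equation*}
\Norm{\bigl(\widetilde{T}_\e - \widetilde{T}_0 - \e \chi^\e \nabla \widetilde{T}_0 - \e \widetilde{K}^{bl}\widetilde{T}_0\bigr) \phi_{0,L+j}}_{L^2(\Omega)} \le C\e^{\frac{3}{2}-}.
\end{equation*}
Linearity combined with the orthonormality of the basis then yields the claimed operator-norm estimate on $\mathcal{R}(\widetilde{S}_0)$.

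I do not anticipate any substantive obstacle here: the deep content is already fully absorbed into Theorems \ref{thm_NblL2} and \ref{thm_NL2NextOrder} (specifically, the $O(\e^{1/2-})$ rate for the Neumann boundary layer correction and the $W^{1,p}$ regularity of the homogenized boundary data $g^*$ for any $p<\infty$). This lemma is therefore merely the operator-theoretic repackaging of those results in the form needed for the subsequent Riesz-functional-calculus argument that will establish the Neumann analogues of Theorems \ref{thm_lambda}, \ref{thm_Ee}, and \ref{thm.dDphie}.
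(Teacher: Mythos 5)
Your proposal is correct and coincides exactly with the route the paper takes: the paper itself states only that ``Lemma \ref{lem_NTeT} is a straightforward corollary of Theorem \ref{thm_NL2NextOrder},'' deferring to the argument given for the Dirichlet analogue Lemma \ref{lem_TeT}, and your write-up reproduces that Dirichlet argument verbatim with $\widetilde{T}_\e, \widetilde{T}_0, \widetilde{K}^{bl}$ in place of $T_\e, T_0, K^{bl}$ and Theorem \ref{thm_NL2NextOrder} in place of Theorem \ref{thm_L2NextOrder}. Your added observation that $\phi_{0,L+j}\in\dot{H}^1_{N,0}$ has zero mean (so that the Neumann data are compatible and $\widetilde{T}_\e\phi_{0,L+j}$, $\widetilde{T}_0\phi_{0,L+j}$ are well-defined) is a sensible precaution that the Dirichlet proof did not need to mention.
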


Lemma \ref{lem_NTeT} is a straightforward corollary of Theorem \ref{thm_NL2NextOrder}. With this lemma and the previous settings adapted to Neumann problem, we can mimic the argument of Dirichlet problem and obtain exactly the same results. We state the results below without proofs. 

\begin{theorem}\label{thm_Nlambda}
	Let $A$ and $\Omega$ satisfy the same assumptions as Theorem \ref{thm_lambda}. Let $\lambda_0, \lambda_{\e,L+j}$  $(0\le j\le M-1)$ be the Neumann eigenvalues defined previously. Then there exist a constant $\theta$ independent of $\e$ such that for sufficiently small $\e > 0$
	\begin{equation*}
	|\bar{\lambda}_\e - \lambda_0 - \e \theta| = C\e^{\frac{3}{2}-},
	\end{equation*}
	where $\bar{\lambda}_\e = M^{-1} \sum_{j=0}^{M-1} \lambda_{\e,L+j},\ \theta = -\lambda_0 M^{-1} \Ag{ \widetilde{K}^{bl}\phi_{0,L+j}, \phi_{0,L+j}}$ and $C$ depends only on $\lambda_0,A$ and $\Omega$.
\end{theorem}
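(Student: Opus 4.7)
The proof proposal follows the same four-step template used for Theorem \ref{thm_lambda}, with the operator $T_\e$ replaced by $\widetilde T_\e$, the expansion of Lemma \ref{lem_TeT} replaced by Lemma \ref{lem_NTeT}, and the rate estimate (\ref{est_L2rate}) replaced by (\ref{est.tTe-tT0}).

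First I would introduce $\tilde\mu_{\e,k} = \lambda_{\e,k}^{-1}$ and $\tilde\mu_{0,k}=\lambda_{0,k}^{-1}$, which are the eigenvalues of $\widetilde T_\e$ and $\widetilde T_0$ on $\dot L^2(\Omega;\R^m)$ respectively, sharing the same eigenfunctions. Setting $\tilde\mu_0 = \tilde\mu_{0,L}$ and $\bar{\tilde\mu}_\e = M^{-1}\sum_{j=0}^{M-1}\tilde\mu_{\e,L+j}$, Osborn's theorem together with (\ref{est.tTe-tT0}) yields
\begin{equation*}
\Big|\bar{\tilde\mu}_\e -\tilde\mu_0 - \frac{1}{M} \Ag{(\widetilde T_\e-\widetilde T_0) \phi_{0,L+j},\phi_{0,L+j} } \Big| \le C\e^2.
\end{equation*}
Plugging in Lemma \ref{lem_NTeT} and using $\widetilde T_0\phi_{0,L+j}=\tilde\mu_0\phi_{0,L+j}$ leaves two inner products to analyze: an $\e\tilde\mu_0\Ag{\chi^\e_\ell\partial_\ell\phi_{0,L+j},\phi_{0,L+j}}$ term and an $\e\tilde\mu_0\Ag{\widetilde K^{bl}\phi_{0,L+j},\phi_{0,L+j}}$ term, plus an $O(\e^{3/2-})$ remainder.

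Next I would show that the corrector term is $O(\e)$: since $\chi_\ell$ is smooth, $1$-periodic, and of zero mean, pick a smooth $1$-periodic $B_\ell$ with $-\Delta B_\ell=\chi_\ell$ and write $\chi^\e_\ell=-\e^2\Delta[B_\ell(\cdot/\e)]$. Integrating by parts,
\begin{equation*}
\int_\Omega \chi^\e_\ell\,\partial_\ell\phi_{0,L+j}\,\phi_{0,L+j}\,dx = \e\int_\Omega\nabla B_\ell(x/\e)\cdot\nabla(\partial_\ell\phi_{0,L+j}\,\phi_{0,L+j})\,dx - \e\int_{\partial\Omega}\partial_n B_\ell(x/\e)\,\partial_\ell\phi_{0,L+j}\,\phi_{0,L+j}\,dS.
\end{equation*}
The interior term is bounded by $C\e\,\|\phi_{0,L+j}\|_{H^2(\Omega)}^2$. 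Unlike in the Dirichlet case, the boundary term does not vanish (Neumann eigenfunctions generically have nonzero trace), but it is controlled by $C\e\,\|\nabla B_\ell\|_{L^\infty(\T^d)}\,\|\phi_{0,L+j}\|_{H^{1/2}(\partial\Omega)}\|\partial_\ell\phi_{0,L+j}\|_{H^{1/2}(\partial\Omega)}\le C\e$, using smoothness of $\phi_{0,L+j}$ (which follows from elliptic regularity since $\Omega$ is smooth and $\cL_0$ has smooth coefficients). This is the only point where the argument genuinely differs from the Dirichlet proof, and it is the main obstacle to check carefully.

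Combining the above with the definition of $\tilde\gamma := \tilde\mu_0 M^{-1}\Ag{\widetilde K^{bl}\phi_{0,L+j},\phi_{0,L+j}}$ (which is independent of the choice of orthonormal basis of $\mathcal R(\widetilde S_0)$), we obtain $|\bar{\tilde\mu}_\e-\tilde\mu_0-\e\tilde\gamma|\le C\e^{3/2-}$. Finally, to convert from harmonic-type to arithmetic-type averages, I would reuse verbatim the algebraic identity from the end of the proof of Theorem \ref{thm_lambda}: since $|\lambda_{\e,L+j}-\lambda_0|\le C\e$, one checks that $\bar{\tilde\mu}_\e-\tilde\mu_0 = (\lambda_0-\bar\lambda_\e)/\lambda_0^2+O(\e^2)$, hence $\bar\lambda_\e-\lambda_0 = -\e\lambda_0^2\tilde\gamma + O(\e^{3/2-})$. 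Setting $\theta=-\lambda_0^2\tilde\gamma = -\lambda_0 M^{-1}\Ag{\widetilde K^{bl}\phi_{0,L+j},\phi_{0,L+j}}$ gives the stated result.
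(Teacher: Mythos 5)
Your proposal is correct and takes essentially the approach the paper intends: the paper omits the proof of Theorem \ref{thm_Nlambda}, stating only that one should ``mimic the argument of Dirichlet problem'' with $\widetilde{T}_\e$, $\widetilde{T}_0$, $\widetilde{S}_0$, Lemma \ref{lem_NTeT} and (\ref{est.tTe-tT0}) in place of their Dirichlet counterparts, which is precisely your template. Your treatment of the boundary term is the one genuine place where the two arguments diverge, and you handle it correctly: in the Dirichlet proof the test function $\partial_\ell\phi_{0,L+j}\,\phi_{0,L+j}$ vanishes on $\partial\Omega$ so the boundary integral disappears, whereas in the Neumann case it survives but carries the explicit prefactor $\e$ (since $\e^2\nabla[B_\ell(\cdot/\e)]=\e(\nabla B_\ell)(\cdot/\e)$ is uniformly bounded), and the trace of the smooth eigenfunction is controlled, so the boundary contribution is also $O(\e)$. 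The paper's ``the proofs are almost the same, which will be omitted'' glosses over exactly this point, and your proposal supplies the missing verification cleanly. The remaining steps (Osborn's theorem, discarding the corrector term at $O(\e)$, independence of the inner product from the choice of orthonormal basis of $\mathcal{R}(\widetilde{S}_0)$, and the algebraic conversion from the harmonic-type quantity $\bar{\tilde\mu}_\e$ to the arithmetic average $\bar\lambda_\e$) carry over verbatim, as you say.
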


\begin{theorem}\label{thm.NSe}
	Let $A$ and $\Omega$ satisfy the same assumptions as Theorem \ref{thm_lambda} and let $\widetilde{S}_\e$ and $\widetilde{S}_0$ be the spectral projections defined above. Then,
	\begin{equation}\label{est.Se-RS0N}
	\norm{\widetilde{S}_\e - \widetilde{S}_0 - \e (\chi^\e \nabla + \widetilde{\Psi}^{bl})   \widetilde{S}_0} _{\mathcal{R}(\widetilde{S}_0) \to L^2(\Omega)} \le C\e^{\frac{3}{2}-},
	\end{equation}
	and
	\begin{equation*}
	\norm{\widetilde{S}_\e - \widetilde{S}_0 - \e (\chi^\e \nabla + \widetilde{\Psi}^{bl})   \widetilde{S}_0 - \e \widetilde{S}_0   (\chi^\e \nabla + \widetilde{\Psi}^{bl})^*}_{L^2(\Omega) \to L^2(\Omega)} \le C\e^{\frac{3}{2}-},
	\end{equation*}
	where the bounded linear operator $\widetilde{\Psi}^{bl}:\mathcal{R}(\widetilde{S}_0) \mapsto \mathcal{R}(\widetilde{S}_0)^\perp$ is given by
	\begin{equation}\label{eq.tPsi}
	\widetilde{\Psi}^{bl} g = \lambda_0^{-1} (\lambda_0^{-1}-\widetilde{T}_0)^{-1} (I- \widetilde{S}_0 )\widetilde{K}^{bl} g.
	\end{equation}
\end{theorem}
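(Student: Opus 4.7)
The plan is to follow the Dirichlet proof of Theorem \ref{thm_Ee} essentially line-by-line, replacing $T_\e, T_0, S_\e, S_0, K^{bl}, \Psi^{bl}$ by their Neumann analogues $\widetilde{T}_\e, \widetilde{T}_0, \widetilde{S}_\e, \widetilde{S}_0, \widetilde{K}^{bl}, \widetilde{\Psi}^{bl}$. The entire machinery rests on Lemma \ref{lem_NTeT}, the Neumann counterpart of Lemma \ref{lem_TeT}, which has the same $O(\e^{3/2-})$ expansion, together with (\ref{est.tTe-tT0}) in place of (\ref{est_L2rate}). First I would choose a contour $\Gamma \subset \C$ encircling $\mu_0 = \lambda_0^{-1}$ at positive distance from all other eigenvalues of $\widetilde{T}_0$, and --- thanks to collective compactness of $\{\widetilde{T}_\e\}$ and (\ref{est.tTe-tT0}) --- at uniform positive distance from $\{\mu_{\e,L+j}\}$ for sufficiently small $\e$. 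One then represents $\widetilde{S}_0$ and $\widetilde{S}_\e$ as contour integrals of $(z - \widetilde{T}_0)^{-1}$ and $(z - \widetilde{T}_\e)^{-1}$ over $\Gamma$, both acting on $\dot{L}^2(\Omega;\R^m)$, shrinking $\Gamma$ if necessary so that $0$ lies outside $\Gamma$.

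For the restricted estimate (\ref{est.Se-RS0N}), I would fix $g \in \mathcal{R}(\widetilde{S}_0)$ and apply the second resolvent identity twice to obtain
\begin{equation*}
\widetilde{S}_\e g - \widetilde{S}_0 g = \frac{1}{2\pi i}\int_\Gamma (z-\widetilde{T}_0)^{-1}(\widetilde{T}_\e - \widetilde{T}_0)(z-\widetilde{T}_0)^{-1} g \, dz + O(\e^2),
\end{equation*}
where the remainder is $O(\e^2)$ via (\ref{est.tTe-tT0}). The identity $\widetilde{T}_0 g = \mu_0 g$ collapses the inner resolvent to $g/(z-\mu_0)$, and inserting Lemma \ref{lem_NTeT} reduces the analysis to
\begin{equation*}
\frac{\mu_0}{2\pi i}\int_\Gamma \frac{(z-\widetilde{T}_0)^{-1}}{z-\mu_0}\bigl(\e\chi^\e \nabla g + \e \widetilde{K}^{bl} g\bigr) dz + O(\e^{3/2-}).
\end{equation*}
For the corrector term, I would expand $(z - \widetilde{T}_0)^{-1}h = z^{-1}h + z^{-1}(z-\widetilde{T}_0)^{-1}\widetilde{T}_0 h$ (the Neumann analog of Lemma \ref{lem_zT_f}) and establish a Neumann version of Lemma \ref{lem.T0feg}, namely $\norm{\widetilde{T}_0(f^\e \nabla g - \overline{f^\e \nabla g})}_{L^2(\Omega)} \le C\e$ for zero-mean $f\in C^\sigma(\T^d)$, via the same $-\Delta B = f$ trick tested against the Neumann energy form. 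The residue identity $\frac{1}{2\pi i}\int_\Gamma \mu_0(z-\mu_0)^{-1} z^{-1} dz = 1$ then yields the leading term $\e \chi^\e \nabla g$. The boundary-layer piece defines $\widetilde{\Psi}^{bl} g := \frac{1}{2\pi i}\int_\Gamma \mu_0(z-\mu_0)^{-1}(z-\widetilde{T}_0)^{-1} \widetilde{K}^{bl} g \, dz$; its reduction to (\ref{eq.tPsi}) --- rewriting as $(I - \widetilde{S}_0)\widetilde{K}^{bl} g + \mu_0^{-1} \widetilde{T}_0 \widetilde{\Psi}^{bl} g$ and then inverting $\mu_0 - \widetilde{T}_0$ on $\mathcal{R}(\widetilde{S}_0)^\perp$ via Fredholm theory --- uses only that $\widetilde{T}_0$ commutes with its own resolvent on $\dot{L}^2(\Omega;\R^m)$, which is automatic.

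For the full estimate, I would split $g = g_1 + g_2$ with $g_1 \in \mathcal{R}(\widetilde{S}_0)$ and $g_2 \in \mathcal{R}(\widetilde{S}_0)^\perp$. The $g_1$ piece is controlled by (\ref{est.Se-RS0N}) together with a Neumann analog of Lemma \ref{lem.Psi*}: the corrector adjoint contribution $\widetilde{S}_0 (\chi^\e \nabla)^* g_1$ is $O(\e)$ by moving $\chi^\e$ onto $\nabla \widetilde{S}_0 h$ and integrating by parts using the smoothness of the eigenfunctions provided by the Neumann analog of Proposition \ref{prop_E}, while $\widetilde{S}_0 \widetilde{\Psi}^{bl*} \widetilde{S}_0 = 0$ since $\widetilde{\Psi}^{bl}$ maps $\mathcal{R}(\widetilde{S}_0)$ into $\mathcal{R}(\widetilde{S}_0)^\perp$. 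The $g_2$ piece is handled by the duality argument of Lemma \ref{lem_Eeperp}: pair $\widetilde{S}_\e g_2$ against $h = h_1 + h_2$, use $\widetilde{S}_0 g_2 = 0$ to transfer the expansion onto $h_1 = \widetilde{S}_0 h$ via the already-established (\ref{est.Se-RS0N}), and use $\widetilde{S}_\e^2 = \widetilde{S}_\e$ together with the zero-order Lemma \ref{lem_EeEL2} analog to dispose of the $h_2$ contribution at order $O(\e^2)$.

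The main obstacle I anticipate is purely technical: since $\widetilde{T}_0$ is defined only on $\dot{L}^2(\Omega;\R^m)$, the Neumann version of Lemma \ref{lem.T0feg} requires one to first subtract the mean of $f^\e g$, which is itself $O(\e)$ by the same antiderivative trick applied globally, before running the energy estimate. Once this projection issue is settled, the rest is a faithful transcription of the Dirichlet argument --- consistent with the author's remark that the Neumann proofs are omitted because they are almost the same.
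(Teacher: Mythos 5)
Your proposal is correct and takes essentially the same approach the paper intends: the paper explicitly omits the Neumann proof on the grounds that it is a near-verbatim transcription of the Dirichlet argument for Theorem \ref{thm_Ee}, built on Lemma \ref{lem_NTeT} and (\ref{est.tTe-tT0}) in place of Lemma \ref{lem_TeT} and (\ref{est_L2rate}). Your one genuine refinement --- noting that $\widetilde{T}_0$ acts only on $\dot{L}^2(\Omega;\R^m)$ so that the Neumann analogue of Lemma \ref{lem.T0feg} must first subtract the $O(\e)$ mean of $\chi^\e\nabla g$ before running the energy estimate --- is exactly the technical point one needs to check, and you handle it correctly.
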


\begin{corollary}\label{cor.NeuSim}
	Let $A$ and $\Omega$ satisfy the same assumptions as Theorem \ref{thm_lambda}. Let $\lambda_0 = \lambda_{0,L}$ be a simple Neumann eigenvalue of $\cL_0$ with eigenfunction $\phi_0 = \phi_{0,L}$ and $\lambda_\e = \lambda_{\e,L}$ be the Neumann eigenvalue of $\cL_\e$ with eigenfunction $\phi_\e = \phi_{\e,L}$. Then for $\e>0$ sufficiently small,
	\begin{equation*}
	\norm{\phi_\e - \phi_0 - \e \chi^\e \nabla \phi_0 - \e \widetilde{\Psi}^{bl} \phi_0} _{L^2(\Omega)} \le C \e^{\frac{3}{2}-},
	\end{equation*}
	where the operator $\widetilde{\Psi}^{bl}$ is given by (\ref{eq.tPsi}).
\end{corollary}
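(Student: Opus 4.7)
The proof will mirror the argument of Corollary \ref{coro_SimEigen} for the Dirichlet case, simply replacing every object by its Neumann counterpart: $S_\e \mapsto \widetilde{S}_\e$, $S_0 \mapsto \widetilde{S}_0$, $T_\e \mapsto \widetilde{T}_\e$, $T_0 \mapsto \widetilde{T}_0$, $\Psi^{bl} \mapsto \widetilde{\Psi}^{bl}$, with the ambient space being $\dot{L}^2(\Omega;\R^m)$ instead of $L^2(\Omega;\R^m)$.

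First I would observe that since $\lambda_0$ is a simple Neumann eigenvalue, $\mathcal{R}(\widetilde{S}_0) = \textup{Span}\{\phi_0\}$. By the convergence $|\lambda_{\e,k}-\lambda_{0,k}|\le C_k\e$ recorded just after (\ref{est.tTe-tT0}), for all sufficiently small $\e$ the eigenvalue $\lambda_\e$ is also simple and $\mathcal{R}(\widetilde{S}_\e) = \textup{Span}\{\phi_\e\}$. Applying the Neumann spectral projection estimate (\ref{est.Se-RS0N}) to the normalized vector $\phi_0\in \mathcal{R}(\widetilde{S}_0)$ yields
\begin{equation*}
\norm{\widetilde{S}_\e \phi_0 - \phi_0 - \e \chi^\e \nabla \phi_0 - \e \widetilde{\Psi}^{bl} \phi_0}_{L^2(\Omega)} \le C\e^{\frac{3}{2}-}.
\end{equation*}

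Next, by the triangle inequality, the corollary reduces to the estimate $\norm{\widetilde{S}_\e \phi_0 - \phi_\e}_{L^2(\Omega)} \le C\e^2$. Since $\widetilde{S}_\e$ projects onto $\textup{Span}\{\phi_\e\}$, we have $\widetilde{S}_\e \phi_0 = \Ag{\phi_0,\phi_\e}\phi_\e$, so $\widetilde{S}_\e \phi_0 - \phi_\e = \Ag{\phi_0 - \phi_\e, \phi_\e}\phi_\e$. Using $\norm{\phi_0}_{L^2}=\norm{\phi_\e}_{L^2}=1$, the polarization identity gives $\Ag{\phi_\e-\phi_0,\phi_\e+\phi_0}=0$, and hence
\begin{equation*}
\widetilde{S}_\e \phi_0 - \phi_\e = \tfrac{1}{2}\Ag{\phi_0-\phi_\e,\phi_\e-\phi_0}\phi_\e = -\tfrac{1}{2}\norm{\phi_\e - \phi_0}_{L^2(\Omega)}^2\,\phi_\e.
\end{equation*}
It remains to invoke the zero-order bound $\norm{\phi_\e - \phi_0}_{L^2(\Omega)}\le C\e$, which is the Neumann analogue of Lemma \ref{lem_EeEL2} and follows by an identical Riesz-functional-calculus argument using the second resolvent identity together with (\ref{est.tTe-tT0}) and the uniform boundedness of $(z-\widetilde{T}_\e)^{-1}$ on a fixed contour $\widetilde{\Gamma}\subset \C$ enclosing only $\mu_0 = \lambda_0^{-1}$ among the spectrum of $\widetilde{T}_0$. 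Squaring then gives $\norm{\widetilde{S}_\e \phi_0 - \phi_\e}_{L^2(\Omega)} \le C\e^2$, which is of strictly better order than $\e^{3/2-}$, completing the proof.

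The only mildly nontrivial point is verifying that the Neumann Riesz-calculus framework used implicitly in Theorem \ref{thm.NSe} also delivers the zero-order $O(\e)$ estimate for $\widetilde{S}_\e - \widetilde{S}_0$; but this is immediate from the identical argument of Lemma \ref{lem_EeEL2} once $\widetilde{T}_\e$ is viewed as a compact self-adjoint operator on $\dot{L}^2(\Omega;\R^m)$ and the contour is chosen so as to separate $\mu_0$ from the rest of the spectrum of $\widetilde{T}_0$. No new analytic ingredient beyond what the Neumann section already sets up is required.
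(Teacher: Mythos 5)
Your proof is correct and is exactly the argument the paper has in mind: the paper omits the Neumann proofs, stating they are identical to the Dirichlet ones, and your proof mirrors that of Corollary~\ref{coro_SimEigen} faithfully, including the polarization identity to get $\widetilde{S}_\e\phi_0-\phi_\e = -\tfrac{1}{2}\norm{\phi_\e-\phi_0}_{L^2}^2\,\phi_\e$ and the Riesz-calculus zero-order bound via the second resolvent identity and (\ref{est.tTe-tT0}).
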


\begin{theorem}\label{thm.dDphieN}
	Under the same assumptions as Corollary \ref{cor.NeuSim}, we have the following expansion of $\delta \nabla \phi_\e$ in the sense of $L^2(\Omega;\R^m)$,
	\begin{equation*}
		\begin{aligned}
			\delta \nabla \phi_\e 
			& =  \delta(I+\nabla \chi^\e )\nabla \phi_0 \\ 
			& \quad + \e\delta \big[ (\chi^\e I + \nabla \varUpsilon^\e) \nabla^2 \phi_0 + (I+ \nabla \chi^\e) \nabla \widetilde{\Psi}^{bl}\phi_0 \big]  + O(\e^{\frac{3}{2}-}).
		\end{aligned}
	\end{equation*}	
\end{theorem}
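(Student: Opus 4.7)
The plan is to mirror the proof of Theorem \ref{thm.dDphie} with every object replaced by its Neumann counterpart. First I would start from $\cL_\e \phi_\e = \lambda_\e \phi_\e$ and combine Theorem \ref{thm_Nlambda} with Corollary \ref{cor.NeuSim} to obtain
\begin{equation*}
\cL_\e \phi_\e = \lambda_0 \phi_0 + \e\bigl(\theta \phi_0 + \lambda_0 \widetilde{\Psi}^{bl} \phi_0\bigr) + \e \lambda_0 \chi^\e \nabla \phi_0 + R_\e,
\end{equation*}
with $\norm{R_\e}_{L^2(\Omega)} \le C\e^{3/2-}$. Applying $\widetilde{T}_\e$ (handling the mean-zero constraint in the obvious way) and using the Neumann energy estimate together with a Neumann analogue of Lemma \ref{lem.T0feg}, the contributions $\e\widetilde{T}_\e(\chi^\e \nabla \phi_0)$ and $\widetilde{T}_\e(R_\e)$ are $O(\e)$ and $O(\e^{3/2-})$ in $H^1(\Omega)$, respectively, yielding an analogue of (\ref{eq.Dphie}) that reduces matters to expansions of $\nabla \widetilde{T}_\e \phi_0$ and $\nabla \widetilde{T}_\e(\theta \phi_0 + \lambda_0 \widetilde{\Psi}^{bl}\phi_0)$.

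Next I would verify that the auxiliary tools of Section 5 transfer to the Neumann setting. Lemma \ref{lem.dD2.v0} is an interior estimate based on the Caccioppoli inequality and is blind to the boundary condition, so it extends verbatim. For Lemma \ref{lem.delta.H1}, the test function $\delta^2 w_\e$ vanishes on $\partial \Omega$, so integration by parts picks up no boundary contribution irrespective of whether $w_\e$ satisfies a Dirichlet or a Neumann condition; the divergence-form identity $\cL_\e w_\e = -\txt{div}(\e B^\e \nabla^2 v_0) + \txt{div}(\e \chi^\e \nabla^2 v_0)$ is obtained purely from the cell problems and has nothing to do with the boundary, so the entire proof of that lemma carries over. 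A Neumann version of Theorem \ref{thm.ue2nd}, with $v^{bl}_{1,\e}, v^{bl}_{2,\e}$ replaced by $\tilde{v}^{bl}_{1,\e}, \tilde{v}^{bl}_{2,\e}$, follows from the computation behind \cite[Theorem 9.1]{ShenZhuge16}.

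With these ingredients in place, the Neumann analogues of Corollaries \ref{cor.ve} and \ref{cor.ue} follow along the same lines: the first uses (\ref{est.tTe-tT0}) to bound $\widetilde{T}_\e f - \widetilde{T}_0 f$ in $L^2$, while the second uses Theorem \ref{thm_NblL2} to supply the $O(\e^{1/2-})$ convergence rate for the Neumann boundary layer, together with the $W^{1,p}(\partial\Omega)$ regularity of the homogenized Neumann data $g^*$, which is what Lemma \ref{lem.delta.H1} needs in order to control $\norm{\nabla \tilde{v}^{bl}_{1,0}}_{L^2(\Omega)}$. Plugging the resulting interior expansions for $\delta \nabla \widetilde{T}_\e \phi_0$ and $\delta \nabla \widetilde{T}_\e(\theta \phi_0 + \lambda_0 \widetilde{\Psi}^{bl}\phi_0)$ into the analogue of (\ref{eq.Dphie}), and using the pair of identities $\theta = -\lambda_0 \Ag{\widetilde{K}^{bl}\phi_0, \phi_0}$ and $\widetilde{S}_0 \widetilde{K}^{bl}\phi_0 = \Ag{\widetilde{K}^{bl}\phi_0, \phi_0}\phi_0$ yields the cancellation $\theta \lambda_0^{-1} \nabla \phi_0 + \nabla \widetilde{S}_0 \widetilde{K}^{bl}\phi_0 = 0$, leaving exactly the terms displayed in the statement.

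The main obstacle I anticipate is the extra bookkeeping caused by the more intricate Neumann boundary condition in (\ref{eq_Nbl}), which mixes tangential derivatives with the skew-symmetric coefficients $b_{ijk}$. Since, however, Theorem \ref{thm_NblL2} already absorbs all of that complexity into a clean $L^2$ convergence statement with $W^{1,p}$ homogenized boundary data, nothing genuinely new is required beyond what has been developed earlier in this section.
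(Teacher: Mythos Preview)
Your proposal is correct and matches exactly what the paper does: the paper omits the proof of Theorem \ref{thm.dDphieN} entirely, stating only that one can ``mimic the argument of Dirichlet problem and obtain exactly the same results,'' and your outline is precisely that mimicry. The points you flag---that Lemmas \ref{lem.dD2.v0} and \ref{lem.delta.H1} are boundary-condition agnostic because the test function $\delta^2 w_\e$ vanishes on $\partial\Omega$, that the Neumann analogue of Theorem \ref{thm.ue2nd} is contained in the computation behind \cite[Theorem 9.1]{ShenZhuge16}, and that Theorem \ref{thm_NblL2} supplies both the $O(\e^{1/2-})$ rate and the $W^{1,p}$ regularity of $g^*$ needed for the Neumann version of Corollary \ref{cor.ue}---are exactly the checks one has to make, and they all go through.
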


\subsection*{Acknowledgment} The author would like to thank Professor Zhongwei Shen for bringing the eigenvalue problem to him and helpful discussions.

\bibliographystyle{amsplain}
\bibliography{mybib}
\end{document}